\pgfplotsset{compat=newest}
\theoremstyle{plain}
\newtheorem{rem}[theorem]{Remark}
\newtheorem{assumption}[theorem]{Assumption}
\newcommand{\lk}{Lévy-Khinchin}
\newcommand{\cD}{\mathcal{D}}
\newcommand{\var}{{\rm Var}}
\newcommand{\be}{\begin {equation}}
\newcommand{\ee}{\end  {equation}}
\numberwithin{equation}{section} \allowdisplaybreaks[1]
\definecolor{darkgreen}{rgb}{0,.6,0}
\newcommand{\bee}{\begin {equation*}}
\newcommand{\eee}{\end {equation*}}
\title{Subordinated Gaussian Random Fields}
\author{Andrea Barth \thanks{IANS\textbackslash SimTech, University of Stuttgart 
(\email{andrea.barth@mathematik.uni-stuttgart.de}).},
\and Robin Merkle \thanks{IANS\textbackslash SimTech, University of Stuttgart 
	(\email{robin.merkle@mathematik.uni-stuttgart.de}).}}
\begin{document}
		
	\maketitle
		
	\begin{abstract}
	  Motivated by the subordinated Brownian motion, we define a new class of (in general discontinuous) random fields on higher-dimensional parameter domains: the subordinated Gaussian random field. We investigate the pointwise marginal distribution of the constructed random fields, derive a Lévy-Khinchin-type formula and semi-explicit formulas for the covariance function. Further, we study the pointwise stochastic regularity and validate our theoretical findings in various numerical examples.
	\end{abstract}
	
	\begin{keywords}
Subordinated Gaussian random fields, L\'evy fields, pointwise distribution, pointwise stochastic regularity
	\end{keywords}

	\begin{AMS}
		60G60, 60G51, 60G55, 50G15, 60E05
	\end{AMS}

	
	\section{Introduction}\label{sec:Intro}
In many applications of stochastic modeling, it is meaningful to consider random fields which are discontinous in space (e.g. in fractured porous media modeling). In the situation of a one-dimensional parameter space, like financial modeling, L\'evy processes turned out to be a very powerful class of (in general) discontinuous stochastic processes, combined with useful properties, see for example \cite{LevyProcessesInFinance, LevyProcessesAndStochasticCalculus,LevyProcessesAndInfinitelyDivisibleDistributions}. 

Whereas the extension of $\mathbb{R}$-valued L\'evy processes with one-dimensional parameter space to Hilbert space $\mathcal{H}$-valued Lévy processes is straight forward (see \cite{ApproximationAndSimulation}), the extension of Lévy processes to higher-dimensional \textit{parameter spaces} cannot follow an analogous approach. The reason can be found at the very starting point of the definition of L\'evy processes where time \text{increments} are considered: In fact, the definition of Lévy processes makes explicitly use of the total ordered structure underlying the considered time interval. The absence of such a structure on a higher-dimensional parameter space makes it impossible to simply extend the definition of a standard L\'evy process to higher-dimensional parameter spaces.

Subordinated fields did not receive much attention in the recent literature so far. In some classical papers on generalized random fields, of which \cite{Dobrushin} is an important representative (see also the references therein), subordinated fields are defined in terms of iterated It\^o-integrals. In the recent article, \cite{MakoginaandSpodarevLongTermLimits}, the authors investigate deterministic transformations of Gaussian random fields, so called Gaussian subordinated fields, and study excursion sets. The Rosenblatt distributions and long-range dependence of (subordinated) fields are looked into in \cite{LeonenkoMedinaRosenblatt}. The main contribution of our work in contrast is to prove properties of the (discontinuous) subordinated random fields and of their pointwise distributions, which are important in applications (see for example \cite{ZhangPorousMedia}, \cite{BastianPorousMedia} and  \cite{AStudyOfElliptic}).

We present an approach for an extension of a subclass of Lévy processes to more general parameter spaces: Motivated by the subordinated Brownian motion, we employ a higher-dimensional subordination approach using a Gaussian random field together with Lévy subordinators.

Figure \ref{Fig:SamplesSubordGRF} illustrates the approach with samples of a Gaussian random field (GRF) on $[0,1]^2$ with Matérn-1.5 covariance function and the corresponding subordinated field, where we used Poisson and Gamma processes on $[0,1]$ to subordinate the GRF.

\begin{figure}[ht]
	\centering
	\subfigure{\includegraphics[scale=0.29]{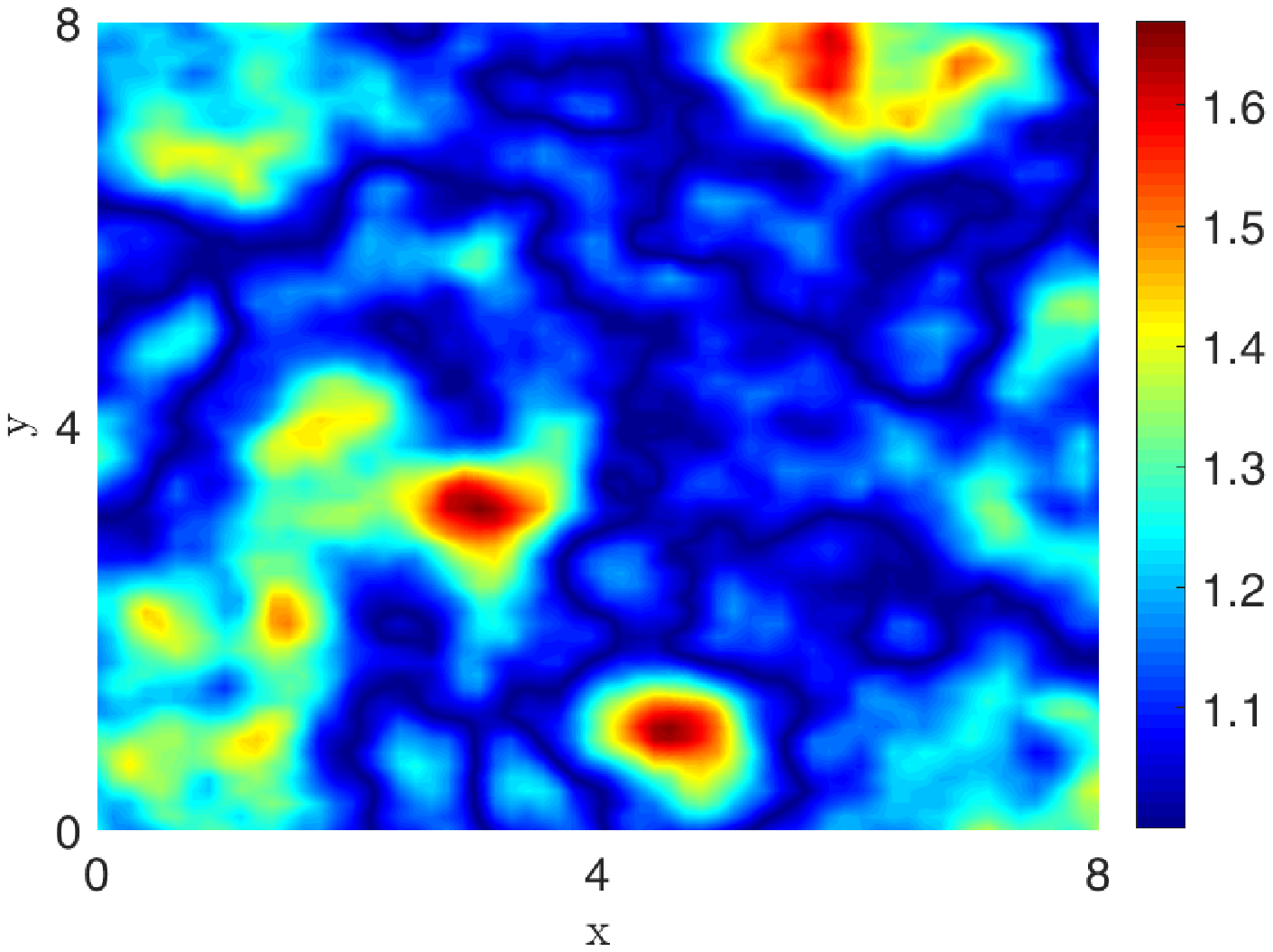}}
	\subfigure{\includegraphics[scale=0.29]{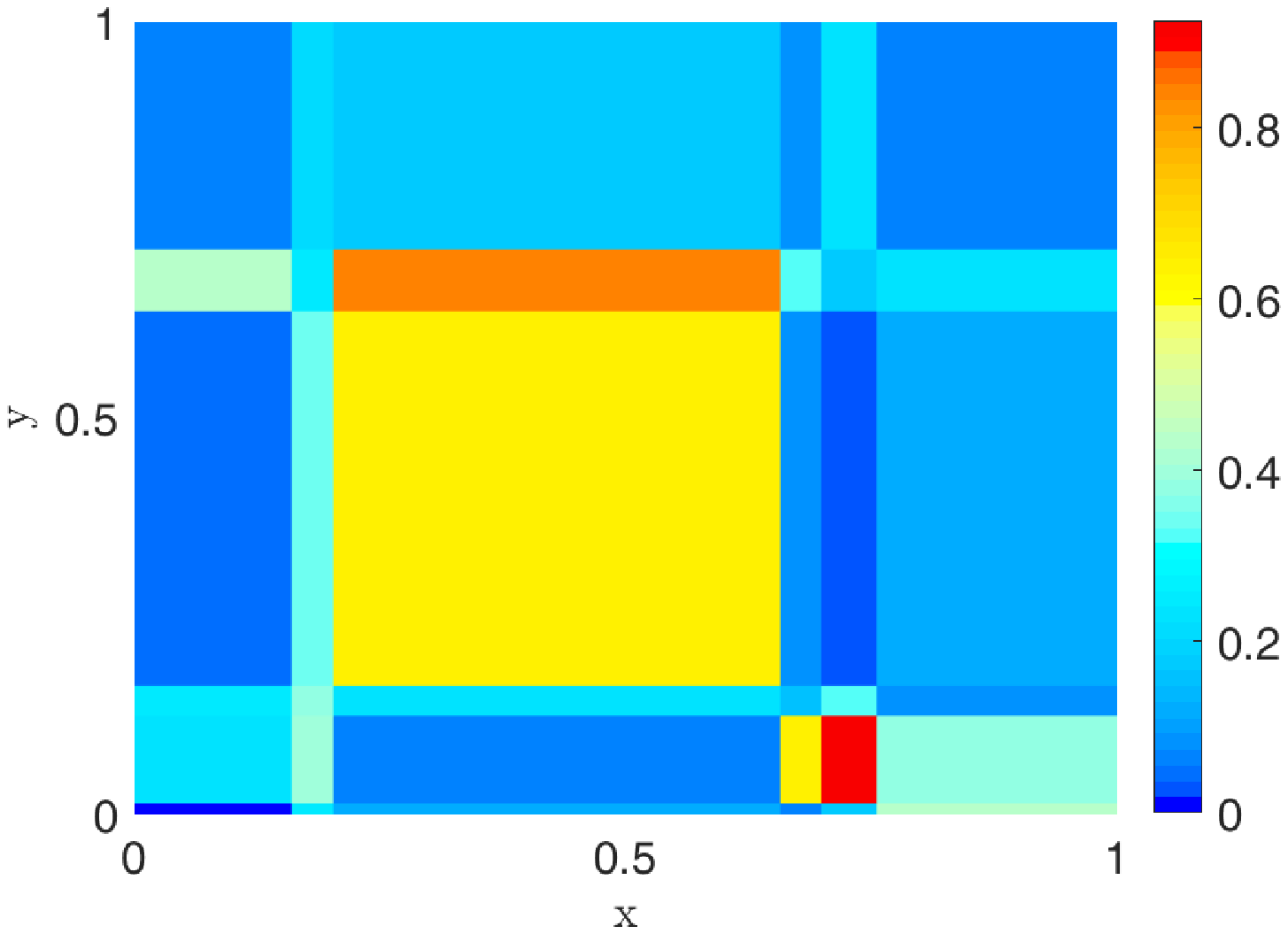}}
	\subfigure{\includegraphics[scale=0.29]{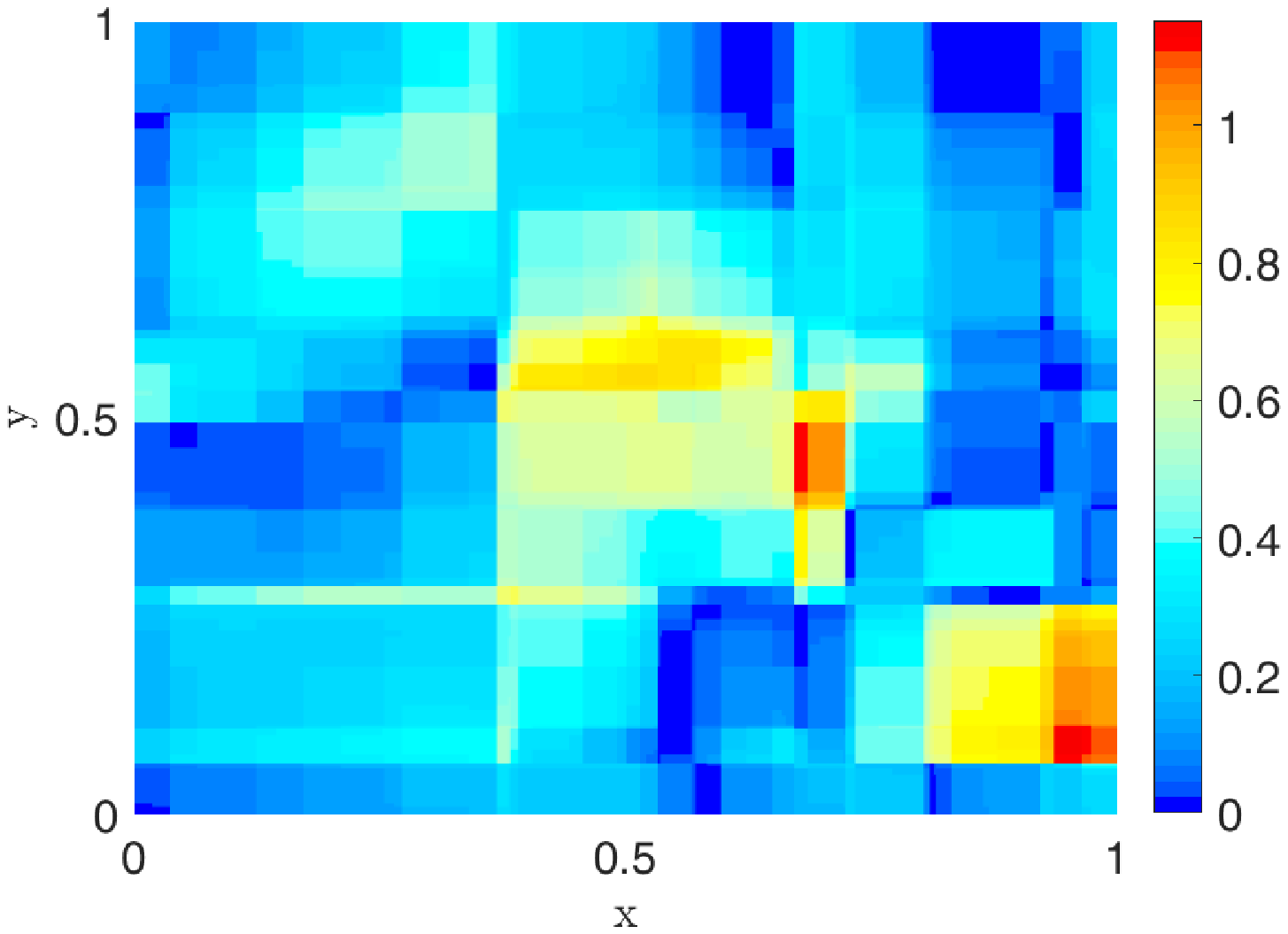}}
	\caption{Sample of Matérn-1.5-GRF (left), Poisson-subordinated GRF (middle) and Gamma-subordinated GRF (right).}\label{Fig:SamplesSubordGRF}
\end{figure}

These examples illustrate how the jumps of the Lévy subordinators produce jumps in the two-dimensional subordinated GRF. The question arises whether it is possible to transfer some theoretical results of one-dimensional Lévy processes to these random fields on higher-dimensional parameter spaces. In particular, a \lk -type formula to access the pointwise distribution of the constructed random field is of great interest (see Section \ref{SEC:PWDistLKFormula}). In Section~\ref{SEC:CovFct} we investigate the covariance structure of the subordinated fields and show how it is influenced by the choice of subordinators. The stochastic regularity of the subordinated fields is then studied in Section~\ref{SEC:StochReg}. There, we derive conditions which ensure the existence of pointwise moments. In the last section we demonstrate how one can numerically show that the findings from Sections~\ref{SEC:PWDistLKFormula}-\ref{SEC:StochReg} hold true for some exemplary fields.
	
	
	\section{Preliminaries}\label{sec:prelim}
In this section we give a short introduction to Lévy processes and Gaussian random fields as basis for the construction of subordinated Gaussian random fields. Throughout the paper, let $(\Omega,\mathcal{F},\mathbb{P})$ be a complete probability space. 

\subsection{Lévy processes}

Let $\mathcal{T}\subseteq \mathbb{R}_+:=[0,+\infty)$ be an arbitrary time domain. A stochastic process $X=(X(t),~t\in \mathcal{T})$ on $\mathcal{T}$ is a family of random variables on the probability space $(\Omega,\mathcal{F},\mathbb{P})$.  
\begin{definition}
A stochastic process $l$ on $\mathcal{T}=[0,+\infty)$ is said to be a \textit{Lévy process} if 
\begin{enumerate}
\item $l(0)=0~\mathbb{P}-a.s.$,
\item $l$ has independent increments, i.e. for each $0\leq t_1\leq t_2\leq \dots \leq t_{n+1}$ the random variables $(l(t_{j+1}) -l(t_j)),~1\leq j\leq n)$ are independent,
\item $l$ has stationary increments, i.e. for each $0\leq t_1\leq t_2\leq \dots \leq t_{n+1}$ it holds $l(t_{j+1}) - l(t_j) \stackrel{\cD}{=} l(t_{j+1}-t_j) - l(0)\stackrel{\cD}{=} l(t_{j+1}-t_j)$, where $\stackrel{\cD}{=}$ denotes equivalence in distribution,
\item $l$ is stochastically continuous, i.e. for all $a>0$ and for all $s\geq 0$ it holds 
\begin{align*}
\underset{t\rightarrow s}{\lim}\, \mathbb{P}(|l(t)-l(s)|>a)=0.
\end{align*}
\end{enumerate}
\end{definition} 		
A very important characterization property of Lévy processes is given by the so called \textit{Lévy-Khinchin formula}. 
\begin{theorem}(Lévy-Khinchin formula, see \cite[Th. 1.3.3]{LevyProcessesAndStochasticCalculus})\label{TH:LevyKhinchinFormula1d}
Let $l$ be a real valued Lévy process on $\mathcal{T}\subset \mathbb{R}_+:=[0,+\infty)$. There exist a drift parameter $b\in \mathbb{R}$, a noise parameter $\sigma_N^2\in \mathbb{R}_+$ and a measure $\nu$ on $(\mathcal{T},\mathcal{B}(\mathcal{T}))$ such that the characteristic function $\phi_{l(t)}$, for $t\in \mathcal{T}$, admits the representation 
\begin{align*}
\phi_{l(t)}(\xi) := \mathbb{E}(\exp(i\xi l(t))) = \exp\Big(t\big(ib\xi- \frac{\sigma_N^2}{2}\xi^2 + \int_{\mathbb{R}\setminus \{0\}} e^{i\xi y} - 1 - i\xi y\mathds{1}_{\{|y|\leq 1\}}(y)\,\nu(dy)\big)\Big),~\xi\in \mathbb{R}.
\end{align*}

\end{theorem}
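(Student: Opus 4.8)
The plan is to pass from the process to its characteristic exponent, build the Lévy measure out of the jumps of a c\`adl\`ag version of $l$, and then assemble the contributions of the large jumps, the compensated small jumps and the continuous part. First I would exploit the increment structure: for $s,t\ge 0$, independence and stationarity of the increments give $\phi_{l(t+s)}(\xi)=\E(e^{i\xi(l(t+s)-l(s))})\,\E(e^{i\xi l(s)})=\phi_{l(t)}(\xi)\,\phi_{l(s)}(\xi)$, so $t\mapsto\phi_{l(t)}(\xi)$ is multiplicative and equals $1$ at $t=0$, and stochastic continuity together with dominated convergence makes it continuous in $t$. Writing $l(t)$ as the sum of the $n$ i.i.d.\ increments over the uniform partition of $[0,t]$ shows that each $l(t)$ is infinitely divisible, whence $|\phi_{l(t)}(\xi)|^{2/n}=|\phi_{l(t/n)}(\xi)|^{2}\to 1$ forces $\phi_{l(t)}(\xi)\neq 0$ for every $t$ and $\xi$; a continuous, non-vanishing, multiplicative function of $t$ that starts at $1$ has the form $\phi_{l(t)}(\xi)=e^{t\eta(\xi)}$ for a unique continuous $\eta\colon\mathbb{R}\to\mathbb{C}$ with $\eta(0)=0$, and it remains only to identify $\eta$.

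Next I would fix a c\`adl\`ag modification of $l$ (which exists for any Lévy process) and, for each Borel set $A\subset\mathbb{R}\setminus\{0\}$ bounded away from $0$, count its jumps via $N(t,A):=\#\{0<s\le t:\Delta l(s)\in A\}$. Independence and stationarity of the increments make $(N(t,A))_{t\ge 0}$ an $\mathbb{N}_{0}$-valued Lévy process with unit jumps, hence a Poisson process, and more precisely $N$ is a Poisson random measure on $\mathbb{R}_{+}\times(\mathbb{R}\setminus\{0\})$ with intensity $dt\otimes\nu$, where $\nu(A):=\E(N(1,A))$ is the measure appearing in the statement. A standard estimate on $|\phi_{l(1)}|$ near the origin (using the continuity and non-vanishing just established) yields $\int_{\mathbb{R}\setminus\{0\}}\min(1,y^{2})\,\nu(dy)<\infty$, which is precisely the integrability that makes the truncated integral in the theorem converge.

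I would then decompose $l$ into the compound Poisson process $L^{(1)}(t):=\int_{0}^{t}\int_{|y|>1}y\,N(ds,dy)$; the compensated small-jump part $L^{(2)}(t):=\lim_{\eps\downarrow 0}\int_{0}^{t}\int_{\eps<|y|\le 1}y\,\big(N(ds,dy)-ds\,\nu(dy)\big)$, where the limit exists in $L^{2}$ uniformly on compact time intervals by the $y^{2}$-integrability and defines an $L^{2}$-martingale Lévy process; and the remainder $C(t):=l(t)-L^{(1)}(t)-L^{(2)}(t)$. Disjointness properties of the Poisson random measure give that $L^{(1)}$, $L^{(2)}$ and $C$ are independent and that $C$ is a Lévy process with continuous paths, so by Lévy's characterization $C(t)=bt+\sigma_{N}W(t)$ for a standard Brownian motion $W$ and constants $b\in\mathbb{R}$, $\sigma_{N}\ge 0$. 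Then $\phi_{C(t)}(\xi)=\exp\!\big(t(ib\xi-\tfrac{\sigma_{N}^{2}}{2}\xi^{2})\big)$, the exponential formula for Poisson random measures gives $\E(e^{i\xi L^{(1)}(t)})=\exp\!\big(t\!\int_{|y|>1}(e^{i\xi y}-1)\,\nu(dy)\big)$ and $\E(e^{i\xi L^{(2)}(t)})=\exp\!\big(t\!\int_{|y|\le 1}(e^{i\xi y}-1-i\xi y)\,\nu(dy)\big)$, and multiplying the three independent factors and merging the two jump integrals produces exactly the asserted representation of $\phi_{l(t)}$.

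The main obstacle is the middle stage: showing that the jumps of $l$ genuinely form a Poisson random measure with a $\sigma$-finite intensity, proving $\int\min(1,y^{2})\,\nu(dy)<\infty$, and verifying that the compensated small-jump integrals converge in $L^{2}$ and that the leftover $C$ is a continuous Lévy process so that Lévy's characterization applies; once these structural facts are in place, the first step and the final multiplication of characteristic functions are routine. An alternative, purely analytic route avoids the path decomposition altogether: apply the Lévy-Khinchin representation of infinitely divisible laws to $l(1)$ --- itself proved via compound-Poisson (``accompanying'') approximations together with a tightness argument for the candidate Lévy measures --- and then bootstrap through $\phi_{l(t)}=e^{t\eta}$, the tightness argument being the corresponding hard step.
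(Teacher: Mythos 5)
The paper offers no proof of this statement: it is quoted (as Theorem 1.3.3) from Applebaum's monograph and used as a black box, so the only meaningful comparison is with the proof in that cited reference. Judged on its own, your sketch is a correct outline of the standard argument via the Lévy--Itô decomposition, and you have correctly isolated where the real work lies: constructing the jump measure as a Poisson random measure with intensity $dt\otimes\nu$, proving $\int\min(1,y^{2})\,\nu(dy)<\infty$, establishing the $L^{2}$-convergence of the compensated small-jump integrals, and showing that the residual continuous part is independent of the jump parts --- this last point is the genuinely delicate step of the decomposition and is usually settled by an exponential-martingale or joint characteristic-function computation rather than by ``disjointness'' of the random measure alone, so it deserves more than the one clause you give it. Your ``alternative, purely analytic route'' is in fact the one taken in the cited reference: Applebaum first proves the Lévy--Khintchine representation for infinitely divisible \emph{laws} by compound-Poisson (accompanying-law) approximation together with a Prokhorov tightness argument for the candidate Lévy measures, applies it to $l(1)$, and extends to all $t$ through the semigroup identity $\phi_{l(t)}=e^{t\eta}$ that you establish in your first step; the pathwise Lévy--Itô decomposition only appears later in that book. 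The analytic route buys independence from path-regularity considerations (no càdlàg modification, no Lévy characterization of continuous Lévy processes), while your route buys the stronger structural statement about the process itself. One small remark on the statement as printed: the Lévy measure $\nu$ lives on $\mathbb{R}\setminus\{0\}$ (equivalently on $(\mathbb{R},\mathcal{B}(\mathbb{R}))$ with $\nu(\{0\})=0$), not on $(\mathcal{T},\mathcal{B}(\mathcal{T}))$ as the paper's wording suggests; your construction places it correctly.
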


It follows from this theorem that every Lévy process is fully characterized by the so called Lévy triplet $(b,\sigma_N^2,\nu)$. Hence, while trying to extend the concept of a Lévy process to a higher-dimensional parameter domain, one important goal is to obtain an analogous characterization property. 

Within the class of Lévy processes there exists a subclass which is given by the so called subordinators: 
A \textit{Lévy subordinator} on $\mathcal{T}\subset \mathbb{R}_+$ is a Lévy process that is non-decreasing $\mathbb{P}$-almost surely. The characteristic function of a Lévy subordinator $l$ admits the form 
\begin{align}\label{EQ:CharFctSubordinator}
\phi_{l(t)}(\xi ) = \mathbb{E}(\exp(i\xi l(t))) = \exp\Big(t\big(i\gamma \xi + \int_0^\infty e^{i\xi y} - 1 \,\nu(dy)\big)\Big),~\xi\in \mathbb{R},
\end{align}
for $t\in \mathcal{T}$ (see \cite[Theorem 1.3.15]{LevyProcessesAndStochasticCalculus}). Here, $\nu$ is the Lévy measure of $l$ and the constant $\gamma$ is a constant which does in general not coincide with the constant $b$ in the Lévy-Khinchin formula. The Lévy measure $\nu$ on $(\mathbb{R},\mathcal{B}(\mathbb{R}))$ of a Lévy subordinator satisfies
\begin{align*}
\nu(-\infty,0)=0 \text{ and } \int_0^\infty	 \min(y,1) \,\nu(dy)<\infty.
\end{align*}In the following, we always mean the triplet $(\gamma,0,\nu)$ corresponding to representation \eqref{EQ:CharFctSubordinator} if we refer to the characteristic triplet of a Lévy subordinator.

\subsection{Gaussian random fields}
Let $\mathscr{D}\subset \mathbb{R}^d$ be a spatial domain. A random field $R=(R(\underline{x}),~\underline{x}\in \mathscr{D})$ is a family of random variables on the probability space $(\Omega,\mathcal{F},\mathbb{P})$. 		
In our approach to extend Lévy processes on higher-dimensional parameter domains, one important component is given by the Gaussian random field.
	
	\begin{definition}(see \cite[Sc. 1.2]{RandomFieldsAndGeometry})
	A random field $W:\mathscr{D}\times \Omega \rightarrow \mathbb{R}$ on a $d$-dimensional domain $\mathscr{D}\subset \mathbb{R}^d$ is said to be a \textit{Gaussian random field (GRF)} if, for any $\underline{x}^{(1)},\dots,\underline{x}^{(n)} \in \mathscr{D}$ with $n\in \mathbb{N}$, the $n$-dimensional random variable $(W(\underline{x}^{(1)}),\dots,W(\underline{x}^{(n)}))$ is multivariate Gaussian distributed. In this case, we define the \textit{mean function} $\mu(\underline{x}):=\mathbb{E}(W(\underline{x}))$, for $\underline{x}\in \mathscr{D}$, as well as the covariance function $q(\underline{x}^{(1)},\underline{x}^{(2)}):=\mathbb{E}((W(\underline{x}^{(1)})-\mu(\underline{x}^{(2)}))(W(\underline{x}^{(1)})-\mu(\underline{x}^{(2)})))$ for $\underline{x}^{(1)},~\underline{x}^{(2)}\in \mathscr{D}$. The GRF $W$ is called centered, if $\mu(\underline{x})=0$ for all $\underline{x}\in \mathscr{D}$. 
	\end{definition}
	Note that every Gaussian random field is determined uniquely by its mean and covariance function. We denote by $Q:\mathcal{L}^2(\mathscr{D})\rightarrow \mathcal{L}^2(\mathscr{D})$ the \textit{covariance operator} of $W$ which is defined by 
\begin{align*}
Q(\psi)(\underline{x})=\int_{\mathscr{D}}q(\underline{x},\underline{y})\psi(\underline{y})d\underline{y} \text{, for } \underline{x}\in \mathscr{D},
\end{align*}
for $\psi \in \mathcal{L}^2(\mathscr{D})$. Here, $\mathcal{L}^2(\mathscr{D})$ denotes the set of all square integrable functions over $\mathscr{D}$. Further, if $\mathscr{D}$ is compact, there exists a decreasing sequence $(\lambda_i,~i\in \mathbb{N})$ of real eigenvalues of $Q$ with corresponding eigenfunctions $(e_i,~i\in\mathbb{N})\subset \mathcal{L}^2(\mathscr{D})$ which form an orthonormal basis of $\mathcal{L}^2(\mathscr{D})$ (see \cite[Section 3.2]{RandomFieldsAndGeometry} and \cite[Theorem VI.3.2 and Chapter II.3]{Funktionalanalysis}). The GRF $W$ is called \textit{stationary} if the mean function $\mu$ is constant and the covariance function $q(\underline{x}^{(1)},\underline{x}^{(2)})$ only depends on the difference $\underline{x}^{(1)}-\underline{x}^{(2)}$ of the values $\underline{x}^{(1)},~\underline{x}^{(2)}\in \mathscr{D}$ (see \cite{RandomFieldsAndGeometry}, p. 102).


	\section{The Subordinated Gaussian random field}	\label{sec:SubordGRF}

Throughout the rest of this paper, let $ d\in \mathbb{N}$ be a natural number with $d\geq 2$ and $T_1,\dots,T_d>0$ be positive values. We define the horizon vector $\mathbb{T}:=(T_1,\dots,T_d)$ and consider the spatial domain $[0,\mathbb{T}]_d:=[0,T_1]\times \dots \times [0,T_d]\subset \mathbb{R}^d$. After a short motivation we define next the subordinated field and show that it is indeed measurable.
	\subsection{Motivation: the subordinated Brownian motion}\label{SUBSEC:SubpordBM}
	In order to motivate the novel subordination approach for the extension of Lévy processes, we shortly repeat the main ideas of the \textit{subordinated Brownian motion} which is defined as a Lévy-time-changed Brownian motion: Let $B=(B(t),~t\in \mathbb{R}_+)$ be a Brownian motion and $l=(l(t),~t\in \mathbb{R}_+)$ be a subordinator. The subordinated Brownian motion is then defined to be the process
\begin{align*}
L(t):=B(l(t)), ~t\in \mathbb{R}_+.
\end{align*}
It follows from \cite[Theorem 1.3.25]{LevyProcessesAndStochasticCalculus} that the process $L$ is again a Lévy process. Note that the class of subordinated Brownian motions is a rich class of processes with great distributional flexibility. For example, the well known Generalized Hyperbolic Lévy process can be represented as a NIG-subordinated Brownian motion (see \cite{ApproximationAndSimulation} and expecially Lemma 4.1 therein). 

\subsection{The definition of the subordinated Gaussian random field}\label{SUBSEC:TheDefinitionOfTheSubordGRF}
Let $W=(W(\underline{x}),~\underline{x}=(x_1,\dots,x_d)\in \mathbb{R}_+^d)$ be a GRF such that $W$ is $\mathcal{F}\otimes \mathcal{B}(\mathbb{R}_+^d)-\mathcal{B}(\mathbb{R})$-measurable. We denote by $\mu:\mathbb{R}_+^d\rightarrow \mathbb{R}$ the mean function and by $q:\mathbb{R}_+^d\times \mathbb{R}_+^d\rightarrow \mathbb{R}$ the covariance function of $W$. Let $l_k=(l_k(x),~x\in [0,T_k])$ be independent Lévy subordinators with triplets $(\gamma_k,0,\nu_k)$, for $k\in\{1, \dots, d\}$, corresponding to representation \eqref{EQ:CharFctSubordinator}. Further, we assume that the Lévy subordinators are stochastically independent of the GRF $W$. We consider the random field
\begin{align*}
L:\Omega \times [0,\mathbb{T}]_d\rightarrow \mathbb{R} \text{ with } L(x_1,\dots,x_d):=W(l_1(x_1),\dots,l_d(x_d)) \text{, for } \underline{x}=(x_1,\dots,x_d)\in [0,\mathbb{T}]_d,
\end{align*}
and call it \textit{subordinated Gaussian random field (subordinated GRF)}.
\begin{rem}
Note that assuming that $W$ has $\mathbb{P}$-almost surely continuous paths is sufficient to ensure that $W$ is a jointly measurable function since $W$ is a Carathéodory function in this case (see \cite[Lemma 4.51]{InfiniteDimensionalAnalysis}). A sufficient condition for the pathwise continuity of GRFs is given, for example, by \cite[Theorem 1.4.1]{RandomFieldsAndGeometry} (see also the discussion in \cite[Section 1.3, p. 13]{RandomFieldsAndGeometry}). A specific example for a class of GRFs with at least continuous samples is given by the Matérn GRFs: for a given smoothness parameter $\nu > \frac{1}{2}$, a correlation parameter $r>0$ and a variance parameter $\sigma^2>0$ the Matérn-$\nu$ covariance function on $\mathbb{R}_+^d\times \mathbb{R}_+^d$ is given by $q_M(\underline{x},\underline{y})=\rho_M(\|\underline{x}-\underline{y}\|_2)$ with 
\begin{align*}
\rho_M(s) = \sigma^2 \frac{2^{1-\nu}}{\Gamma(\nu)}\Big(\frac{2s\sqrt{\nu}}{r}\Big)^\nu K_\nu\Big(\frac{2s\sqrt{\nu}}{r}\Big), \text{ for }s\geq 0,
\end{align*}
where $\Gamma(\cdot)$ is the Gamma function and $K_\nu(\cdot)$ is the modified Bessel function of the second kind (see \cite[Section 2.2 and Proposition 1]{QuasiMonteCarloFEMethodsForEllipticPDEsWithLognormalRandomCoefficients}). Here, $\|\cdot\|_2$ denotes the Euclidean norm on $\mathbb{R}^d$. A Matérn-$\nu$ GRF is a centered GRF with covariance function $q_M$.
\end{rem}

\subsection{Measurabiltiy}
In Subsection \ref{SUBSEC:TheDefinitionOfTheSubordGRF} we introduced the subordinated GRF $L$ as a \textit{random field}. Strictly speaking, we therefore have to verify that point evaluations of the field $L$ are random variables, meaning that we have to ensure measurability of these objects. Note that this is not trivial, since - due to the construction of $L$ - the Lévy subordinators induce an additional $\omega$-dependence in the spatial direction of the GRF $W$. The following lemmas prove measurability of point evaluations of $L$ and spatial measurability of the field, if we consider it as a (stochastically parametrized) space-dependent function.

\begin{lemma}\label{LE:MeasurabilityOfSubordGRF}
Let $L$ be a subordinated GRF on the spatial domain $[0,\mathbb{T}]_d$ as constructed in Subsection \ref{SUBSEC:TheDefinitionOfTheSubordGRF}. For a fixed $\underline{x}\in[0,\mathbb{T}]_d$, the mapping
\begin{align*}
L(\underline{x}):\Omega \rightarrow \mathbb{R}, \quad 
 \omega \mapsto L(\omega,\underline{x})=W(\omega,l_1(\omega,x_1),\dots,l_d(\omega,x_d)),
\end{align*}
is $\mathcal{F}-\mathcal{B}(\mathbb{R})$-measurable.
\end{lemma}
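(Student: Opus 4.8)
The plan is to exhibit $L(\underline{x})$ as a composition of measurable maps and invoke the joint measurability of $W$. First I would recall that each Lévy subordinator $l_k$, being a stochastic process, provides an $\mathcal{F}$-$\mathcal{B}(\mathbb{R})$-measurable map $\omega \mapsto l_k(\omega,x_k)$ for the fixed coordinate $x_k$; bundling these together gives an $\mathcal{F}$-$\mathcal{B}(\mathbb{R}^d)$-measurable map $\Lambda\colon \omega \mapsto (l_1(\omega,x_1),\dots,l_d(\omega,x_d))$, since a map into $\mathbb{R}^d$ is Borel measurable iff each coordinate is. Because the subordinators are non-negative, $\Lambda$ actually takes values in $\mathbb{R}_+^d$, which is the relevant point so that $W$ can be evaluated there.

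Next I would form the map $\Phi\colon \Omega \to \Omega \times \mathbb{R}_+^d$, $\omega \mapsto (\omega, \Lambda(\omega))$. This is $\mathcal{F}$-$(\mathcal{F}\otimes\mathcal{B}(\mathbb{R}_+^d))$-measurable: the first component is the identity on $\Omega$ and the second is $\Lambda$, and measurability into a product $\sigma$-algebra follows coordinatewise. Then $L(\underline{x})$ is precisely the composition $W \circ \Phi$, where $W\colon \Omega\times\mathbb{R}_+^d \to \mathbb{R}$ is $\mathcal{F}\otimes\mathcal{B}(\mathbb{R}_+^d)$-$\mathcal{B}(\mathbb{R})$-measurable by the standing assumption on the GRF. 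A composition of measurable maps is measurable, so $L(\underline{x})\colon \Omega \to \mathbb{R}$ is $\mathcal{F}$-$\mathcal{B}(\mathbb{R})$-measurable, which is the claim.

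The only subtle point — and the one I expect to be the main obstacle — is the step $\omega \mapsto (\omega,\Lambda(\omega))$ together with the use of the \emph{joint} measurability of $W$. One must be careful that it is not enough for $W(\cdot,\underline{z})$ to be measurable in $\omega$ for each fixed $\underline{z}$ and $W(\omega,\cdot)$ measurable in $\underline{z}$ for each fixed $\omega$ (separate measurability); the construction genuinely needs the joint $\mathcal{F}\otimes\mathcal{B}(\mathbb{R}_+^d)$-measurability that was assumed, precisely because the spatial argument $\Lambda(\omega)$ is itself $\omega$-dependent through the subordinators. This is why the remark following the definition of the subordinated GRF emphasises that pathwise continuity of $W$ (making it a Carathéodory function, hence jointly measurable) is a convenient sufficient condition. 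Once joint measurability of $W$ is in hand, the rest is the routine "graph map plus composition" argument sketched above, and I would present it in two or three lines.
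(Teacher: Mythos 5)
Your proposal is correct and follows essentially the same route as the paper's proof: assemble the map $\omega \mapsto (\omega, l_1(\omega,x_1),\dots,l_d(\omega,x_d))$, check its $\mathcal{F}$--$\mathcal{F}\otimes\mathcal{B}(\mathbb{R}_+^d)$-measurability coordinatewise, and compose with the jointly measurable $W$. Your added remark on why joint (rather than separate) measurability of $W$ is indispensable is exactly the point the paper's surrounding discussion also stresses.
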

\begin{proof}
The mapping 
\begin{align*}
W:\Omega\times \mathbb{R}_+^d\rightarrow \mathbb{R},\quad
(\omega,\underline{x})\mapsto W(\omega,\underline{x}),
\end{align*} is $\mathcal{F}\otimes \mathcal{B}(\mathbb{R}_+^d)-\mathcal{B}(\mathbb{R})$-measurable by assumption. Further, since the mapping
\begin{align*}
l_k(x_k):\Omega\rightarrow \mathbb{R}_+,\quad
\omega\mapsto l_k(x_k,\omega),
\end{align*}
 is $\mathcal{F}-\mathcal{B}(\mathbb{R}_+)$-measurable for every $k=1,\dots,d$ and the identity mapping $id_\Omega:\Omega\rightarrow\Omega$ is $\mathcal{F}-\mathcal{F}$-measurable, we obtain by \cite[Lemma 4.49]{InfiniteDimensionalAnalysis} that the mapping
\begin{align*}
\Omega \rightarrow \Omega\times\mathbb{R}_+^d,\quad
\omega \mapsto (\omega,l_1(\omega,x_1),\dots,l_d(\omega,x_d)),
\end{align*}
is $\mathcal{F}-\mathcal{F}\otimes \mathcal{B}(\mathbb{R}_+^d)$-measurable. Hence, the composition mapping $(\omega,\underline{x})\mapsto L(\omega,\underline{x})$ is $\mathcal{F}-\mathcal{B}(\mathbb{R})$-measurable.
\end{proof}

If we further assume continuity of the GRF $W$ in the spatial variable, we obtain mesaurability of the subordinated GRF $L$ in $\underline{x}\in [0,\mathbb{T}]_d$, which allows us to take spatial integrals of the field.
\begin{lemma}
We consider the subordinated GRF $L=W(l_1(\cdot),\dots,l_d(\cdot))$, and assume that the underlying GRF $W$ has $\mathbb{P}$-almost surely continuous paths. For almost all $\omega\in \Omega$, the mapping 
\begin{align*}
L(\omega):[0,\mathbb{T}]_d\rightarrow \mathbb{R},\quad
\underline{x}=(x_1,\dots,x_d)\mapsto W(\omega,l_1(\omega,x_1)\dots,l_d(\omega,x_d)),
\end{align*}
is $\mathcal{B}([0,\mathbb{T}]_d)-\mathcal{B}(\mathbb{R})$-measurable.
\end{lemma}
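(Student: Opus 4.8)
The plan is to use the almost-sure path continuity of $W$ together with the almost-sure c\`adl\`ag (or at least measurable) structure of each L\'evy subordinator $l_k$, and to realise $L(\omega)$ as a composition of measurable maps for $\mathbb{P}$-almost every fixed $\omega$. First I would fix $\omega$ in the intersection of the two full-measure events: the event on which $\underline{x}\mapsto W(\omega,\underline{x})$ is continuous on $\mathbb{R}_+^d$, and the event on which each $x_k\mapsto l_k(\omega,x_k)$ is a non-decreasing real-valued function on $[0,T_k]$ (every L\'evy subordinator has a modification with non-decreasing, c\`adl\`ag paths, and monotone functions are Borel measurable). On this full-measure event I would write $L(\omega)=W(\omega,\cdot)\circ \Phi_\omega$, where $\Phi_\omega:[0,\mathbb{T}]_d\to\mathbb{R}_+^d$ is the map $\underline{x}=(x_1,\dots,x_d)\mapsto(l_1(\omega,x_1),\dots,l_d(\omega,x_d))$.

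The key steps, in order, are: (i) record that for almost every $\omega$ the coordinate functions $x_k\mapsto l_k(\omega,x_k)$ are monotone, hence $\mathcal{B}([0,T_k])$--$\mathcal{B}(\mathbb{R}_+)$-measurable; (ii) conclude that $\Phi_\omega$ is $\mathcal{B}([0,\mathbb{T}]_d)$--$\mathcal{B}(\mathbb{R}_+^d)$-measurable, using that a map into a product space is measurable iff each component is and that the product $\sigma$-algebra on $\mathbb{R}_+^d$ coincides with the Borel $\sigma$-algebra (separability of $\mathbb{R}_+^d$); (iii) observe that on the continuity event $\underline{y}\mapsto W(\omega,\underline{y})$ is continuous, hence $\mathcal{B}(\mathbb{R}_+^d)$--$\mathcal{B}(\mathbb{R})$-measurable; (iv) compose: $L(\omega)=W(\omega,\cdot)\circ\Phi_\omega$ is a composition of Borel-measurable maps and therefore $\mathcal{B}([0,\mathbb{T}]_d)$--$\mathcal{B}(\mathbb{R})$-measurable. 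Since the relevant events have probability one, this holds for almost all $\omega\in\Omega$, which is the claim.

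The main obstacle — really the only non-formal point — is step (i), i.e. justifying that we may work with paths of $l_k$ that are genuinely non-decreasing (and hence Borel measurable as functions of $x_k$). This needs the standard fact that a L\'evy subordinator admits a modification whose paths are almost surely non-decreasing and c\`adl\`ag; one then fixes such a modification from the outset. An alternative route avoiding path regularity of the subordinators would be to invoke joint measurability of $W$ (as in Lemma \ref{LE:MeasurabilityOfSubordGRF}, here upgraded via continuity to a Carath\'eodory property) and apply a Fubini-type argument, but the monotonicity argument is cleaner and self-contained. Everything else is bookkeeping: matching product and Borel $\sigma$-algebras on the finite-dimensional space $\mathbb{R}_+^d$, and composing measurable maps.
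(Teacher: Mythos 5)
Your proof is correct and follows essentially the same route as the paper: it factors $L(\omega)$ as the continuous map $W(\omega,\cdot)$ composed with the componentwise-measurable map $\underline{x}\mapsto(l_1(\omega,x_1),\dots,l_d(\omega,x_d))$ on an intersection of full-measure events, and concludes by composing Borel-measurable maps. The only cosmetic difference is that you justify Borel measurability of each path $x_k\mapsto l_k(\omega,x_k)$ via the a.s.\ monotonicity of the subordinator, whereas the paper invokes the a.s.\ c\`adl\`ag property of general L\'evy paths; both are valid.
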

\begin{proof}
For any $k\in \{1,\dots,d\}$, the Lévy process $l_k$ has $\mathbb{P}$-almost surely cádlág paths and, hence, the mapping 
\begin{align*}
l_k(\omega):[0,T_k]\rightarrow \mathbb{R}_+,\quad
x_k\mapsto l_k(\omega,x_k), 
\end{align*} 
is $\mathcal{B}([0,T_k])-\mathcal{B}(\mathbb{R}_+)$-measurable (see \cite[Chapter 1, Theorem 30]{StochasticIntegrationAndDifferentialEquations}).
Next, we consider domain-extended versions of the processes for a fixed $\omega\in \Omega$: for any $k\in\{1\dots,d\}$ and any $\underline{x}=(x_1,\dots,x_d)\in [0,\mathbb{T}]_d$, we define the function
\begin{align*}
\tilde{l}_k(\omega,\underline{x}):= l_k(\omega,x_k),\text{ for }\underline{x}\in[0,\mathbb{T}]_d.
\end{align*}
Since $\tilde{l}_k(\omega,\cdot)$ is measurable in $x_k$ and constant in the variable $(x_1,\dots,x_{k-1},x_{k+1},\dots,x_d)$, it is $\mathcal{B}([0,\mathbb{T}]_d)-\mathbb{B}(\mathbb{R}_+)$ measurable by \cite[Lemma 4.51]{InfiniteDimensionalAnalysis}. An application of \cite[Lemma 4.49]{InfiniteDimensionalAnalysis} yields the $\mathcal{B}([0,\mathbb{T}_d])-\mathcal{B}(\mathbb{R}_+^d)$-measurability of the mapping
\begin{align*}
[0,\mathbb{T}]_d \rightarrow \mathbb{R}_+^d,\quad
\underline{x}=(x_1,\dots,x_d) &\mapsto (\tilde{l}_1(\omega,\underline{x}),\dots,\tilde{l}_d(\omega,\underline{x}))= (l_1(\omega,x_1),\dots,l_d(\omega,x_d)).
\end{align*}
By assumption, $W(\omega)$ has continuous paths and, hence, it is $\mathcal{B}(\mathbb{R}_+^d)-\mathcal{B}(\mathbb{R})$ measurable. Therefore, the mapping 
\begin{align*}
L(\omega):[0,\mathbb{T}]_d\rightarrow \mathbb{R},\quad
\underline{x}=(x_1,\dots,x_d)\mapsto W(\omega,l_1(\omega,x_1)\dots,l_d(\omega,x_d)),
\end{align*}
is $\mathcal{B}([0,\mathbb{T}_d])-\mathcal{B}(\mathbb{R})$-measurable for $\mathbb{P}$-almost every $\omega \in \Omega$.
\end{proof}


	\section{The pointwise distribution of the subordinated GRF and the \lk-formula}
	\label{SEC:PWDistLKFormula}

	In this section we prove a \lk -type formula for the subordinated GRF in order to have access to the pointwise distribution. This is important, for example,  in view of statistical fitting and other applications (see Section \ref{SEC:NumEx}). In order to be able to do so we need the following technical lemma about the expectation of the composition of independent random variables. The assertion and its proof is a generalization of the corresponding assertion given in the proof of  \cite[Theorem 30.1]{LevyProcessesAndInfinitelyDivisibleDistributions}.
	
	\begin{lemma}\label{LE:ExpValueIndepRVs}
	Let $W:\Omega\times \mathbb{R}_+^d\rightarrow \mathbb{R}$ be a $\mathbb{P}-a.s.$ continuous random field and let $Z:\Omega\rightarrow\mathbb{R}_+^d$ be a $\mathbb{R}_+^d$-valued random variable which is independent of the random field $W$. Further, let $g:\mathbb{R}\rightarrow\mathbb{R}$ be a deterministic, continuous function. It holds
	\begin{align*}
	\mathbb{E}(g(W(Z))=\mathbb{E}(m(Z)),
	\end{align*}
	where $m(z):=\mathbb{E}(g(W(z))$ for deterministic $z\in \mathbb{R}_+^d$.
	\end{lemma}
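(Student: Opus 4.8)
The plan is to reduce the statement to an application of Fubini's theorem on the product space $\Omega \times \Omega$, using the independence of $W$ and $Z$ together with the continuity assumptions to guarantee joint measurability of the integrand. First I would realize the pair $(W, Z)$ on a product probability space: since $W$ and $Z$ are independent, we may assume (or construct) that $W$ is defined on a factor $(\Omega_1, \mathcal{F}_1, \mathbb{P}_1)$ and $Z$ on a factor $(\Omega_2, \mathcal{F}_2, \mathbb{P}_2)$, so that the joint law of $(W, Z)$ is the product measure on $\Omega_1 \times \Omega_2$. Concretely, one writes
\begin{align*}
\mathbb{E}(g(W(Z))) = \int_{\Omega_1 \times \Omega_2} g\big(W(\omega_1, Z(\omega_2))\big)\, (\mathbb{P}_1 \otimes \mathbb{P}_2)(d\omega_1, d\omega_2).
\end{align*}

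Next I would check that the map $(\omega_1, \omega_2) \mapsto g(W(\omega_1, Z(\omega_2)))$ is $\mathcal{F}_1 \otimes \mathcal{F}_2$-measurable, so that Fubini applies. This is where the hypotheses enter: because $W$ is $\mathbb{P}$-a.s. continuous, it is (a modification is) a Carathéodory function and hence $\mathcal{F}_1 \otimes \mathcal{B}(\mathbb{R}_+^d)$-jointly measurable, exactly as invoked in the Remark after the definition of $L$ via \cite[Lemma 4.51]{InfiniteDimensionalAnalysis}; composing with the $\mathcal{F}_2$-measurable map $\omega_2 \mapsto Z(\omega_2)$ and with the continuous (hence Borel) function $g$, and using \cite[Lemma 4.49]{InfiniteDimensionalAnalysis} as in the proof of Lemma~\ref{LE:MeasurabilityOfSubordGRF}, gives the required joint measurability. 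Then Fubini–Tonelli lets me integrate first in $\omega_1$ for fixed $\omega_2$: for $\mathbb{P}_2$-a.e.\ $\omega_2$, writing $z = Z(\omega_2)$,
\begin{align*}
\int_{\Omega_1} g\big(W(\omega_1, z)\big)\, \mathbb{P}_1(d\omega_1) = \mathbb{E}(g(W(z))) = m(z),
\end{align*}
and then integrating in $\omega_2$ yields $\mathbb{E}(m(Z))$, which is the claim. One should also remark that $m$ is measurable (it is the integral of a Carathéodory function, hence measurable in $z$ by the classical parameter-integral theorem), so that $\mathbb{E}(m(Z))$ makes sense.

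\textbf{Main obstacle.} The substantive point is the joint measurability of $(\omega_1,\omega_2)\mapsto g(W(\omega_1,Z(\omega_2)))$ together with the justification for putting $W$ and $Z$ on a genuine product space; without continuity of $W$ one cannot guarantee that $W$ is jointly measurable and the composition with a random argument could fail to be measurable. Everything else — applying Fubini, identifying the inner integral with $m(z)$, and measurability of $m$ — is routine once this is in place. A minor technical nuisance is that the statement does not assume $g(W(Z))$ is integrable; to be safe I would either add an integrability hypothesis or phrase the computation via Tonelli for $|g|$ first (so the identity holds in $[0,\infty]$), and then pass to $g$ itself when the common value is finite, mirroring how \cite[Theorem 30.1]{LevyProcessesAndInfinitelyDivisibleDistributions} handles the characteristic-function case where $g$ is bounded.
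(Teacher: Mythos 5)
Your proof is correct, but it follows a genuinely different route from the paper. The paper mimics the proof of \cite[Theorem 30.1]{LevyProcessesAndInfinitelyDivisibleDistributions} and argues in four elementary steps: first for $Z$ with countable range and bounded $g$ (writing $g(W(Z))=\sum_i g(W(z_i))\mathds{1}_{\{Z=z_i\}}$ and using independence term by term), then for general $Z$ by dyadic-type discretization $Z_n\to Z$ combined with continuity of $W$ and $g$ and dominated convergence, then for unbounded nonnegative $g$ via truncation $\chi_A$ and monotone convergence, and finally for general $g$ via the decomposition $g=g^+-g^-$. You instead invoke the product structure of the joint law of $(W,Z)$ and apply Fubini--Tonelli, with the only substantive point being joint measurability of $(\omega_1,\omega_2)\mapsto g(W(\omega_1,Z(\omega_2)))$, which you correctly trace back to the Carath\'eodory property of the a.s.\ continuous field $W$. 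Your route is shorter and more conceptual, and your handling of integrability (Tonelli for $|g|$ first, then pass to $g$ when finite) is actually more careful than Step 4 of the paper, which writes $\mathbb{E}(g(W(Z)))=\mathbb{E}(g^+(W(Z)))-\mathbb{E}(g^-(W(Z)))$ without excluding $\infty-\infty$. The one point you should phrase more carefully is the reduction to a product space: one cannot literally assume $\Omega=\Omega_1\times\Omega_2$, but rather should push the expectation forward to the image space, viewing $W$ as a random element of $C(\mathbb{R}_+^d)$ (legitimate by a.s.\ continuity, since the Borel $\sigma$-algebra there is generated by the point evaluations) and $Z$ as a random element of $\mathbb{R}_+^d$, so that independence gives the product form of the joint law and the jointly continuous evaluation map $(w,z)\mapsto w(z)$ supplies the measurability needed for Fubini. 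With that rephrasing your argument is complete; what the paper's approach buys in exchange for its length is that it never needs to discuss the law of $W$ as a function-space-valued random element.
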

	
	\begin{proof}
	\textit{Step 1:} Assume that $Z$ takes only a countable number of values in $\mathbb{R}_+^d$ and g is globally bounded.
	In this case there exists a family of vectors $(z_i,~i\in \mathbb{N})\subset \mathbb{R}_+^d$ such that $\sum_{i\in \mathbb{N}}\mathbb{P}(Z=z_i)=1$. We observe that
	\begin{align*}
	\underset{n\rightarrow\infty}{\lim}\sum_{i=1}^ng(W(z_i))\mathds{1}_{\{Z=z_i\}} = g(W(Z)) ~ \mathbb{P}-a.s., 
\end{align*}
	and, by the boundedness of $g$, we obtain for all $n\in \mathbb{N}$ the estimate
	\begin{align*}
	|\sum_{i=1}^ng(W(z_i))\mathds{1}_{\{Z=z_i\}}|\leq C \sum_{i=1}^\infty\mathds{1}_{\{Z=z_i\}}.
	\end{align*}
	Further, by the use of the monotone convergence theorem, we obtain 
	\begin{align*}
	\mathbb{E}(\sum_{i=1}^\infty\mathds{1}_{\{Z=z_i\}})= \sum_{i=1} ^\infty \mathbb{P}(Z=z_i) =1 < \infty.
	\end{align*}
	Therefore, we can apply the dominated convergence theorem together with the independence of $Z$ and $W$ to calculate
	\begin{align*}
	\mathbb{E}(g(W(Z))) &= \sum_{i\in\mathbb{N}}\mathbb{E}(g(W(z_i))\mathds{1}_{\{Z=z_i\}})=\sum_{i\in\mathbb{N}}\mathbb{E}(\mathds{1}_{\{Z=z_i\}})\mathbb{E}(g(W(z_i)))\\ &= \sum_{i\in\mathbb{N}}\mathbb{P}(Z=z_i) \,m(z_i)=\mathbb{E}(m(Z)).
	\end{align*}
	\textit{Step 2:} In this step, we do not impose any assumptions on the range of the random vector $Z$ but we still assume that the function $g$ is globally bounded on $\mathbb{R}$ and therefore also the function $m$ is globally bounded. 
 We write $Z=(Z^{(1)},\dots,Z^{(d)})$ and define 
 \begin{align*}
 Z_n^{(i)}:=\sum_{k=0}^\infty \frac{k}{n}\mathds{1}_{\{\frac{k}{n}\leq Z^{(i)}<\frac{k+1}{n}\}},
 \end{align*}
 for $i=1,\dots,d$. By construction we obtain the pointwise convergence $Z_n^{(i)}\rightarrow Z^{(i)}~\mathbb{P}-a.s.$ for $n\rightarrow \infty$ for every component $i=1,\dots,d$. Further, since $g$ is continuous and $W$ has $\mathbb{P}-a.s.$ continuous paths we obtain
 \begin{align*}
 g(W(Z_n^{(1)},\dots,Z_n^{(d)}))\rightarrow g(W(Z))\text{ and } m(Z_n^{(1)},\dots,Z_n^{(d)})\rightarrow m(Z)~\mathbb{P}-a.s.\text{ for }n\rightarrow \infty.
\end{align*}   
	Here, the continuity of $m$ follows from the boundedness and the continuity of $g$. Using the boundedness of the functions $g$ and $m$ and the dominated convergence theorem together with the first step we obtain
	\begin{align*}
	\mathbb{E}(g(W(Z))) &= \underset{n\rightarrow \infty}{\lim} \mathbb{E}(g(W(Z_n^{(1)},\dots, Z_n^{(d)})))=\underset{n\rightarrow \infty}{\lim} \mathbb{E}(m(Z_n^{(1)},\dots,Z_n^{(d)})) = \mathbb{E}(m(Z)).
	\end{align*}
	\textit{Step 3:} In this step we assume that $g(x)\geq 0$ on $\mathbb{R}$ but $g$ does not necessarily have to be bounded. It follows that $m$ is also non-negative on $\mathbb{R}_+^d$. For a fixed threshold $A>0$, we define the cut function $\chi_A:\mathbb{R}\rightarrow\mathbb{R}$:
	\begin{align*}
	\chi_A(x):=\begin{cases}x&,\text{ if } x\leq A, \\ A &,\text{ if } x>A. \end{cases}
	\end{align*}
	Since $g$ and $m$ are nonnegative we obtain the $\mathbb{P}-a.s.$ monotone convergence of $\chi_A(g(W(Z))))\rightarrow g(W(Z)))$ for $A\rightarrow +\infty$. We define $m_A(z):=\mathbb{E}(\chi_A(g(W(z)))$, for $z\in \mathbb{R}_+^d$, and obtain by the monotone convergence theorem 
	\begin{align*}
	m_A(Z)\rightarrow m(Z) ~\mathbb{P}-a.s. \text{ for } A\rightarrow +\infty. 
\end{align*}	 Using Step 2 and the monotone convergence theorem we obtain:
	\begin{align*}
	\mathbb{E}(g(W(Z))) = \underset{A\rightarrow +\infty}{\lim} \mathbb{E}(\chi_A(g(W(Z)))) = \underset{A\rightarrow +\infty}{\lim} \mathbb{E}(m_A(Z)) = \mathbb{E}(m(Z)).
	\end{align*}
	\textit{Step 4:} Finally, we consider an arbitrary continuous function $g:\mathbb{R}\rightarrow\mathbb{R}$. We write $g^+=\max\{0,g\},~g^-=-\min\{0,g\}$ as well as $\tilde{m}^+(z)=\mathbb{E}(g^+(W(z))),~\tilde{m}^-(z)=\mathbb{E}(g^-(W(z)))$ for $z\in\mathbb{R}_+^d$ and obtain the additive decomposition $g(x)=g^+(x) - g^-(x)$ for $x\in\mathbb{R}$ and $m(z)=\tilde{m}^+(z) - \tilde{m}^-(z)$ for $z\in\mathbb{R}_+^d$ by the additivity of the integral with respect to the integration domain.
	We apply Step 3 to optain
	\begin{align*}
	\mathbb{E}(g(W(Z))) = \mathbb{E}(g^+(W(Z))) - \mathbb{E}(g^-(W(Z))) = \mathbb{E}(\tilde{m}^+(Z)) - \mathbb{E}(\tilde{m}^-(Z)) = \mathbb{E}(m(Z)),
	\end{align*}
	which proves the assertion.

	\end{proof}
	\begin{rem}\label{REM:LemmaAlsoComplexFunctions}
	Note that following Steps 1 and 2 of the proof of Lemma \ref{LE:ExpValueIndepRVs} one obtains that the assertion of the lemma also holds for complex valued, bounded, continuous and deterministic functions $g:\mathbb{R}\rightarrow\mathbb{C}$.
\end{rem} 
An application of Lemma \ref{LE:ExpValueIndepRVs} gives the following semi-explicit formula for the characteristic function of a subordinated GRF.
\begin{corollary}\label{COR:GeneralFormulaCharFctSubordGRF}
Let $W$ be a $\mathbb{P}-a.s.$ continuous GRF on $\mathbb{R}_+^d$ with mean function $\mu:\mathbb{R}_+^d\rightarrow \mathbb{R}$ and covariance function $q:\mathbb{R}_+^d\times \mathbb{R}_+^d\rightarrow\mathbb{R}$. Further, let $l_k=(l_k(t),~t\in [0,T_k])$, for $k=1,\dots,d$, be independent Lévy subordinators which are independent of $W$. The characteristic function of the subordinated GRF defined by $L(\underline{x}):=W(l_1(x_1),\dots,l_d(x_d))$, for $\underline{x}=(x_1,\dots,x_d)\in [0,\mathbb{T}]_d$, admits the formula
\begin{align*}
\mathbb{E}(\exp(i\xi L(\underline{x})))&=\mathbb{E}(\exp(i\xi W(l_1(x_1),\dots,l_d(x_d)))) \\
&= \mathbb{E}(\exp(i\mu(l_1(x_1),\dots,l_d(x_d)) - \frac{1}{2}\xi^2 \sigma_W^2(l_1(x_1),\dots,l_d(x_d)))),
\end{align*}
for $\xi \in \mathbb{R}$ and any fixed point $\underline{x}=(x_1,\dots,x_d)\in [0,\mathbb{T}]_d$. Here, the variance function $\sigma_W^2:\mathbb{R}_+^d\rightarrow \mathbb{R}_+$ is given by $\sigma_W^2(\underline{x}):=q(\underline{x},\underline{x})$ for $\underline{x}\in \mathbb{R}_+^d$.
\end{corollary}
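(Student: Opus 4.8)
The plan is to apply Lemma~\ref{LE:ExpValueIndepRVs}, or rather its complex-valued variant recorded in Remark~\ref{REM:LemmaAlsoComplexFunctions}, with the random vector $Z := (l_1(x_1),\dots,l_d(x_d))$ and the function $g(y) := \exp(i\xi y)$. First I would check the hypotheses. The vector $Z$ takes values in $\mathbb{R}_+^d$ because each $l_k$ is a subordinator and hence nonnegative; moreover $Z$ is a measurable function of the family $(l_1,\dots,l_d)$, which is assumed independent of $W$, so $Z$ is independent of $W$. Finally, $g:\mathbb{R}\to\mathbb{C}$ is bounded and continuous, so Remark~\ref{REM:LemmaAlsoComplexFunctions} is applicable, and $W$ is $\mathbb{P}$-a.s.\ continuous by assumption.

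With this choice we have $g(W(Z)) = \exp(i\xi W(l_1(x_1),\dots,l_d(x_d))) = \exp(i\xi L(\underline{x}))$, and the auxiliary function $m$ from the lemma is $m(z) = \mathbb{E}(\exp(i\xi W(z)))$, i.e.\ the characteristic function of the real-valued random variable $W(z)$ evaluated at $\xi$. The second step is to make $m$ explicit: for each fixed $z\in\mathbb{R}_+^d$ the random variable $W(z)$ is one-dimensional Gaussian with mean $\mu(z)$ and variance $\sigma_W^2(z) = q(z,z)$, so the classical formula for the characteristic function of a normal law gives $m(z) = \exp\big(i\xi\mu(z) - \tfrac12\xi^2\sigma_W^2(z)\big)$.

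Combining the two steps, Lemma~\ref{LE:ExpValueIndepRVs} (in the form of Remark~\ref{REM:LemmaAlsoComplexFunctions}) yields
\begin{align*}
\mathbb{E}(\exp(i\xi L(\underline{x}))) &= \mathbb{E}(g(W(Z))) = \mathbb{E}(m(Z)) \\
&= \mathbb{E}\big(\exp\big(i\xi\mu(l_1(x_1),\dots,l_d(x_d)) - \tfrac12\xi^2\sigma_W^2(l_1(x_1),\dots,l_d(x_d))\big)\big),
\end{align*}
which is the asserted identity.

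I do not expect a genuine obstacle here. The only two points requiring a word of care are, first, that one must invoke the complex-valued version of the lemma (Remark~\ref{REM:LemmaAlsoComplexFunctions}) rather than Lemma~\ref{LE:ExpValueIndepRVs} verbatim, since $g(y)=e^{i\xi y}$ is complex-valued; and second, that the independence used is the independence of the \emph{whole} vector $Z=(l_1(x_1),\dots,l_d(x_d))$ from $W$, which follows because $Z$ is a deterministic measurable function of the family $(l_1,\dots,l_d)$ and that family is, as a whole, independent of $W$. Everything else is the textbook Gaussian characteristic function together with the already-established Lemma~\ref{LE:ExpValueIndepRVs}.
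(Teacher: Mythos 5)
Your proposal is correct and follows essentially the same route as the paper's own proof: apply Lemma~\ref{LE:ExpValueIndepRVs} via Remark~\ref{REM:LemmaAlsoComplexFunctions} with $g(y)=e^{i\xi y}$ and $Z=(l_1(x_1),\dots,l_d(x_d))$, and identify $m(z)$ with the Gaussian characteristic function $\exp(i\xi\mu(z)-\tfrac12\xi^2\sigma_W^2(z))$. Your write-up is in fact slightly more careful than the paper's, since you explicitly verify the independence of $Z$ from $W$ and the boundedness and continuity of $g$.
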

\begin{proof}
The GRF $W$ is pointwise normally distributed with parameters specified by the functions $\mu$ and $\sigma_W^2$. More precise, for a fixed point $\underline{x}\in \mathbb{R}_+^d$, the characteristic function of $W$ admits the form
\begin{align*}
\mathbb{E}(\exp(i\xi W(\underline{x})))=\exp(i\mu(\underline{x})\xi - \frac{1}{2}\sigma_W^2(\underline{x}) \xi^2),
\end{align*}
for $\xi \in \mathbb{R}$ (see \cite[Satz 15.12]{WTheorie}). The assertion then follows by an application of Lemma \ref{LE:ExpValueIndepRVs} together with Remark \ref{REM:LemmaAlsoComplexFunctions}	.
\end{proof}

In the one-dimensional case, the \lk \space formula gives an explixit representation of the characteristic function of a Lévy process. This representation also applies to the subordinated Brownian motion, since it is itself a Lévy process (see Subsection \ref{SUBSEC:SubpordBM}). Note that in the construction of the subordinated Brownian motion one cannot replace the Brownian motion by a  general one-parameter GRF on $\mathbb{R}_+$ without losing the validity of the \lk \space formula. Hence, in the case the subordinated GRF on a higher-dimensional parameter space, it is natural that we have to restrict the class of admissible GRFs in order to obtain a \lk-type formula which is the $d$-dimensional analogon of Theorem \ref{TH:LevyKhinchinFormula1d}. We recap that the characteristic function of a standard Brownian motion $B$ is given by
\begin{align*}
\phi_{B(t)}(\xi)=\mathbb{E}(\exp(i\xi B(t)))=\exp(-\frac{1}{2}t\xi^2) \text{, for }\xi \in \mathbb{R},
\end{align*}
for $t\geq 0$. Note that the Brownian motion ist not characterized by this property, i.e. not every zero-mean GRF on $\mathbb{R}_+$ with the above characteristic function is a Brownian motion, since this specific characteristic function can be attained by different covariance functions, whereas the covariance function of the Browian motion is given uniquely by $q_{BM}(s,t)=Cov(B(s),B(t))=\min\{s,t\}$ for $s,t\geq 0$ (see for example \cite[Section 3.2.2]{LevyProcessesInFinance}). Motivated by this, we impose the following assumptions on the GRF on $\mathbb{R}_+^d$.

\begin{assumption}\label{ASS:GRFCharFct}
Let $W=(W(\underline{x}),~\underline{x}\in \mathbb{R}_+^d)$ be a zero-mean continuous GRF. We assume that there exists a constant $\sigma>0$ such that the characteristic function of $W$ is given by
\begin{align*}
\phi_{W(\underline{x})}(\xi)=\mathbb{E}(\exp(i\xi W(\underline{x}))) = \exp(-\frac{1}{2}\sigma^2\xi^2(x_1+\dots+x_d)),
\end{align*}
for $\xi \in \mathbb{R}$ and every $\underline{x}=(x_1,\dots,x_d)\in \mathbb{R}_+^d$.
\end{assumption}

\begin{rem}\label{REM:StatFieldsExtensionLKFormula}
Note that for a zero-mean, continuous and \textit{stationary} GRF $\tilde{W}=(\tilde{W}(\underline{x}),~\underline{x}\in \mathbb{R}_+^d)$, the GRF $W$ defined by
\begin{align*}
W(\underline{x}):=\sqrt{x_1+\dots+x_d}\tilde{W}(\underline{x}),
\end{align*}
for $\underline{x}=(x_1,\dots,x_d)\in \mathbb{R}_+^d$ satisfies Assumption \ref{ASS:GRFCharFct}.
\end{rem}

We are now able to derive the \lk \space formula for the subordinated GRF.

\begin{theorem}[\lk \space formula]\label{TH:LevyKhinchinFormula}
Let Assumption \ref{ASS:GRFCharFct} hold. We assume independent Lévy subordinators $l_k=(l_k(x),~x\in [0,T_k])$, with Lévy triplets $(\gamma_k,0,\nu_k)$, for $k=1,\dots,d$, are given corresponding to representation \eqref{EQ:CharFctSubordinator}. Further, we assume that these processes are independent of the GRF $W$. We consider the subordinated GRF defined by $L:\Omega \times [0,\mathbb{T}]_d\rightarrow \mathbb{R} \text{ with } L(\underline{x}):=W(l_1(x_1),\dots,l_d(x_d)) \text{ for } \underline{x}=(x_1,\dots,x_d)\in [0,\mathbb{T}]_d$.
The characteristic function of the random field $L$ is then given by
\begin{align*}
\phi_{L(\underline{x})}&(\xi)=\mathbb{E}(\exp(i\xi W(l_1(x_1),\dots, l_d(x_d)))\\
&= \exp\Big(-(x_1,\dots,x_d) \cdot \big(\frac{\sigma^2\xi^2}{2}(\gamma_1,\dots, \gamma_d)^t + \int_{\mathbb{R}\setminus \{0\}} 1 - e^{i\xi z} + i\xi z \mathds{1}_{\{|z|\leq 1\}}(z)\nu_{ext}(dz)\big)\Big), ~\xi\in\mathbb{R},
\end{align*}
for $\underline{x}=(x_1,\dots,x_d)\in[0,\mathbb{T}]_d$. Here, the jump \glqq measure\grqq \space $\nu_{ext}$ is defined through
\begin{align*}
\nu_{ext}([a,b]):=\left(\begin{array}{c} \nu_1^\#([a,b]) \\ \vdots \\  \nu_d^\#([a,b]) \end{array}\right),
\end{align*}
for $a,b\in \mathbb{R}$ where the measure $\nu_k^\#$, for $k =1,\dots,d$ and $a,b\in \mathbb{R}$, is given by
\begin{align*}
\nu_k^\#([a,b]):=\int_0^\infty \int_a^b\frac{1}{\sqrt{2\pi\sigma^2t}}\exp\left(-\frac{x^2}{2\sigma^2t}\right)dx\,\nu_k(dt).
\end{align*}

\end{theorem}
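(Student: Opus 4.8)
The plan is to start from the semi-explicit formula for the characteristic function of $L(\underline{x})$ provided by Corollary \ref{COR:GeneralFormulaCharFctSubordGRF}. Under Assumption \ref{ASS:GRFCharFct} the mean function vanishes and $\sigma_W^2(\underline{y}) = \sigma^2(y_1 + \dots + y_d)$, so that the corollary gives
\begin{align*}
\phi_{L(\underline{x})}(\xi) = \mathbb{E}\Big(\exp\big(-\tfrac{1}{2}\xi^2\sigma^2(l_1(x_1) + \dots + l_d(x_d))\big)\Big).
\end{align*}
Since the subordinators $l_1, \dots, l_d$ are mutually independent, this expectation factorizes as $\prod_{k=1}^d \mathbb{E}(\exp(-\tfrac{1}{2}\xi^2\sigma^2 l_k(x_k)))$. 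Each factor is the Laplace transform of $l_k(x_k)$ evaluated at $\tfrac{1}{2}\xi^2\sigma^2$, or equivalently the characteristic function of $l_k(x_k)$ evaluated at the purely imaginary argument $\tfrac{i}{2}\xi^2\sigma^2$. The first key step is therefore to justify this analytic continuation: formula \eqref{EQ:CharFctSubordinator} holds a priori for real arguments, but because $l_k(x_k) \geq 0$ and $\int_0^\infty \min(y,1)\,\nu_k(dy) < \infty$, the Laplace exponent extends holomorphically to the right half-plane (including the boundary value we need), yielding
\begin{align*}
\mathbb{E}\big(\exp(-u\, l_k(x_k))\big) = \exp\Big(-x_k\big(\gamma_k u + \int_0^\infty (1 - e^{-uy})\,\nu_k(dy)\big)\Big), \qquad u \geq 0.
\end{align*}

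Next I would substitute $u = \tfrac{1}{2}\xi^2\sigma^2$ and rewrite the $y$-integral by introducing the Gaussian density: since $e^{-uy} = \int_{\mathbb{R}} \frac{1}{\sqrt{2\pi\sigma^2 y}}\exp(-\tfrac{z^2}{2\sigma^2 y})e^{i\xi z}\,dz$ for $y > 0$ (this is exactly the characteristic function identity $e^{-\frac{1}{2}\xi^2\sigma^2 y} = \mathbb{E}(e^{i\xi \mathcal{N}(0,\sigma^2 y)})$ read backwards), we can replace $1 - e^{-uy}$ by $\int_{\mathbb{R}}(1 - e^{i\xi z})\frac{1}{\sqrt{2\pi\sigma^2 y}}\exp(-\tfrac{z^2}{2\sigma^2 y})\,dz$ and then interchange the order of integration in $y$ and $z$ via Fubini (justified since the integrand $1 - e^{-uy} \geq 0$ is $\nu_k$-integrable). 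This produces precisely the measure $\nu_k^\#$ from the statement, and the Gaussian term $\gamma_k u = \frac{1}{2}\xi^2\sigma^2\gamma_k$ is the drift contribution. The third step is a bookkeeping one: to match the stated form with the compensator $i\xi z\mathds{1}_{\{|z|\le 1\}}$, I would note that $\nu_k^\#$ is symmetric in $z$ (the Gaussian density is even), so $\int (i\xi z)\mathds{1}_{\{|z|\le 1\}}\,\nu_k^\#(dz) = 0$ and the term may be inserted for free; this also makes the formula structurally parallel to Theorem \ref{TH:LevyKhinchinFormula1d}. Assembling the $d$ factors and writing the result as the inner product $-(x_1,\dots,x_d)\cdot(\cdots)$ with the vector-valued $\nu_{ext}$ gives the claimed expression.

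The main obstacle I anticipate is the rigorous justification of the Gaussian-density substitution together with the Fubini interchange, and in particular checking that $\nu_k^\#$ is a well-defined (in general infinite, $\sigma$-finite) measure on $\mathbb{R}\setminus\{0\}$ for which all the integrals appearing converge — i.e. that it is a genuine Lévy-type jump measure. One should verify $\int_{\mathbb{R}\setminus\{0\}} \min(z^2,1)\,\nu_k^\#(dz) < \infty$: near $z = 0$ the mass comes from small $t$, where the Gaussian concentrates, and the bound $\int_0^\infty \min(t,1)\,\nu_k(dt) < \infty$ must be leveraged; for large $|z|$ the Gaussian tails provide decay. A secondary technical point is that the whole computation rests on Corollary \ref{COR:GeneralFormulaCharFctSubordGRF}, which requires $\mathbb{P}$-a.s. continuity of $W$ — this is part of Assumption \ref{ASS:GRFCharFct}, so it is available, but it is worth recalling explicitly that without it the conditioning argument in Lemma \ref{LE:ExpValueIndepRVs} would not apply. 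Once these integrability and interchange issues are dispatched, the remainder is algebraic rearrangement.
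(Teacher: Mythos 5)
Your proposal is correct and follows essentially the same route as the paper's proof: reduce via Lemma \ref{LE:ExpValueIndepRVs}/Corollary \ref{COR:GeneralFormulaCharFctSubordGRF} and independence to a product of subordinator Laplace transforms (the paper's Claim~2), then rewrite each exponent through the Gaussian mixture identity and Fubini to produce $\nu_k^\#$, using the symmetry of the Gaussian to insert the compensator $i\xi z\mathds{1}_{\{|z|\le 1\}}$ for free (the paper's Claim~1, where the factor $\sigma^2$ is handled by rescaling the L\'evy measure rather than by folding it into the Gaussian density as you do). The integrability and Fubini caveats you flag are real but are treated at the same (formal) level in the paper itself.
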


\begin{proof}
For notational simplicity we prove the assertion for $d=2$. For general $d\in\mathbb{N}$ the assertion follows by the same arguments. 

\textit{Claim 1:} For a Lévy measure $\nu$ on $(\mathbb{R}_+,\mathcal{B}(\mathbb{R}_+))$ it holds for every $\xi \in \mathbb{R}$:
\begin{align*}
\int_0^\infty \exp(-\frac{\xi^2}{2}y)-1\nu(dy) = \int_{\mathbb{R}\setminus \{0\}} \exp(i\xi x) - 1 - i\xi x \mathds{1}_{\{|x|\leq 1\}}(x)\nu^\sharp(dx),
 \end{align*}
where the measure $\nu^\sharp$ is defined by $\nu^\sharp(\mathcal{I})=\int_0^\infty \int_a^b \frac{1}{\sqrt{2\pi t}} \exp(-\frac{x^2}{2t})dx \nu(dt)$, for $\mathcal{I}=[a,b]$ with $a,b\in \mathbb{R}$. We use the notation $f_s(x):= \frac{1}{\sqrt{2\pi s}}\exp(-\frac{x^2}{2s})$ for $s>0$ and $x\in \mathbb{R}$ and derive this equation by a direct calculation using the definition of the measure $\nu^\sharp$:
\begin{align*}
\int_{\mathbb{R}\setminus \{0\}} \exp(i\xi x) - 1 &- i\xi x \mathds{1}_{\{|x|\leq 1\}}(x)\nu^\sharp(dx)\\&= \int_{\mathbb{R}\setminus \{0\}}(\exp(i\xi x) - 1 - i\xi x \mathds{1}_{\{|x|\leq 1\}}(x)) \int_0^\infty f_s(x)\nu(ds)dx\\
&=\int_0^\infty \int_{\mathbb{R}\setminus \{0\}}\exp(i\xi x)f_s(x)dx - 1 - i\xi \int_{-1}^1 xf_s(x)dx \nu(ds)\\
&=\int_0^\infty \exp(-\frac{s\xi^2}{2}) - 1\nu(ds).
\end{align*} 
In the last step we used that the characteristic function of a $\mathcal{N}(0,s)$-distributed random variable is given by $\phi(\xi)=\exp(-\frac{s\xi^2}{2})$ for $\xi \in \mathbb{R}$ and $s>0$. Further, we used the fact that $f_s'(x)=-x/sf_s(x)$ to see that 
\begin{align*}
\int_{-1}^1 xf_s(x)=-s(f_s(1)-f_s(-1))=0.
\end{align*}

\textit{Claim 2:} (See \cite[P. 53]{LevyProcessesAndStochasticCalculus}.)
For a Lévy subordinator $l$ with triplet $(\gamma, 0, \nu)$ it holds
\begin{align*}
\mathbb{E}(\exp(-\xi l(t))) = \exp(-t(\gamma \xi + \int_0^\infty (1-\exp(-\xi y))\nu(dy))),
\end{align*}
for $t\geq 0$ and $\xi>0$. 

With these two assertions at hand we can now prove the \lk \space formula. The case $\xi=0$ is trivial since both sides equal $1$ in this case.
Let $(x,y)\in [0,\mathbb{T}]_2$ and $0\neq \xi \in \mathbb{R}$ be fixed. Using Lemma \ref{LE:ExpValueIndepRVs} and Remark \ref{REM:LemmaAlsoComplexFunctions} with $g(\cdot)=\exp(i\xi \cdot)$ and $Z=(l^1(x), l^2(y))$ we calculate
\begin{align*}
\mathbb{E}(\exp(i\xi W(l_1(x),l_2(y)))) = \mathbb{E}(m(l_1(x),l_2(y)))
\end{align*}
where
\begin{align*}
m(x',y'):=\mathbb{E}(\exp(i\xi W(x',y'))) = \exp(-\frac{1}{2}\sigma^2\xi^2(x'+y')) \text{ for }(x',y')\in \mathbb{R}_+^2,
\end{align*}
where we used Assumption \ref{ASS:GRFCharFct}. Therefore, using the independence of the processes $l_1$ and $l_2$ together with Claim 2 we obtain
\begin{align*}
\phi_{L(x,y)}(\xi) &= \mathbb{E}(\exp(-\frac{1}{2}\sigma^2\xi^2(l_1(x) + l_2(y))))\\
&=\mathbb{E}(\exp(-\frac{1}{2}\sigma^2\xi^2l_1(x)))\, \mathbb{E}(\exp(-\frac{1}{2}\sigma^2\xi^2l_2(x)))\\
&=\exp(-x(\gamma_1 \frac{\sigma^2\xi^2}{2} + \int_0^\infty (1-\exp(-\frac{\sigma^2\xi^2}{2}y))\nu_1(dy))) \\
&~~~\cdot \exp(-y(\gamma_2 \frac{\sigma^2\xi^2}{2} + \int_0^\infty (1-\exp(-\frac{\sigma^2\xi^2}{2}y))\nu_2(dy)))\\
&=\exp(-x(\gamma_1 \frac{\sigma^2\xi^2}{2} + \int_0^\infty (1-\exp(-\frac{\xi^2}{2}y))\hat{\nu}_1(dy))) \\
&~~~\cdot \exp(-y(\gamma_2 \frac{\sigma^2\xi^2}{2} + \int_0^\infty (1-\exp(-\frac{\xi^2}{2}y))\hat{\nu}_2(dy))),
\end{align*}
where we define the (Lévy-)measures $\hat{\nu}_1$ and $\hat{\nu}_2$ by $\hat{\nu}_k([a,b])=\nu_k([a/\sigma^2,b/\sigma^2])$ for $a,~b\in \mathbb{R}_+$ and $k=1,2$. Now, using Claim 1 we calculate 
\begin{align*}
\phi_{L(x,y)}(\xi)&=\exp\big(-x(\gamma_1 \frac{\sigma^2\xi^2}{2} - \int_{\mathbb{R}\setminus \{0\}} \exp(i\xi x) - 1 - i\xi x \mathds{1}_{\{|x|\leq 1\}}(x)\hat{\nu}_1^\sharp(dx))) \\
&~~~~~~~~~-y(\gamma_2 \frac{\sigma^2\xi^2}{2} - \int_{\mathbb{R}\setminus \{0\}} \exp(i\xi x) - 1 - i\xi x \mathds{1}_{\{|x|\leq 1\}}(x)\hat{\nu}_2^\sharp(dx))\big),
\end{align*}
where the measures $\hat{\nu}_k^\sharp$ for $k=1,2$ are given by:
\begin{align*}
\hat{\nu}_k^\sharp([a,b])&=\int_0^\infty\int_a^b\frac{1}{\sqrt{2\pi t}}\exp(-\frac{x^2}{2t})dx\hat{\nu}_k(dt)\\
&=\int_0^\infty \int_a^b \frac{1}{\sqrt{2\pi\sigma^2 t}} \exp(-\frac{x^2}{2\sigma^2 t})dx \nu_k(dt),
\end{align*}
for $a,b\in \mathbb{R}$. This finishes the proof.
\end{proof}
 
By the convolution theorem we immediately obtain the following corollary (see \cite[Lemma 15.11 (iv)]{WTheorie}).
\begin{corollary}\label{COR:SubordGRFSumOfLevyProcesses}
Let Assumption \ref{ASS:GRFCharFct} hold. We assume $d$ independent Lévy subordinators $l_k=(l_k(x),~x\in [0,T_k])$ are given for $k=1,\dots,d$, which are independent of $W$  and the corresponding Lévy triplets are given by $(\gamma_k,0,\nu_k)$ for $k=1,\dots,d$. We consider the subordinated GRF $L:\Omega \times [0,\mathbb{T}]_d\rightarrow \mathbb{R}$ defined by $L(\underline{x}):=W(l_1(x_1),\dots,l_k(x_d)) \text{, for } \underline{x}=(x_1,\dots,x_d)\in [0,\mathbb{T}]_d$. Further, we assume that independent Lévy processes $\tilde{l}_k$ on $[0,T_k]$ are given with triplets $(0, \sigma^2\gamma_k/2, \nu_k^\#)$ for $k=1,\dots,d$ in the sense of the one-dimensional \lk \space formula, see Theorem \ref{TH:LevyKhinchinFormula1d}. Here, the Lévy measure $\nu_k^\#$ is defined by
\begin{align*}
\nu_k^\#([a,b]):=\int_0^\infty \int_a^b\frac{1}{\sqrt{2\pi\sigma^2t}}\exp\left(-\frac{x^2}{2\sigma^2t}\right)dx\,\nu_k(dt),
\end{align*}
 for $k =1,\dots, d$ and $a,b\in \mathbb{R}$. The pointwise marginal distribution of the subordinated GRF satisfies
\begin{align*}
L(\underline{x})\stackrel{\mathcal{D}}{=}\tilde{l}_1(x_1) + \dots + \tilde{l}_d(x_d),
\end{align*}
for every $\underline{x}=(x_1,\dots,x_d)\in [0,\mathbb{T}]_d$.
\end{corollary}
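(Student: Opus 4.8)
The plan is to identify the two laws through their characteristic functions: since a probability measure on $(\mathbb{R},\mathcal{B}(\mathbb{R}))$ is uniquely determined by its characteristic function, it suffices to verify that $\phi_{L(\underline{x})}$ and $\phi_{\tilde{l}_1(x_1)+\dots+\tilde{l}_d(x_d)}$ coincide for each fixed $\underline{x}=(x_1,\dots,x_d)\in[0,\mathbb{T}]_d$.

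First I would take the closed-form expression for $\phi_{L(\underline{x})}$ provided by Theorem \ref{TH:LevyKhinchinFormula} and expand the Euclidean scalar product $(x_1,\dots,x_d)\cdot(\cdots)$ appearing in its exponent. The exponent then becomes a sum of $d$ terms, the $k$-th being $x_k\Psi_k(\xi)$, where $\Psi_k$ depends only on $\sigma$, on the drift $\gamma_k$ and on the measure $\nu_k^\#$ (the $k$-th coordinate of $\nu_{ext}$); in particular $\Psi_k$ has the shape of a one-dimensional Lévy-Khinchin exponent, namely a Gaussian term in $\xi^2$ plus a jump integral against $\nu_k^\#$. Hence $\phi_{L(\underline{x})}(\xi)=\prod_{k=1}^d\exp(x_k\Psi_k(\xi))$.

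Second, I would apply the one-dimensional Lévy-Khinchin formula (Theorem \ref{TH:LevyKhinchinFormula1d}) to the Lévy process $\tilde{l}_k$ with the triplet prescribed in the statement; this shows that the characteristic function of $\tilde{l}_k(x_k)$ equals $\exp(x_k\Psi_k(\xi))$, i.e.\ the $k$-th factor obtained above. Third, the processes $\tilde{l}_1,\dots,\tilde{l}_d$ being independent, the convolution theorem \cite[Lemma 15.11 (iv)]{WTheorie} gives
\begin{align*}
\phi_{\tilde{l}_1(x_1)+\dots+\tilde{l}_d(x_d)}(\xi)=\prod_{k=1}^d\phi_{\tilde{l}_k(x_k)}(\xi)=\prod_{k=1}^d\exp(x_k\Psi_k(\xi))=\phi_{L(\underline{x})}(\xi),\qquad\xi\in\mathbb{R},
\end{align*}
and equality of the characteristic functions yields $L(\underline{x})\stackrel{\mathcal{D}}{=}\tilde{l}_1(x_1)+\dots+\tilde{l}_d(x_d)$.

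No limiting argument or estimate is involved; the only point requiring attention is the bookkeeping when passing from the vector-valued representation in Theorem \ref{TH:LevyKhinchinFormula} to $d$ separate one-dimensional Lévy-Khinchin exponents, in particular checking that the Gaussian coefficient attached to the $k$-th coordinate (originating from the subordinator drift $\gamma_k$) and the jump measure $\nu_k^\#$ are matched against the correct entries of the one-dimensional triplet. This is the main — and quite mild — obstacle: the corollary is essentially a restatement of Theorem \ref{TH:LevyKhinchinFormula} combined with the independence of the $\tilde{l}_k$.
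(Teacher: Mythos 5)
Your proposal is correct and follows essentially the same route as the paper: the paper likewise rewrites the exponent from Theorem \ref{TH:LevyKhinchinFormula} as a product of $d$ one-dimensional Lévy--Khinchin factors with triplets $(0,\sigma^2\gamma_k/2,\nu_k^\#)$ and concludes via the convolution theorem and uniqueness of characteristic functions. No gaps.
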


\begin{proof}
By Theorem \ref{TH:LevyKhinchinFormula} the characteristic function of $L$ at a fixed point $\underline{x}=(x_1,\dots,x_d)\in [0,\mathbb{T}]_d$ admits the representation

\begin{equation*}
\phi_{L(\underline{x})}(\xi) = \prod_{i=1}^d \exp\Big( 
-\frac{\sigma^2\xi^2}{2}x_i\gamma_i - x_i\int_{\mathbb{R}\setminus \{0\}} 1 - e^{i\xi z} + i\xi z \mathds{1}_{\{|z|\leq 1\}}(z) \nu_i^\# (dz) 
\Big), \text{ for }\xi \in \mathbb{R}.
\end{equation*}
The assertion then follows by the convolution theorem.
\end{proof}

We point out that the case of stationary GRFs is excluded by Assumption \ref{ASS:GRFCharFct}. Therefore, we consider this situation in the following remark where we again assume $d=2$ for notational simplicity.
 
\begin{rem}\label{REM:PointwiseDistStatGRF} Let $W$ be a stationary GRF with covariance function $q((x,y),(x',y')=\tilde{q}((|x-x'|,|y-y'|))$, for $(x,y),~(x',y')$ $\in \mathbb{R}_+^2$, and pointwise variance $\sigma^2:=\tilde{q}((0,0))>0$. Let $l_1$ and $l_2$ be independent Lévy subordinators, which are also independent of W. We obtain by Lemma \ref{LE:ExpValueIndepRVs} the following representation for the characteristic function of the subordinated random field defined by $L(x,y):=W(l_1(x),l_2(y))$, for $(x,y)\in[0,\mathbb{T}]_2$:
\begin{align*}
\phi_{L(x,y)}(\xi)=\mathbb{E}(\exp(i\xi W(l_1(x),l_2(y)))=\mathbb{E}(m(l_1(x),l_2(y))),
\end{align*}
where
\begin{align*}
m(x',y')=\mathbb{E}(\exp(i\xi W(x',y')))=\exp(-\frac{1}{2}\sigma^2\xi^2),
\end{align*}
which is a constant function in $(x',y')$. Therefore we obtain
\begin{align*}
\phi_{L(x,y)}(\xi)=\exp(-\frac{1}{2}\sigma^2\xi^2),
\end{align*}
for $(x,y)\in[0,\mathbb{T}]_2$. Hence, in case of a stationary GRF, the subordinated GRF is pointwise normally distributed with variance $\sigma^2$.
\end{rem} 

We conclude this subsection with a remark on the given \lk \space formula and its meanings.
 
\begin{rem}\label{REM:CharacterizationPropertyLevyKhinchin}
With the approach of subordinating GRFs on a higher-dimensional domain, we obtain a discontinuous Lévy-type random field and a \lk \space formula which allows access to the pointwise distribution of the random field. Further we obtain a similar parametrization of the class of subordinated random fields, as it is the case for Lévy processes on a one-dimensional parameter space: Under the assumptions of Theorem \ref{TH:LevyKhinchinFormula}, every subordinated GRF can be characterized by the tupel $(\sigma^2,\gamma_1,\dots,\gamma_d,\nu_{ext},q)$, where $q:\mathbb{R}_+^d\times\mathbb{R}_+^d\rightarrow \mathbb{R}$ is the covariance function of the GRF. Further, the class of subordinated GRFs is linear in the sense that for the sum of two independent subordinated GRFs one can construct a single subordinated GRF with the same pointwise characteristic function.
\end{rem} 
 

	\section{Covariance function}\label{SEC:CovFct}
	One advantage of the subordinated GRF is that the correlation between spatial points is accessible. The correlation structure is hereby determined by the covariance function of the underlying GRF and the specific choice of the subordinators. For statistical applications it is often important to image or enforce a specific correlation structure in view of fitting random fields to physical phenomena. In this context the question arises whether one can find analytically explicit formulas for the covariance function of a subordinated Gaussian random field. This will be explored in the following section.
	
	For notational simplicity we restrict the dimension to be $d=2$ in this section but we point out that analogous results apply for dimensions $d\geq 3$. A direct application of Lemma \ref{LE:ExpValueIndepRVs} yields the following corollary.
   \begin{corollary}\label{COR:CovFctGeneral}
   Let $W$ be a continuous, zero-mean GRF on $\mathbb{R}_+^2$. Further, let $l_1$ and $l_2$ be two independent Lévy subordinators which are independent of $W$. Then the subordinated GRF $L$ defined by $L(x,y):=W(l_1(x),l_2(y))$, for $(x,y)\in \mathbb{R}_+^2$, is zero-mean with covariance function
   \begin{align*}
   q_L((x,y),(x',y')):=\mathbb{E}(L(x,y)L(x',y'))=\mathbb{E}(q_W((l_1(x),l_2(y)),(l_1(x'),l_2(y')))),
   \end{align*}
   for $(x,y),~(x',y')\in \mathbb{R}_+^2$, where $q_W:\mathbb{R}_+^2\times \mathbb{R}_+^2\rightarrow\mathbb{R}$ denotes the covariance function of the GRF $W$.
   \end{corollary}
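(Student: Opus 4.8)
The plan is to apply Lemma~\ref{LE:ExpValueIndepRVs} essentially verbatim, but with a vector-valued inner argument and a bivariate outer function, so the first step is to observe that the lemma (and the dominated/monotone convergence machinery in its proof) extends from $Z:\Omega\to\R_+^d$ to $Z:\Omega\to\R_+^{2d}$ with no change: here we take $d'=2d=4$, set $Z:=(l_1(x),l_2(y),l_1(x'),l_2(y'))$, and use the continuous map $G:\R_+^4\to\R$ given by $G(a,b,c,d):=W(a,b)\cdot W(c,d)$ viewed as a single continuous random field on $\R_+^4$. Strictly the ``random field'' in the lemma is $\R$-valued and continuous, and $(a,b,c,d)\mapsto W(a,b)W(c,d)$ is indeed $\bP$-a.s.\ continuous on $\R_+^4$ because $W$ is $\bP$-a.s.\ continuous on $\R_+^2$; and $Z$ is independent of this new field since $(l_1,l_2)$ is independent of $W$. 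Applying the lemma with $g=\mathrm{id}$ (or, to stay inside the hypotheses, first noting $\mathrm{id}$ is continuous so Steps~3--4 of the lemma's proof cover it) gives
\begin{align*}
\E\big(W(l_1(x),l_2(y))\,W(l_1(x'),l_2(y'))\big)=\E\big(m(l_1(x),l_2(y),l_1(x'),l_2(y'))\big),
\end{align*}
where $m(a,b,c,d):=\E\big(W(a,b)W(c,d)\big)=q_W((a,b),(c,d))$ by the very definition of the covariance function of the zero-mean GRF $W$. Substituting back yields the claimed identity $q_L((x,y),(x',y'))=\E\big(q_W((l_1(x),l_2(y)),(l_1(x'),l_2(y')))\big)$.

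For the zero-mean assertion I would argue separately and more cheaply: fix $(x,y)\in\R_+^2$ and apply Lemma~\ref{LE:ExpValueIndepRVs} with $g=\mathrm{id}$ and $Z=(l_1(x),l_2(y))$ to get $\E(L(x,y))=\E(m_1(l_1(x),l_2(y)))$ where $m_1(a,b)=\E(W(a,b))=0$ for all $(a,b)\in\R_+^2$ since $W$ is zero-mean; hence $\E(L(x,y))=0$. With the mean identically zero, the covariance function of $L$ is exactly $\E(L(x,y)L(x',y'))$, so the two computations together give the corollary. One should also record the trivial measurability/integrability caveat: the identity is an equality in $[-\infty,\infty]$ unless one knows $q_W$ has finite expectation under the law of $(l_1(x),l_2(y),l_1(x'),l_2(y'))$; for the statement as phrased (a formula for the covariance function) this is implicitly assumed, exactly as in Corollary~\ref{COR:GeneralFormulaCharFctSubordGRF}, so I would simply note that when the right-hand side is finite the left-hand side is well defined and equals it.

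The only genuine wrinkle — and the step I would flag as the main obstacle — is the justification that Lemma~\ref{LE:ExpValueIndepRVs} applies to the \emph{product} field $G(a,b,c,d)=W(a,b)W(c,d)$ rather than to $W$ itself, i.e.\ checking that nothing in the lemma's hypotheses secretly required the inner field to be the one subordinated. It does not: the lemma only asks for a $\bP$-a.s.\ continuous $\R$-valued random field on some $\R_+^{d'}$, a vector random variable $Z$ valued in that $\R_+^{d'}$ and independent of the field, and a continuous deterministic $g:\R\to\R$. All three are met, with $d'=4$. (If one prefers to avoid re-invoking the lemma in higher dimension, an alternative is to condition on $\sigma(l_1,l_2)$ directly: by independence, $\E(L(x,y)L(x',y')\mid l_1,l_2)=q_W((l_1(x),l_2(y)),(l_1(x'),l_2(y')))$ $\bP$-a.s., and taking expectations gives the result — but this needs the same joint-measurability bookkeeping that Lemma~\ref{LE:ExpValueIndepRVs} was designed to package cleanly, so routing through the lemma is the cleaner path.) Everything else is bookkeeping: the extension of the convergence arguments in Steps~1--4 of the lemma's proof to $d'=4$ is word-for-word identical, so no new estimate is needed.
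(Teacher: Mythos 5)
Your proposal is correct and follows essentially the same route as the paper: the paper likewise proves the zero-mean claim by applying Lemma~\ref{LE:ExpValueIndepRVs} with $g=\mathrm{id}$ and $Z=(l_1(x),l_2(y))$, and obtains the covariance formula by applying the same lemma to the product field $\tilde{W}(x,y,x',y'):=W(x,y)\cdot W(x',y')$ on $\mathbb{R}_+^4$ with $Z=(l_1(x),l_2(y),l_1(x'),l_2(y'))$, exactly as you do. Your additional remarks on the integrability caveat and on why the lemma applies to the product field are sound bookkeeping that the paper leaves implicit.
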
	
	
\begin{proof}
 For $(x,y)\in [0,\mathbb{T}]_2$ it holds
	\begin{align*}
	\mathbb{E}(L(x,y))=\mathbb{E}(W(l_1(x),l_2(y)))=\mathbb{E}(m(l_1(x),l_2(y)))=0,
	\end{align*}
    since $	m(x',y')=\mathbb{E}(W(x',y'))=0$, for $(x',y')\in \mathbb{R}_+^2$. 
    
    Let $(x,y),~(x',y')\in [0,\mathbb{T}]_2$ be fixed. Another application of Lemma \ref{LE:ExpValueIndepRVs} with $\tilde{W}(x,y,x',y'):=W(x,y)\cdot W(x',y')$, $g=id_d$ and $Z:=(l_1(x),l_2(y),l_1(x'),l_2(y'))$ yields
    \begin{align*}
    q_L((x,y),(x',y'))=\mathbb{E}(L(x,y)L(x',y')) = \mathbb{E}(m(l_1(x),l_2(y),l_1(x'),l_2(y'))),
    \end{align*}
    where the function $m$ is given by
    \begin{align*}
    m(x_1,y_1,x_2,y_2):=\mathbb{E}(W(x_1,y_1)W(x_2,y_2))=q_W((x_1,y_1),(x_2,y_2)),
    \end{align*}
    for $x_1,y_1,x_2,y_2\in \mathbb{R}_+$, which finishes the proof.
\end{proof}	
\subsection{The stationary case}
We use Corollary \ref{COR:CovFctGeneral} to derive a semi-explicit formula for the covariance function of the subordinated GRF, where the underlying GRF is stationary.
\begin{lemma}\label{LE:CovFctStatCase}
Let $W:\mathbb{R}_+^2\rightarrow\mathbb{R}$ be a zero-mean, continuous and stationary GRF with covariance function $q_W((x,y),(x',y'))=\tilde{q}_W(|x-x'|,|y-y'|)$. Further, suppose that $l_1$ and $l_2$ are independent Lévy subordinators on $[0,T_1]$ (resp. $[0,T_2]$) with density functions $f_1$ and $f_2$, i.e. $f_1^x(\cdot)$ (resp. $l_2^y(\cdot)$) is the density function of $l_1(x)$ (resp. $l_2(y)$) for $(x,y)\in [0,\mathbb{T}]_2$. The covariance function of the subordinated GRF $L$ with $L(x,y):=W(l_1(x),l_2(y))$, for $(x,y)\in [0,\mathbb{T}]_2$, admits the representation
\begin{align*}
q_L((x,y),(x',y'))=\int_{\mathbb{R}_+}\int_{\mathbb{R}_+} \tilde{q}_W(s,t)f_1^{|x-x'|}(s)f_2^{|y-y'|}(t)dsdt,
\end{align*}
 for $(x,y),~(x',y')\in [0,\mathbb{T}]_2$ with $x\neq x'$ and $y\neq y'$.
\end{lemma}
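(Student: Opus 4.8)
The plan is to reduce everything to Corollary~\ref{COR:CovFctGeneral} and then exploit stationarity together with the defining properties of L\'evy subordinators (stationary increments and monotonicity).

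First I would apply Corollary~\ref{COR:CovFctGeneral}: since $W$ is a continuous, zero-mean GRF and $l_1,l_2$ are independent subordinators independent of $W$, it yields
\begin{align*}
q_L((x,y),(x',y'))=\mathbb{E}\big(q_W((l_1(x),l_2(y)),(l_1(x'),l_2(y')))\big).
\end{align*}
By the stationarity assumption the integrand equals $\tilde{q}_W(|l_1(x)-l_1(x')|,|l_2(y)-l_2(y')|)$, so it remains to evaluate the expectation
\begin{align*}
\mathbb{E}\big(\tilde{q}_W(|l_1(x)-l_1(x')|,|l_2(y)-l_2(y')|)\big).
\end{align*}

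Next I would determine the joint law of the random vector $(|l_1(x)-l_1(x')|,|l_2(y)-l_2(y')|)$. Assume without loss of generality that $x<x'$ and $y<y'$, the remaining cases being symmetric. Because $l_1$ is $\mathbb{P}$-almost surely non-decreasing, $|l_1(x)-l_1(x')|=l_1(x')-l_1(x)$, and by the stationary-increments property this has the same law as $l_1(x'-x)=l_1(|x-x'|)$, which by hypothesis possesses the density $f_1^{|x-x'|}$; similarly $|l_2(y)-l_2(y')|\stackrel{\cD}{=}l_2(|y-y'|)$ with density $f_2^{|y-y'|}$. This is precisely where the restriction $x\neq x'$, $y\neq y'$ enters, since $l_1(0)=l_2(0)=0$ carry no density. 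Since moreover $|l_1(x)-l_1(x')|$ is a function of $l_1$ alone and $|l_2(y)-l_2(y')|$ a function of $l_2$ alone, the independence of $l_1$ and $l_2$ makes the two coordinates independent, so the vector has joint Lebesgue density $(s,t)\mapsto f_1^{|x-x'|}(s)f_2^{|y-y'|}(t)$ on $\mathbb{R}_+^2$.

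Finally, stationarity together with the Cauchy--Schwarz inequality gives $|\tilde{q}_W(s,t)|\le \tilde{q}_W(0,0)<\infty$ for all $s,t\ge 0$, so $\tilde{q}_W$ is bounded and, being continuous, measurable; rewriting the expectation above as an integral against the joint density just obtained then produces
\begin{align*}
q_L((x,y),(x',y'))=\int_{\mathbb{R}_+}\int_{\mathbb{R}_+}\tilde{q}_W(s,t)\,f_1^{|x-x'|}(s)\,f_2^{|y-y'|}(t)\,ds\,dt,
\end{align*}
which is the claim (absolute convergence makes the order of integration irrelevant). The main point to be careful about is the distributional identity $|l_1(x)-l_1(x')|\stackrel{\cD}{=}l_1(|x-x'|)$, which combines the monotonicity of the subordinator with its stationary increments, together with the independence of the two coordinates; the remaining steps are routine bookkeeping.
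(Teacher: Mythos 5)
Your proof is correct and follows essentially the same route as the paper, which simply invokes Corollary~\ref{COR:CovFctGeneral} together with the independence of $l_1$ and $l_2$ and the identity $|l_k(x)-l_k(x')|\stackrel{\cD}{=}l_k(|x-x'|)$; you have merely spelled out the details (monotonicity plus stationary increments for the distributional identity, the product form of the joint density, and the boundedness of $\tilde{q}_W$ ensuring integrability) that the paper leaves implicit.
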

For $x=x'$ and $y\neq y'$ it holds 
\begin{align*}
q_L((x,y),(x,y'))=\int_{\mathbb{R}_+}\tilde{q}_W(0,t)f_2^{|y-y'|}(t)dt,
\end{align*}
for $x\neq x'$ and $y=y'$ one obtains 
\begin{align*}
q_L((x,y),(x',y))=\int_{\mathbb{R}_+}\tilde{q}_W(s,0)f_1^{|x-x'|}(s)ds,
\end{align*}
and for $(x,y)=(x',y')$ the pointwise variance is given by 
\begin{equation*}
\var(L(x,y))=q_L((x,y),(x,y))=\tilde{q}(0,0).
\end{equation*}
\begin{proof}
The assertion follows immediately by Corollary \ref{COR:CovFctGeneral} together with the independence of the processes $l_1$ and $l_2$ and the fact that $|l^k(x)-l^k(x')|\stackrel{\mathcal{D}}{=}l^k(|x-x'|)$ for $x,~x'\in[0,T_k]$ and $k=1,2$ by the definition of a Lévy process.
\end{proof}

\subsection{The non-stationary case}
In this subsection, we derive a formula for the covariance function of the subordinated GRF for the case that the underlying GRF is non stationary. The following lemma will be useful in the proof of the covariance representation.
\begin{lemma}\label{LE:JointDensityLevyProcess}
Let $l=(l(x),~x\in[0,T])$ be a general Lévy process with density function $f:[0,T]\times \mathbb{R}\rightarrow\mathbb{R}$, i.e. the probability density function of the random variable $l(x)$ is given by $f^x(\cdot)$, for $x\in[0,T]$. In this case, the joint  probability density function of $Z:=(l(x),l(x'))$ with $x\neq ~x'\in[0,T]$ is given by $f_Z(s,t)=f^{\min(x,x')}(s)\cdot f^{|x'-x|}(t-s)$ for $t,s\in\mathbb{R}$.
\end{lemma}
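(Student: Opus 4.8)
The plan is to prove the joint density formula for $Z=(l(x),l(x'))$ by reducing it to the independence and stationarity of increments of the Lévy process $l$. Without loss of generality, assume $x<x'$, so $\min(x,x')=x$ and $|x'-x|=x'-x$. The idea is to write $l(x')=l(x)+(l(x')-l(x))$ and to exploit that $l(x)$ and the increment $l(x')-l(x)$ are independent, with $l(x')-l(x)\stackrel{\cD}{=}l(x'-x)$ by stationarity of increments.

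First I would introduce the linear change of variables $T:\R^2\to\R^2$, $(a,b)\mapsto(a,a+b)$, which is a bijection with inverse $(s,t)\mapsto(s,t-s)$ and Jacobian determinant equal to $1$. Setting $A:=l(x)$ and $B:=l(x')-l(x)$, the pair $(A,B)$ has, by independence of increments, the product density $f^{x}(a)\,f^{x'-x}(b)$, where I use stationarity of increments to identify the density of $B=l(x')-l(x)\stackrel{\cD}{=}l(x'-x)$ (recall $l(0)=0$ a.s.) as $f^{x'-x}$. Since $Z=(l(x),l(x'))=(A,A+B)=T(A,B)$, the change-of-variables formula for densities gives
\begin{align*}
f_Z(s,t)=f_{(A,B)}\big(T^{-1}(s,t)\big)\,\big|\det DT^{-1}(s,t)\big|=f^{x}(s)\,f^{x'-x}(t-s),
\end{align*}
which is the claimed formula.

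To make this rigorous I would verify the transformation-of-densities step at the level of distributions: for any bounded measurable $h:\R^2\to\R$,
\begin{align*}
\E\big(h(l(x),l(x'))\big)=\E\big(h(A,A+B)\big)=\int_{\R}\int_{\R}h(a,a+b)\,f^{x}(a)\,f^{x'-x}(b)\,da\,db,
\end{align*}
and then substitute $s=a$, $t=a+b$ (so $b=t-s$, $da\,db=ds\,dt$) to obtain $\int_{\R^2}h(s,t)\,f^{x}(s)\,f^{x'-x}(t-s)\,ds\,dt$, which identifies $f_Z(s,t)=f^{x}(s)\,f^{x'-x}(t-s)$ as the density of $Z$. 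The case $x'<x$ is symmetric and yields $f_Z(s,t)=f^{x'}(t)\,f^{x-x'}(s-t)$; one checks this matches the stated formula with the roles of the arguments bookkept correctly, which is exactly why the statement is phrased with $\min(x,x')$ and $|x'-x|$.

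The only genuine subtlety — and the step I would be most careful about — is the existence and measurability of the densities $f^x$ themselves and the passage from "$l(x)$ has density $f^x$" to "$(l(x),l(x'-x))$ has the product density": this requires knowing that the distribution of the increment over $[0,x'-x]$ is absolutely continuous with the \emph{same} density $f^{x'-x}$ that appears in the hypothesis, which is precisely the content of stationarity of increments together with the standing assumption that $l(t)$ is absolutely continuous for every $t$. Everything else is a routine application of the independence of increments and the (unit-Jacobian) linear change of variables, so I do not expect any real obstacle there.
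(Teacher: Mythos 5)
Your proof is correct and takes essentially the same route as the paper: both decompose the later-time value as the earlier-time value plus an independent increment whose law is $f^{|x'-x|}$ by stationarity, and then perform the unit-Jacobian substitution $t\mapsto t-s$ — the paper simply carries this out at the level of the joint CDF rather than via a change of variables for densities. Your bookkeeping of the case $x'<x$, where the density of $(l(x),l(x'))$ comes out as $f^{x'}(t)f^{x-x'}(s-t)$ (the stated formula with the two arguments interchanged), is in fact slightly more careful than the paper's ``the same argument yields'' remark.
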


\begin{proof}Let $x,~x'\in[0,T]$ with $x< x'$ and $x_1,x_2\in \mathbb{R}$ be fixed. The increment $l(x')-l(x)$ is stochastically independent of the random variable $l(x)$, which yields
\begin{align*}
\mathbb{P}(l(x)\leq x_1 \wedge l(x')\leq x_2) &= \mathbb{E}(\mathds{1}_{\{l(x)\leq x_1\}}\mathds{1}_{\{l(x')\leq x_2\}})\\
&=\mathbb{E}(\mathds{1}_{\{l(x)\leq x_1\}}\mathds{1}_{\{l(x')-l(x)\leq x_2-l(x)\}})\\
&=\int_\mathbb{R}\int_\mathbb{R}\mathds{1}_{\{s\leq x_1\}}\mathds{1}_{\{t\leq x_2-s\}}f^x(s)f^{x'-x}(t)dtds\\
&=\int_{-\infty}^{x_1}\int_{-\infty}^{x_2-s}f^x(s)f^{x'-x}(t)dtds\\
&=\int_{-\infty}^{x_1}\int_{-\infty}^{x_2}f^x(s)f^{x'-x}(t-s)dtds.
\end{align*}
For the case that $x'< x$ the same argument yields
\begin{align*}
\mathbb{P}(l(x)\leq x_1 \wedge l(x')\leq x_2)=\int_{-\infty}^{x_1}\int_{-\infty}^{x_2}f^{x'}(s)f^{x-x'}(t-s)dtds,
\end{align*}
which finishes the proof.
\end{proof}

\begin{rem}\label{REM:JointDistributionLevyProcess}
Note that Lemma \ref{LE:JointDensityLevyProcess} immediately implies that the joint density $f_Z(s,t)$ of the two-dimensional random vector $Z=(l(x),l(x'))$ for a Lévy subordinator $l$ on $[0,T]$ and $x\neq x'\in [0,T]$ is given by
\begin{align*}
f_Z(s,t)= f^{\min(x,x')}(s)\cdot f^{|x'-x|}(t), \text{ for } s,t\in\mathbb{R}_+,
\end{align*}
and the joint probability admits the form
\begin{align*}
\mathbb{P}(l(x)\leq x_1 \wedge l(x')\leq x_2)&=\int_{-\infty}^{x_1}\int_{-\infty}^{x_2}f^{\min (x,x')}(s)f^{|x-x'|}(t-s)dtds\\
& = \int_{0}^{x_1}\int_{0}^{x_2}f^{\min (x,x')}(s)f^{|x-x'|}(t-s)d
tds,
\end{align*}
for $x_1,x_2\geq 0$.
\end{rem}

With this lemma at hand we are able to derive a formula for the covariance function of the subordinated (non-stationary) GRF.
\begin{lemma}\label{LE:CovFctNonStatCase}
Let $W:\mathbb{R}_+^2\rightarrow\mathbb{R}$ be a zero-mean, continuous and non-stationary GRF with covariance function $q_W$. Further, suppose that $l_1$ and $l_2$ are independent Lévy subordinators on $[0,T_1]$ (resp. $[0,T_2]$) with density functions $f_1$ and $f_2$, i.e. $f_1^x(\cdot)$ (resp. $l_2^y(\cdot)$) is the density function of $l_1(x)$ (resp. $l_2(y)$) for $(x,y)\in [0,\mathbb{T}]_2$. The covariance function of the subordinared GRF $L$ with $L(x,y):=W(l_1(x),l_2(y))$, for $(x,y)\in [0,\mathbb{T}]_2$, admits the representation
\begin{align*}
q_L((x,y),(x',y'))&=\int_{\mathbb{R}_+}\int_{\mathbb{R}_+}\int_{\mathbb{R}_+} \int_{\mathbb{R}_+}  q_W((x_1,x_2),(x_3,x_4))f_1^{\min(x,x')}(x_1)f_2^{\min(y,y')}(x_2)\\
&\hphantom{{}=\int_{\mathbb{R}_+}\int_{\mathbb{R}_+}\int_{\mathbb{R}_+} \int_{\mathbb{R}_+}}\times f_1^{|x'-x|}(x_3-x_1)f_2^{|y-y'|}(x_4-x_2)dx_1\,dx_2\,dx_3\,dx_4,
\end{align*}
 for $(x,y),~(x',y')\in [0,T]^2$ with $x\neq x'$ and $y\neq y'$. 
 
 For $x=x'$ and $y\neq y'$, it holds
 \begin{align*}
q_L((x,y),(x,y'))&=\int_{\mathbb{R}_+}\int_{\mathbb{R}_+}\int_{\mathbb{R}_+}  q_W((x_1,x_2),(x_1,x_4))f_1^{x}(x_1)f_2^{\min(y,y')}(x_2)\\
&\hphantom{{}=\int_{\mathbb{R}_+}\int_{\mathbb{R}_+} \int_{\mathbb{R}_+}}\times f_2^{|y-y'|}(x_4-x_2)dx_1\,dx_2\,dx_4,
\end{align*}
and for $x\neq x'$ and $y=y'$ it holds
 \begin{align*}
q_L((x,y),(x',y))&=\int_{\mathbb{R}_+}\int_{\mathbb{R}_+}\int_{\mathbb{R}_+}  q_W((x_1,x_2),(x_3,x_2))f_1^{\min(x,x')}(x_1)f_2^{y}(x_2)\\
&\hphantom{{}=\int_{\mathbb{R}_+}\int_{\mathbb{R}_+} \int_{\mathbb{R}_+}}\times f_1^{|x'-x|}(x_3-x_1)dx_1\,dx_2\,dx_3.
\end{align*}
For $(x,y)=(x',y')$ one obtains for the pointwise variance of the field 
\begin{align*}
\var(L(x,y))=q_L((x,y),(x,y))=\int_{\mathbb{R}_+} \int_{\mathbb{R}_+} q_W(x_1,x_2,x_1,x_2)f_1^x(x_1)f_2^y(x_2)dx_1dx_2.
\end{align*}
\end{lemma}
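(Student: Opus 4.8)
The plan is to reduce everything to Corollary \ref{COR:CovFctGeneral}, which already tells us that $q_L((x,y),(x',y'))=\mathbb{E}(q_W((l_1(x),l_2(y)),(l_1(x'),l_2(y'))))$, and then to evaluate this expectation by integrating against the joint density of the random vector $(l_1(x),l_1(x'),l_2(y),l_2(y'))$. Since $l_1$ and $l_2$ are independent, this joint density factorizes into the joint density of $(l_1(x),l_1(x'))$ times the joint density of $(l_2(y),l_2(y'))$, so the main input is the description of the two-dimensional joint density of a subordinator evaluated at two distinct time points, which is exactly Remark \ref{REM:JointDistributionLevyProcess} (itself a corollary of Lemma \ref{LE:JointDensityLevyProcess}).

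Concretely, for the case $x\neq x'$ and $y\neq y'$ I would argue as follows. By Remark \ref{REM:JointDistributionLevyProcess}, the joint density of $(l_1(x),l_1(x'))$ at $(x_1,x_3)\in\mathbb{R}_+^2$ is $f_1^{\min(x,x')}(x_1)\,f_1^{|x'-x|}(x_3-x_1)$, and similarly the joint density of $(l_2(y),l_2(y'))$ at $(x_2,x_4)$ is $f_2^{\min(y,y')}(x_2)\,f_2^{|y-y'|}(x_4-x_2)$. Using independence of the two subordinators, the four-dimensional random vector $(l_1(x),l_2(y),l_1(x'),l_2(y'))$ therefore has joint density equal to the product of these two expressions. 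Plugging this into the expectation from Corollary \ref{COR:CovFctGeneral} — which is legitimate because $q_W$ is a covariance function hence (pointwise) measurable and, by continuity of $W$ and Lemma \ref{LE:ExpValueIndepRVs} applied with $g=\mathrm{id}$ and $\tilde W(x_1,x_2,x_3,x_4):=W(x_1,x_2)W(x_3,x_4)$, the expectation is well-defined — yields precisely the claimed fourfold integral. The three degenerate cases $x=x'$, $y\neq y'$; $x\neq x'$, $y=y'$; and $(x,y)=(x',y')$ are handled by the same scheme, except that whenever a pair of arguments collapses (say $x=x'$) the corresponding pair of evaluations $l_1(x),l_1(x')$ becomes a single random variable $l_1(x)$ with one-dimensional density $f_1^x$, so one integration variable disappears and $q_W$ is evaluated with the repeated argument; in the fully degenerate case one is left with the two-dimensional integral of $q_W(x_1,x_2,x_1,x_2)$ against $f_1^x(x_1)f_2^y(x_2)$.

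The only genuine subtlety — and hence the step I would be most careful about — is justifying the interchange of the expectation with the integral representation, i.e. making sure Corollary \ref{COR:CovFctGeneral} (equivalently Lemma \ref{LE:ExpValueIndepRVs} with $g$ the identity, which is unbounded) actually applies: this is exactly why the hypothesis that $W$ is continuous is needed, and implicitly one should note that the displayed integrals converge absolutely, which follows from Cauchy--Schwarz since $|q_W((x_1,x_2),(x_3,x_4))|\le \sqrt{q_W(x_1,x_2,x_1,x_2)}\sqrt{q_W(x_3,x_4,x_3,x_4)}$ and $L(x,y)$ is (by Lemma \ref{LE:ExpValueIndepRVs}) square-integrable. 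Everything else is a bookkeeping substitution: recognize that $(l_1(x),l_1(x'))$ has the Markov-type joint density from Remark \ref{REM:JointDistributionLevyProcess}, substitute, and read off the formula.
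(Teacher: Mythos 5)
Your proposal is correct and follows essentially the same route as the paper: apply Corollary \ref{COR:CovFctGeneral}, factorize the law of $(l_1(x),l_2(y),l_1(x'),l_2(y'))$ using independence of the two subordinators, and insert the joint density of $(l_i(\cdot),l_i(\cdot))$ from Lemma \ref{LE:JointDensityLevyProcess} and Remark \ref{REM:JointDistributionLevyProcess}, with the degenerate cases handled by collapsing the repeated variable. Your extra remark on absolute convergence via Cauchy--Schwarz is a welcome addition that the paper leaves implicit.
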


\begin{proof}
Using Corollary \ref{COR:CovFctGeneral}, the independence of the processes $l_1$ and $l_2$ together with Lemma \ref{LE:JointDensityLevyProcess} and Remark \ref{REM:JointDistributionLevyProcess} we calculate for $(x,y),~(x',y')\in [0,\mathbb{T}]_2$ with $x\neq x'$ and $y\neq y'$:
\begin{align*}
q_L((x,y),(x',y'))&=\mathbb{E}(q_W((l_1(x),l_2(y)),(l_1(x'),l_2(y'))))\\
&=\int_{\mathbb{R}_+^4}q_W((x_1,x_2),(x_3,x_4))d\mathbb{P}_{(l_1(x),l_2(y),l_1(x'),l_2(y'))}(x_1,x_2,x_3,x_4)\\
&=\int_{\mathbb{R}_+^2}\int_{\mathbb{R}_+^2} q_W((x_1,x_2),(x_3,x_4))d\mathbb{P}_{(l_1(x),l_1(x'))}(x_1,x_3)d\mathbb{P}_{(l_2(y),l_2(y'))}(x_2,x_4)\\
&=\int_{\mathbb{R}_+}\int_{\mathbb{R}_+}\int_{\mathbb{R}_+} \int_{\mathbb{R}_+}  q_W((x_1,x_2),(x_3,x_4))f_1^{\min(x,x')}(x_1)f_2^{\min(y,y')}(x_2)\\
&\hphantom{{}=\int_{\mathbb{R}_+}\int_{\mathbb{R}_+}\int_{\mathbb{R}_+} \int_{\mathbb{R}_+}}\times f_1^{|x'-x|}(x_3-x_1)f_2^{|y-y'|} (x_4-x_2)dx_1\,dx_2\,dx_3\,dx_4.
\end{align*}
The remaining cases follow by the same argument.
\end{proof}

\subsection{Statistical fitting of the covariance function}
The parametrization property of the subordinated GRF (see Remark \ref{REM:CharacterizationPropertyLevyKhinchin}) motivates a direct approach of covariance fitting: For a natural number $N\in \mathbb{N}$, we assume that discrete points $\{(x_i,y_i),~i=1,\dots,N\}$ are given with corresponding empirical covariance function data $C^{emp}=\{C_{i,j}^{emp},~i,j=1,\dots,N\}$, where $C_{i,j}$ represents the empirical covariance of the field evaluated at the points $(x_i,y_i)$ and $(x_j,y_j)$. We search for the solution to the problem
\begin{align*}
argmin\Big\{\|\tilde{q}_L - C^{emp}\|_\ast  ~\Big|~ \text{ admissible tuples } (\sigma^2,\gamma_1,\gamma_2,\nu_{ext},C)\Big\}
\end{align*}
where we use the notation $\tilde{q}_L:=\{q_L(x_i,y_i),~i,j=1,\dots,N\}$ and $\|\cdot\|_\ast$ is an appropriate norm on $\mathbb{R}^N$, e.g. the euclidian norm. In order to solve this problem, the formulas for the covariance function given by Lemma \ref{LE:CovFctStatCase} and Lemma \ref{LE:CovFctNonStatCase} can be used, but still the solution cannot be found easily due to the complexity of the set of admissible parameters.


		\section{Stochastic regularity - pointwise moments}
		\label{SEC:StochReg}
		In this section we consider pointwise moments of a subordinated GRF $L$. In particular, we  derive conditions which ensure the existence of pointwise $p$-th moments of the subordinated GRF $L$ defined by $L(\underline{x}):=W(l_1(x_1),\dots,l_d(x_d))$, for $\underline{x} = (x_1,\dots,x_d)\in [0,\mathbb{T}]_d$.

Obviously, in order to guarantee the existence of moments of the random variable $L(\underline{x})$, we have to impose conditions on the GRF $W$ \textit{and} the subordinators $l_1,\dots,l_d$. The following theorem gives a better insight into the interaction between the underlying GRF and the stochastic regularity of the subordinators and presents coupled regularity conditions on the tail behaviour of both components of the random field.

\begin{theorem}\label{TH:PointwiseMoments}
We assume that $W$ is a centered and continuous GRF on $\mathbb{R}_+^d$ with covariance function  $q_W:\mathbb{R}_+^d\times \mathbb{R}_+^d\rightarrow \mathbb{R}$. 
Further, we assume that there exist a positive number $N\in \mathbb{N}$, coefficients $\{c_j,~j=1,\dots,N\}\subset [0,+\infty)$ and $d$-dimensional exponents $\{\underline{\alpha}^{(j)},~j=1,\dots,N\}\subset \mathbb{R}_0^d$ such that the pointwise variance function $\sigma_W^2$ of $W$ satisfies
\begin{align}\label{EQ:TailEstGRFVar}
\sigma_W(\underline{x}) =q_W(\underline{x},\underline{x})^{1/2}\leq \sum_{j=1}^N c_j \underline{x}^{\underline{\alpha}^{(j)}}, \text{ for } x_1,\dots,x_d\geq 0.
\end{align}
Here, we use the notation $\underline{x}^{\underline{\alpha}} = x_1^{\alpha_1}\cdot \dots \cdot x_d^{\alpha_d}$ for $\underline{x}=(x_1,\dots,x_d)\in \mathbb{R}_+^d$ and $\underline{\alpha}=(\alpha_1,\dots,\alpha_d)\in \mathbb{R}_0^d$.
We consider a fixed point $\underline{x}\in [0,\mathbb{T}]_d$ and assume that the densities $f_1^{x_1},\dots,f_d^{x_d}$ of the evaluated processes $l_1(x_1),\dots,l_d(x_d)$ fulfill
\begin{align}\label{EQ:TailEstSubord}
f_i^{x_i}(z) \leq C |z|^{-\eta_i},\text{ for } z\geq K,
\end{align}
with decay rates $\{\eta_i,~i=1,\dots,d\}$ and a finite constant $K>0$. We define the number
\begin{align*}
a := \min\{ {(\eta_i-1)}/{\alpha_i^{(j)}}~\big |~ i=1,\dots,d,~j=1,\dots,N, ~\alpha_i^{(j)}\neq 0\}.
\end{align*} Then, the random variable $L(\underline{x})$ admits a $p$-th moment for $p\in[1,a)$, i.e. $L(\underline{x})\in \mathcal{L}^p(\Omega;\mathbb{R})$ for $p\in [1,a)$.
\end{theorem}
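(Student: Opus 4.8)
The plan is to reduce the moment $\mathbb{E}(|L(\underline{x})|^p)$, via Lemma~\ref{LE:ExpValueIndepRVs}, to an explicit Gaussian moment integrated against the joint law of $(l_1(x_1),\dots,l_d(x_d))$, and then to read off the integrability condition from the polynomial bound \eqref{EQ:TailEstGRFVar} on the pointwise standard deviation together with the tail bound \eqref{EQ:TailEstSubord} on the subordinator densities. Concretely, I would fix $\underline{x}\in[0,\mathbb{T}]_d$ and $p\in[1,a)$ and apply Lemma~\ref{LE:ExpValueIndepRVs} (in its non-negative case, i.e.\ Step~3 of its proof) with the continuous function $g(t)=|t|^p$ and the $\mathbb{R}_+^d$-valued random vector $Z=(l_1(x_1),\dots,l_d(x_d))$, which is independent of $W$; this yields the identity $\mathbb{E}(|L(\underline{x})|^p)=\mathbb{E}(m(l_1(x_1),\dots,l_d(x_d)))$, valid in $[0,+\infty]$, where $m(\underline{z}):=\mathbb{E}(|W(\underline{z})|^p)$. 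Because $W$ is centered Gaussian, $W(\underline{z})\sim\mathcal{N}(0,\sigma_W^2(\underline{z}))$ and hence $m(\underline{z})=M_p\,\sigma_W(\underline{z})^p$, with $M_p:=\mathbb{E}(|G|^p)<\infty$ for $G\sim\mathcal{N}(0,1)$. Inserting \eqref{EQ:TailEstGRFVar}, using $\big(\sum_{j=1}^N a_j\big)^p\leq N^{p-1}\sum_{j=1}^N a_j^p$ (valid for $a_j\geq0$ and $p\geq1$ by Jensen's inequality) and the independence of $l_1,\dots,l_d$, I arrive at
\[
\mathbb{E}(|L(\underline{x})|^p)\;\leq\; M_p\,N^{p-1}\sum_{j=1}^N c_j^p\prod_{i=1}^d \mathbb{E}\big(l_i(x_i)^{\,p\alpha_i^{(j)}}\big).
\]

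It then remains to check that every factor $\mathbb{E}(l_i(x_i)^{p\alpha_i^{(j)}})$ is finite. When $\alpha_i^{(j)}=0$ the factor equals $1$; when $\alpha_i^{(j)}\neq0$ I would split the integral at $K$, bounding $\int_0^K z^{p\alpha_i^{(j)}}f_i^{x_i}(z)\,dz\leq K^{p\alpha_i^{(j)}}$ (the exponent is nonnegative and $f_i^{x_i}$ integrates to at most $1$) and, by \eqref{EQ:TailEstSubord}, $\int_K^\infty z^{p\alpha_i^{(j)}}f_i^{x_i}(z)\,dz\leq C\int_K^\infty z^{p\alpha_i^{(j)}-\eta_i}\,dz$, which converges exactly when $p\alpha_i^{(j)}-\eta_i<-1$, i.e.\ $p<(\eta_i-1)/\alpha_i^{(j)}$. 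By the definition of $a$ this holds for every pair $(i,j)$ with $\alpha_i^{(j)}\neq0$ as soon as $p<a$, so the right-hand side above is finite and therefore $L(\underline{x})\in\mathcal{L}^p(\Omega;\mathbb{R})$.

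The argument is essentially careful bookkeeping, so there is no single hard step; the two places that deserve attention are the use of Lemma~\ref{LE:ExpValueIndepRVs} with the \emph{unbounded} map $g(t)=|t|^p$ --- which is legitimate because that lemma is established for arbitrary continuous $g$ and, in the non-negative case, produces an identity in $[0,+\infty]$ requiring no prior integrability --- and the behaviour of $z^{p\alpha_i^{(j)}}$ near the origin against a density $f_i^{x_i}$ that may be unbounded there, which causes no trouble precisely because $p\alpha_i^{(j)}\geq0$. The ``coupled'' regularity condition advertised in the statement is then nothing but the requirement that the product of the moment order $p$ with each variance-growth exponent $\alpha_i^{(j)}$ remain strictly below $\eta_i-1$.
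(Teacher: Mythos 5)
Your proposal is correct and follows essentially the same route as the paper's proof: reduce to $\mathbb{E}\bigl(\sigma_W^p(l_1(x_1),\dots,l_d(x_d))\bigr)$ via Lemma~\ref{LE:ExpValueIndepRVs} and the explicit Gaussian absolute-moment formula, bound the $p$-th power of the sum by a constant times the sum of $p$-th powers, factor over the independent subordinators, and split each integral at $K$ to obtain the condition $p\alpha_i^{(j)}-\eta_i<-1$. Your explicit remarks on the legitimacy of using the lemma with unbounded $g$ and on the behaviour near the origin are welcome points of care that the paper passes over silently.
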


\begin{proof}
Let $Z\sim \mathcal{N}(0,\sigma^2)$ be a real-valued, centered, normally distributed random variable with variance $\sigma^2>0$. It follows by Equation (18) in \cite{MomentsAndAbsoluteMomentsOfTheNormalDistribution} that the $p$-th absolute moment of $Z$ admits the formula
\begin{align}\label{EQ:pMomNormalDist}
\mathbb{E}(|Z|^p) = 2^\frac{p}{2} \frac{\Gamma(\frac{p + 1}{2})}{\sqrt{\pi}}\sigma^p =:C_p\sigma^p,
\end{align} 
for all $p>-1$.
Let $p\geq 1$ be a fixed number. We use Lemma \ref{LE:ExpValueIndepRVs} to calculate for the $p$-th moment of $L(\underline{x})$:
\begin{align*}
\mathbb{E}(|L(\underline{x})|^p) &= \mathbb{E}(|W(l_1(x_1),\dots,l_d(x_d))|^p) =\mathbb{E}(m(l_1(x_1),\dots,l_d(x_d))),
\end{align*}
where 
\begin{align*}
m(x_1',\dots,x_d'):=\mathbb{E}(|W(x_1',\dots,x_d')|^p) = C_p\sigma_W^p(x_1',\dots,x_d'),
\end{align*}
for $(x_1',\dots,x_d') \in \mathbb{R}_+^d$ where we used Equation \eqref{EQ:pMomNormalDist}. Hence, we obtain 
\begin{align*}
\mathbb{E}(|L(\underline{x})|^p) = C_p\mathbb{E}(\sigma_W^p(l_1(x_1),\dots,l_d(x_d))).
\end{align*}
Next, we use the tail estmations \eqref{EQ:TailEstGRFVar} and \eqref{EQ:TailEstSubord}, Hölder's inequality and the independence of the subordinators to calculate

\begin{align*}
\mathbb{E}(|L(\underline{x})|^p) & = C_p\mathbb{E}(\sigma_W^p(l_1(x_1),\dots,l_d(x_d))\\
& \leq C_p  \int_{\mathbb{R}_+^d}(\sum_{j=1}^N c_j \underline{z}^{{\underline{\alpha}}^{(j)}})^p f_1^{x_1}(z_1) \dots  f_d^{x_d}(z_d)d(z_1,\dots,z_d)\\
&\leq C(N,p) \sum_{j=1}^N c_j^p  \prod_{i=1}^d \underbrace{\int_0^{+\infty}  z_i^{p\alpha_i^{(j)}} f_i^{x_i}(z_i)dz_i}_{=:I_i^j}.
\end{align*}
It remains to show that all the integrals $I_i^j$ are finite. For $i\in\{1,\dots,d\}$ and $j\in\{1,\dots,N\}$ with $\alpha_i^{(j)}=0$ we have $I_i^j=1$. If $\alpha_i^{(j)}\neq 0$ it holds
\begin{align*}
I_i^j &= \Big(\int_0^{K} + \int_K^{+\infty}\Big)  z_i^{p\alpha_i^{(j)}} f_i^{x_i}(z_i)dz_i\\
&\leq K^{p\alpha_i^{(j)}} + C\int_K^{+\infty}  z_i^{p\alpha_i^{(j)} - \eta_i}dz_i<+\infty,
\end{align*}
where the integral in the last step is finite since $p\alpha_i^{(j)}-\eta_i < -1$ for all $i\in\{1,\dots,d\}$ and $j\in\{1,\dots,N\}$ with $\alpha_i^{(j)}\neq 0$.
\end{proof}
We close this section with three remarks on the assumptions and possible extensions of Theorem \ref{TH:PointwiseMoments}.
\begin{rem}\label{REM:PointwiseStochRegMatern}
The assumption given by Equation \eqref{EQ:TailEstGRFVar} is, for example, fulfilled for the $d$-dimensional Brownian sheet with $N=1$, $c_1=1$ and $\alpha^{(1)}=(1/2,\dots,1/2)\in \mathbb{R}_+^d$. Condition \eqref{EQ:TailEstGRFVar} also accomodates the GRFs we considered in the Lévy-Khinchin formula (see Theorem \ref{TH:LevyKhinchinFormula} and Assumption \ref{ASS:GRFCharFct}) with $N=d$, $c_1=\dots=c_d=1$ and $\underline{\alpha}^{(j)}=1/2\cdot \hat{e}_j$ for $j=1,\dots,d$, where $\hat{e}_j$ is the $j$-th unit vector in $\mathbb{R}^d$. Further, this assumption is fulfilled for any stationary GRF $W$. Indeed, in case of a stationary GRF the assumption is satisfied for $\alpha^{(1)}=(\varepsilon,0,\dots,0)$ for any $\varepsilon>0$ and, hence, Theorem \ref{TH:PointwiseMoments} yields that every moment of the corresponding evaluated subordinated GRF exists, independently of the specific choice of the subordinators. This is consistent with Remark \ref{REM:PointwiseDistStatGRF}. The assumption on the Lévy subordinators in Equation \eqref{EQ:TailEstSubord} is natural and can be verified easily in many cases, see also \cite[Assumption 3.7 and Remark 3.8]{ApproximationAndSimulation}.
\end{rem}

\begin{rem}\label{REM:PointwiseStochRegDiscrDist}
We point out that the statement of Theorem \ref{TH:PointwiseMoments} remains valid if we consider Lévy distributions with discrete probability distribution which satisfy a discrete version of \eqref{EQ:TailEstSubord}: If the GRF $W$ satisfies  \eqref{EQ:TailEstGRFVar} and the evaluated (discrete) subordinators $l_1(x_1),\dots,l_d(x_d)$ satisfy 
\begin{align}\label{EQ:TailEstSubordDiscrete}
f_i^{x_i}(k) = \mathbb{P}(l_i(x_i)=k)\leq C|k|^{-\eta_i}, \text{ for } k\geq K \text{ and } i\in\{1,\dots,d\},
\end{align}
then we obtain that $\mathbb{E}(|L(x_1,\dots,x_d)|^p)<\infty$ for $p\in [1,a)$ with the real number $a$ defined in Theorem \ref{TH:PointwiseMoments}.
\end{rem}
 
 \begin{rem}\label{REM:PointwiseStochRegGeneralLavyPr}
For the pointwise existence of moments given by Theorem \ref{TH:PointwiseMoments}, it is not necessary to restrict the subordinating processes to the class of Lévy subordinators. More generally, one could consider a GRF $W$ satisfying \eqref{EQ:TailEstGRFVar} and \textit{general} Lévy processes $l_1,\dots,l_d$ satisfying \eqref{EQ:TailEstSubord} for $|z|\geq K$. In this case, Theorem \ref{TH:PointwiseMoments} still holds for the random field $L$ defined by $L(\underline{x}) := W(|l_1(x_1)|,\dots,|l_d(x_d)|)$, for $\underline{x} = (x_1,\dots,x_d)\in [0,\mathbb{T}]_d$.
 \end{rem}

	
\section{Numerical Examples}\label{SEC:NumEx}
In this section we present numerical experiments about the theoretical results given in this paper. We verify the \lk \space formula which allows access to the pointwise distribution of a subordinated GRF and validate the formulas for the covariance functions given in Section \ref{SEC:CovFct}. Further, we introduce an approach of marginal-distribution-fitting for the subordinated GRF and present numerical experiments addressing the pointwise stochastic regularity of these fields (see Section \ref{SEC:StochReg}). 

\subsection{Verification of the \lk \space formula}
In this subsection we investigate the pointwise distribution of subordinated GRFs in order to verify the \lk \space formula given by Theorem \ref{TH:LevyKhinchinFormula}. To be more precise, we use Corollary \ref{COR:SubordGRFSumOfLevyProcesses} to obtain a pointwise distributional representation of a subordinated GRF as the sum of one-dimensional Lévy processes with transformed Lévy triplets.

Assume $L=(W(l_1(x),l_2(y)),~(x,y)\in [0,1]^2)$ is a subordinated GRF where the GRF $W$ satisfies Assumption \ref{ASS:GRFCharFct} and the two subordinators $l_1$ and $l_2$ are characterized by the Lévy triplets $(\gamma_k,0,\nu_k)$ for $k=1,2$. It follows by Corollary \ref{COR:SubordGRFSumOfLevyProcesses} that $L$ admits the distributional representation
\begin{align}\label{EQ:NumExPWDist}
L(x,y)\stackrel{\mathcal{D}}{=}\tilde{l}_1(x) +\tilde{l}_2(y),
\end{align}
for $(x,y)\in [0,1]^2$. Here, the processes $\tilde{l}_k$ on $[0,1]$ are independent Lévy processes with triplets $(0, \sigma^2\gamma_k/2, \nu_k^\#)$ for $k=1,2$ in the sense of the one-dimensional \lk \space formula  (see Theorem \ref{TH:LevyKhinchinFormula1d}) and the Lévy measure $\nu_k^\#$ is defined by
\begin{align*}
\nu_k^\#([a,b]):=\int_0^\infty \int_a^b\frac{1}{\sqrt{2\pi\sigma^2t}}\exp\left(-\frac{x^2}{2\sigma^2t}\right)dx\,\nu_k(dt),
\end{align*}
$a,b\in \mathbb{R}$ and $k=1,2$.
In order to validate Equation \eqref{EQ:NumExPWDist} we choose specific spatial points and sample the subordinated GRF $L$ to compare it with the distribution given by the right hand side of \eqref{EQ:NumExPWDist}. We use two different methods to approximate the distribution of the Lévy processes on the right hand side of \eqref{EQ:NumExPWDist}: the compound Poisson approximation (CPA) (see \cite[Section 8.2.1]{LevyProcessesInFinance}) and the Fourier inversion method for Lévy processes (see \cite{NoteOnTheInversionTheorem} and \cite{ApproximationAndSimulation}) which allows for a direct approximation of the density of the right hand side of \eqref{EQ:NumExPWDist}.

\subsubsection{First approach: CPA}\label{SUBSUBSEC:CPA}\label{SUBSUBSEC:NumExCPA}
We recall that a $Gamma(a_G,b_G)$ process $l_G$ has independent $Gamma$-distributed increments and $l_G(t)$ follows a $Gamma(a_G\cdot t,b_G)$ distribution. In our first example we use $Gamma(4,12)$ processes to subordinate the GRF $W$ which is defined by $W(x,y)=\sqrt{x+y}\,\tilde{W}(x,y)$, for $(x,y)\in [0,1]^2$, where $\tilde{W}$ is a Matérn-1.5-GRF with pointwise standard deviation $\sigma=2$ (see Remark \ref{REM:StatFieldsExtensionLKFormula}). We fix the evaluation point $(x,y)=(1,1)$ and use the CPA method to obtain samples of the Lévy process on the right hand side of~\eqref{EQ:NumExPWDist} which can then be compared with samples of the subordinated GRF. Figure~\ref{FIG:CPACompareHist} (left and middle) shows the corresponding histograms for $10.000$ samples of each distribution.

	\begin{figure}[ht]
	\centering
	\subfigure{\includegraphics[scale=0.32]{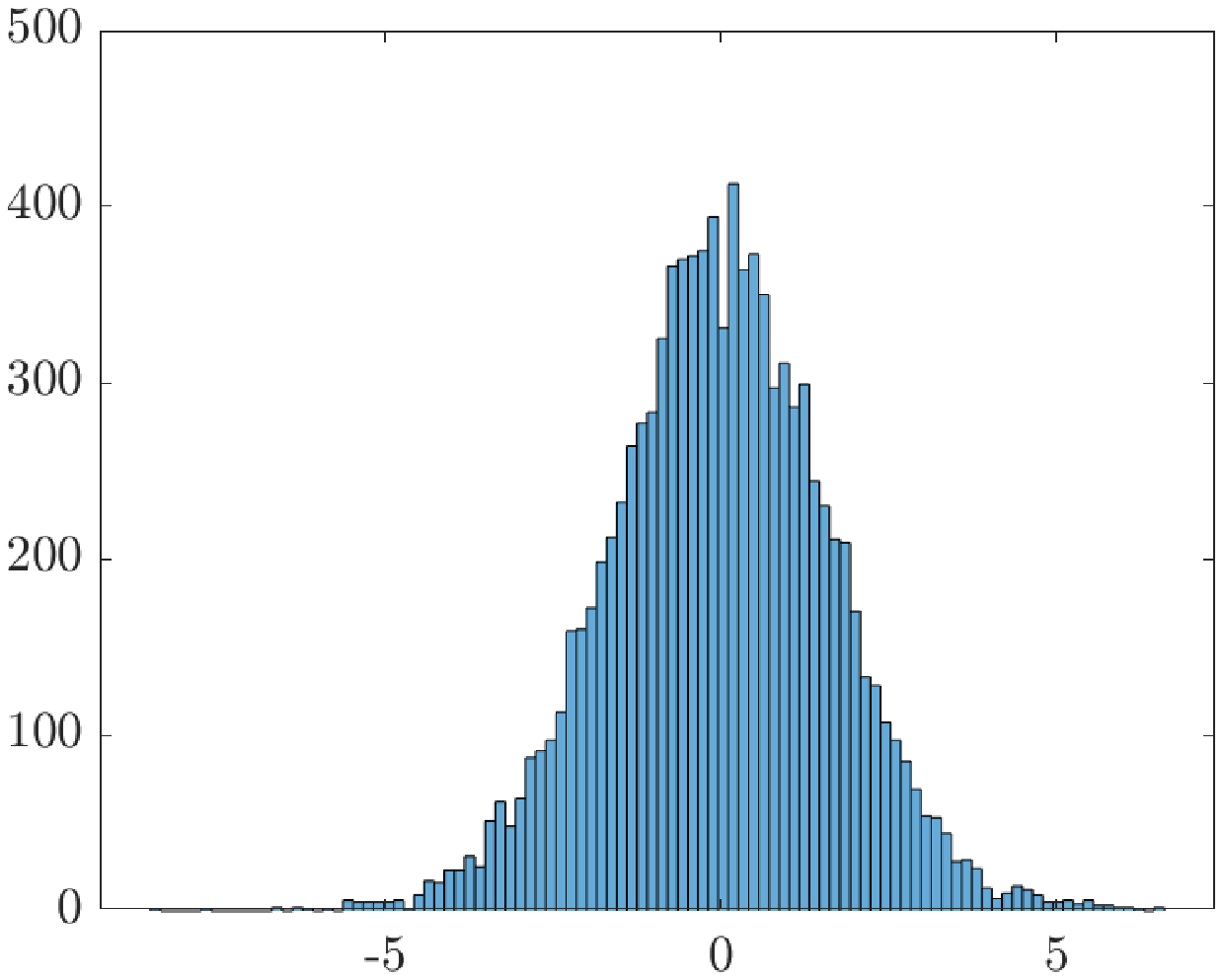}}
	\subfigure{\includegraphics[scale=0.32]{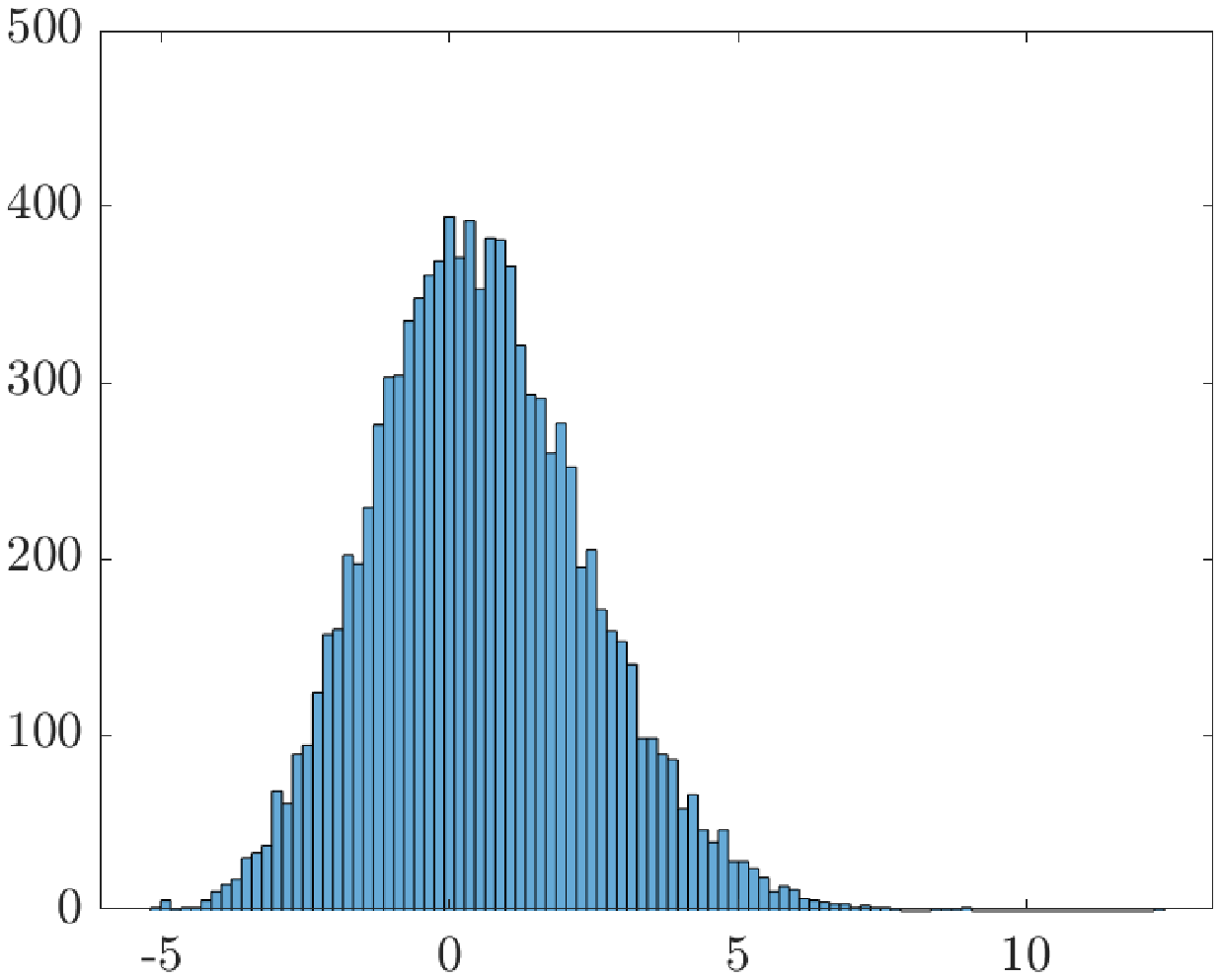}}
    \subfigure{\includegraphics[scale=0.32]{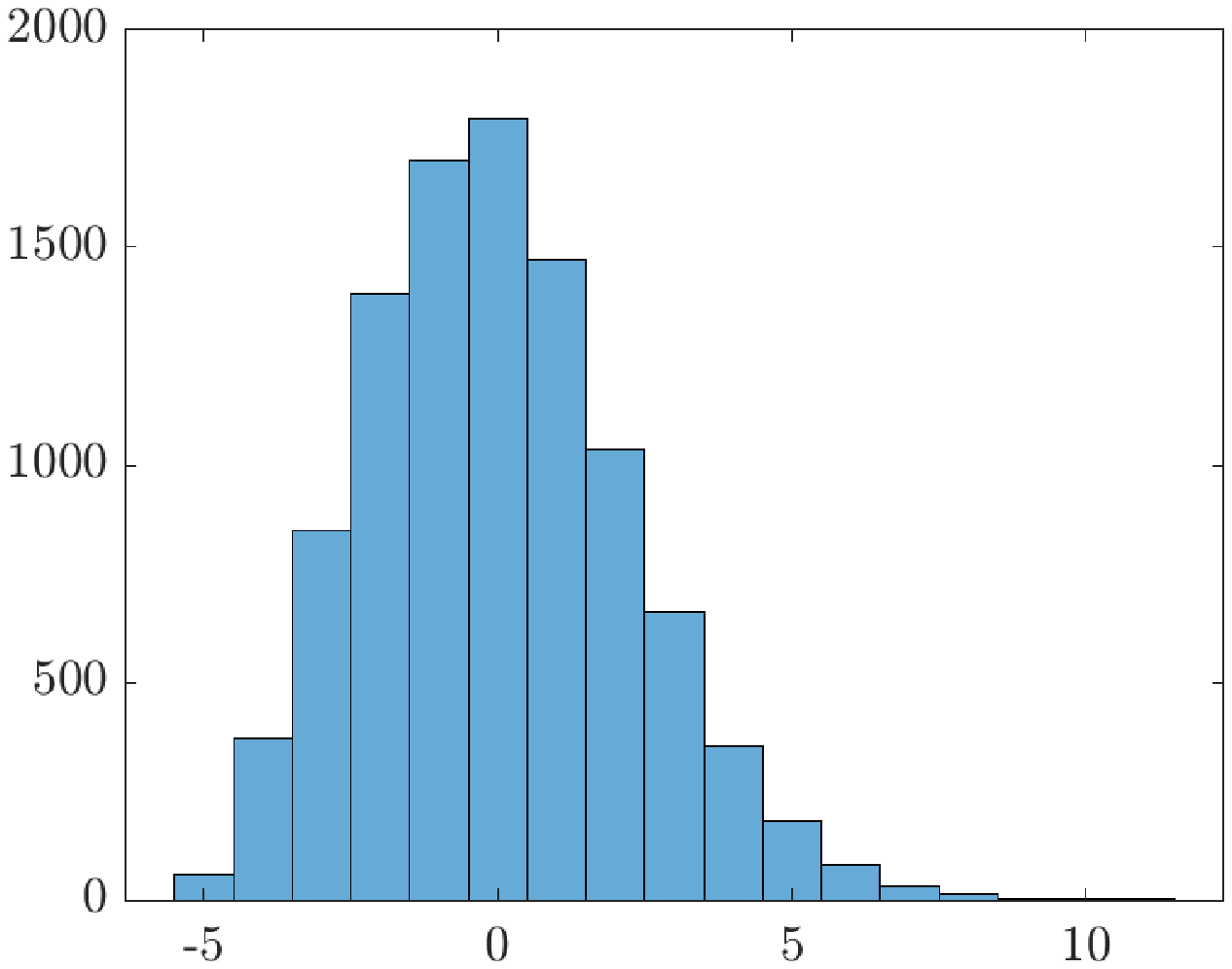}}
    \caption{Samples of the subordinated GRF $W(l_1(1),l_2(1))$ (left), the sum of the corresponding tranformed Lévy processes $\tilde{l}_1(1) + \tilde{l}_2(1)$ using the CPA method (middle) and $10.000$ Samples of $Poiss(5)-5$ (right).}
    \label{FIG:CPACompareHist}
    \end{figure}
Visually, we obtain a relatively accurate fit of the distributions since the histograms have similar characteristics. However, in contrast to the left histogram, the distribution corresponding to the middle histogram in Figure \ref{FIG:CPACompareHist} is not symmetric. This is not due to the specific choice of the CPA parameters but due to the method itself since in CPA a Lévy process is approximated essentially by the sum of compensated Poisson processes which is never symmetric as one can see in the right histogram of Figure~\ref{FIG:CPACompareHist}.


We conclude that, even if the histograms indicate that the underlying distributions match, the CPA is not perfectly suitable to approximate the pointwise marginal distribution of the Lévy processes given on the right hand side of Equation \eqref{EQ:NumExPWDist} since the CPA cannot image the symmetry properties of this distribution.

\subsubsection{Second approach: Fourier inversion method}\label{SUBSUBSEC:NumExPWDistFI}

The second approach is to approximate the density function of the right hand side of \eqref{EQ:NumExPWDist} by the Fourier inversion (FI) method (see \cite{NoteOnTheInversionTheorem} and \cite{ApproximationAndSimulation}) and compare it with samples of the subordinated GRF. Figure \ref{FIG:FIGamma412Hist} illustrates the results for this approach where we used the evaluation point $(x,y)=(1,1)$, the same GRF as in Subsection \ref{SUBSUBSEC:CPA}, $Gamma(4,12)$ subordinators and $100.000$  samples of the subordinated GRF.

\begin{figure}[ht]
	\centering
	\subfigure{\includegraphics[scale=0.4]{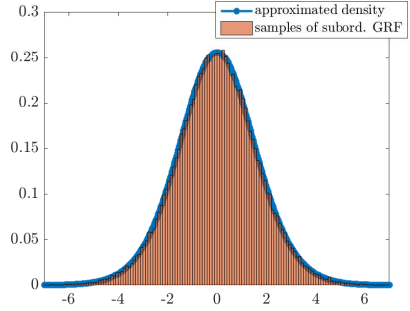}}
    \caption{Samples of $Gamma(4,12)$-subordinated GRF and approximated density (FI).}
    \label{FIG:FIGamma412Hist}
    \end{figure}

As one can see in Figure \ref{FIG:FIGamma412Hist}, the pointwise distribution of the subordinated GRF perfectly matches the approximated density of the right hand side of \eqref{EQ:NumExPWDist}. We want to confirm this by a Kolmogorov-Smirnov-Test (see for example \cite[Section VII.4]{PestmanMathematicalStatistics}). Figure \ref{FIG:KSFIGam412} illustrates how the empirical CDF, obtained by sampling the subordinated GRF, converges to the target CDF which is computed by the Fourier inversion method using Equation \eqref{EQ:NumExPWDist}. A Kolmogorov-Smirnov-test with $10.000$ samples and a level of significance of $5\%$ is passed.

\begin{figure}[ht]
	\centering
	\subfigure{\includegraphics[scale=0.33]{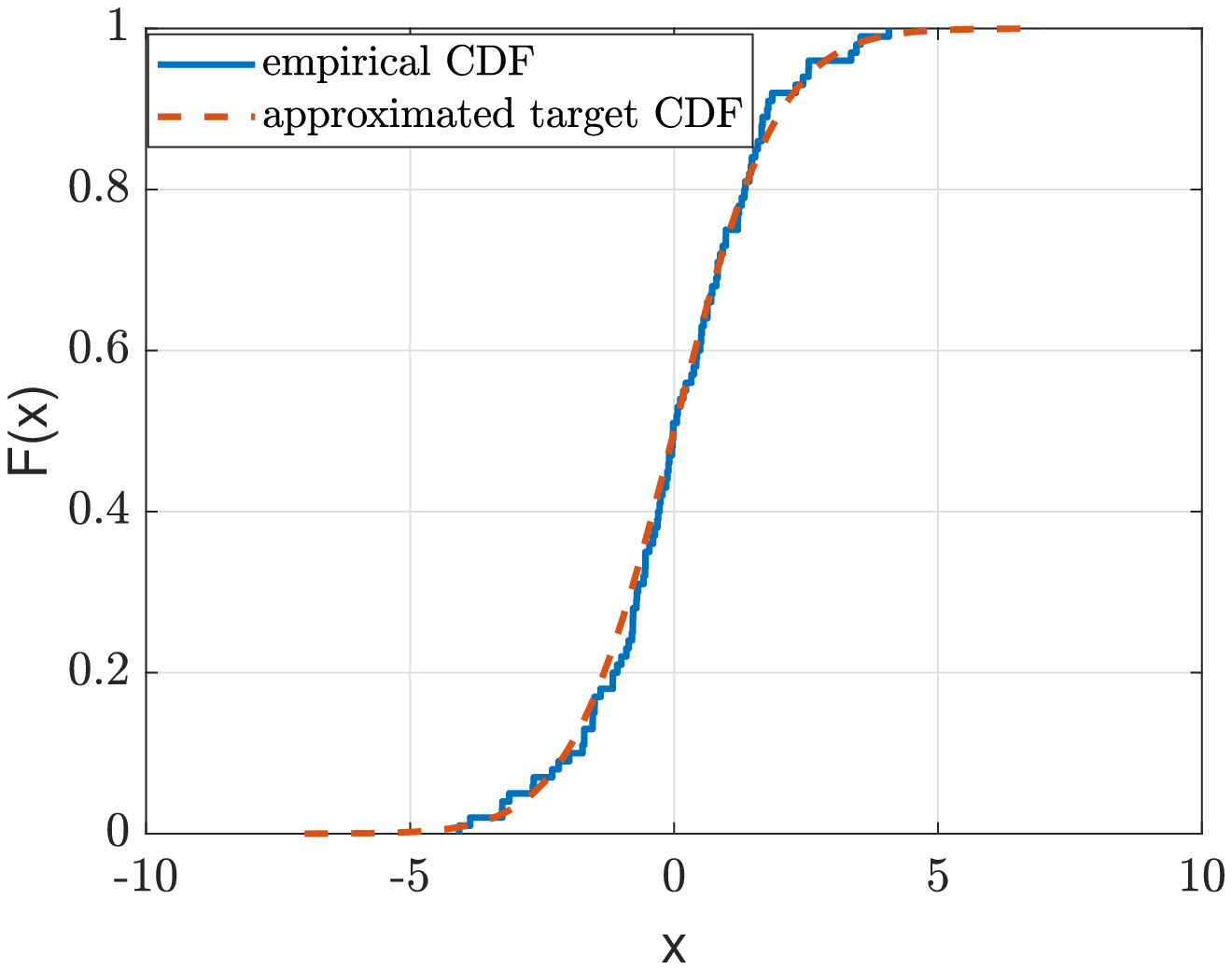}}
	\subfigure{\includegraphics[scale=0.33]{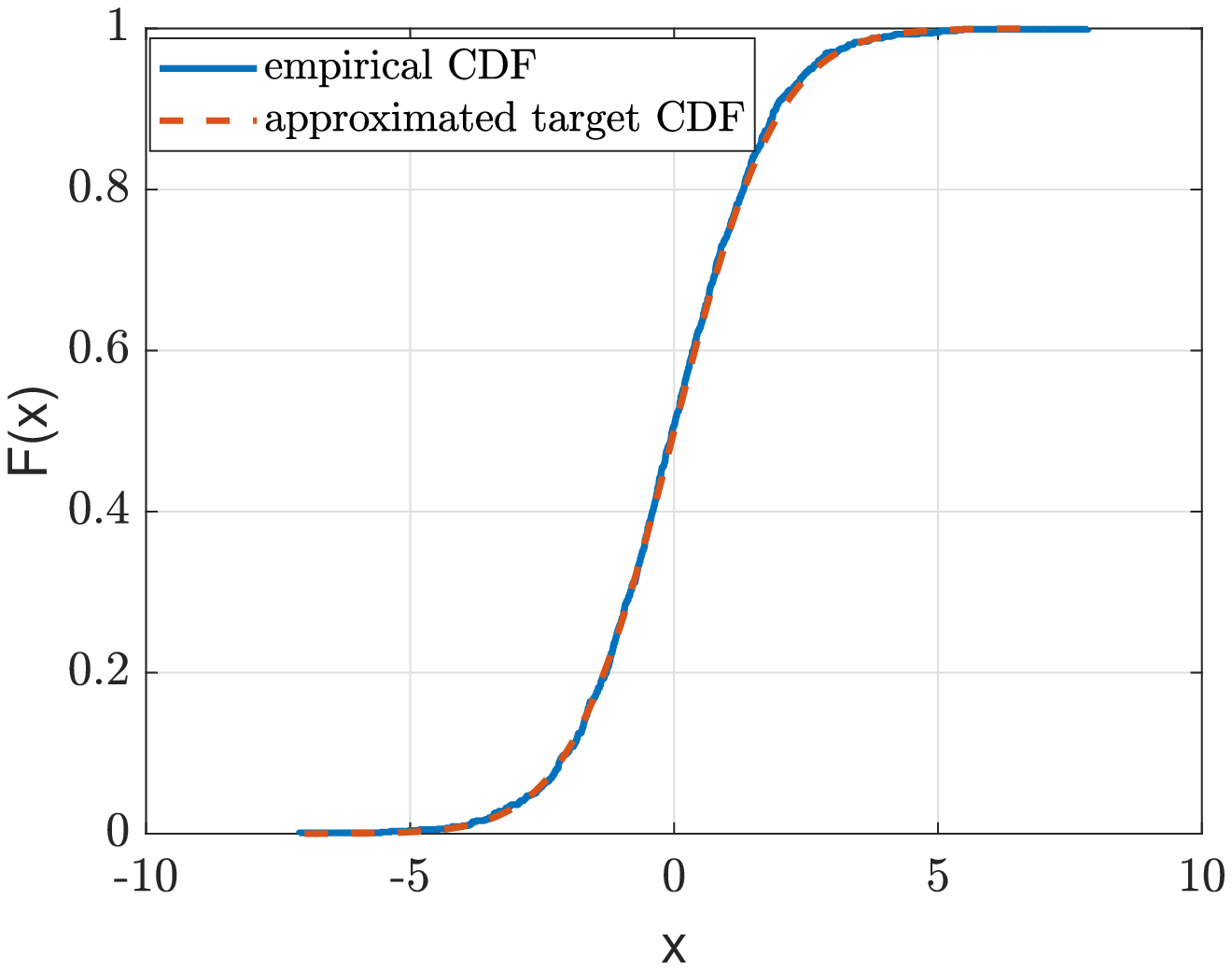}}
	\subfigure{\includegraphics[scale=0.33]{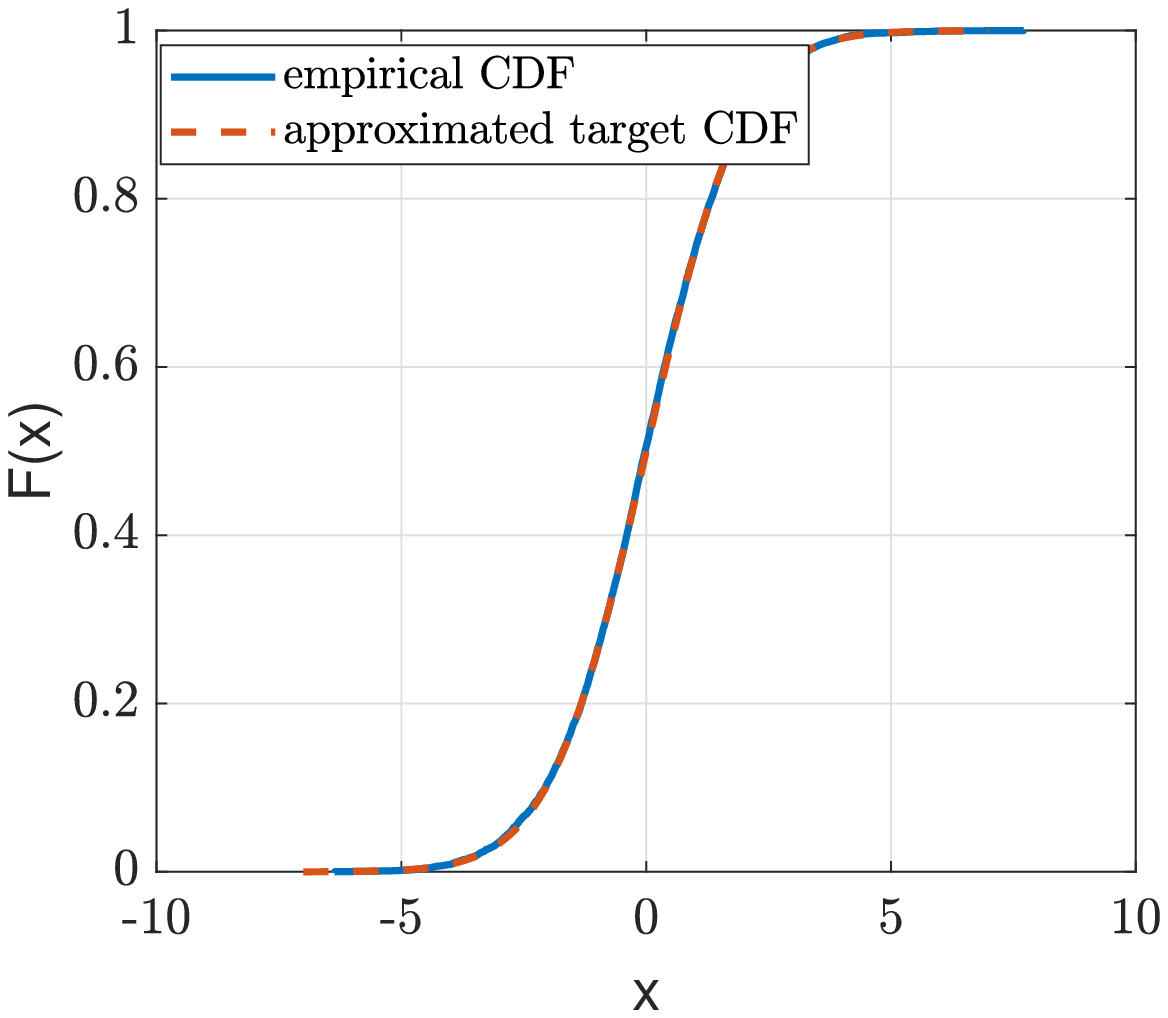}}
    \caption{Approximated target CDF (FI) vs. empirical CDF using $100$ (left), $1.000$ (middle) and $10.000$ (right) samples of the subordinated GRF with $Gamma(4,12)$ subordinators.}
    \label{FIG:KSFIGam412}
    \end{figure}

In the next experiment we use a modified subordinator, which results in a less smooth pointwise density of the subordinated GRF. We repeat the above experiment with $Gamma(0.5,10)$ subordinators where the GRF, the evaluation point and the sample size remain unchanged. Figure \ref{FIG:FIG0510Hist} shows $100.000$ samples of the subordinated GRF and the density of the process given by the right hand side of Equation \eqref{EQ:NumExPWDist} computed via the Fourier Inversion method.

\begin{figure}[ht]
	\centering
   \subfigure{\includegraphics[scale=0.4]{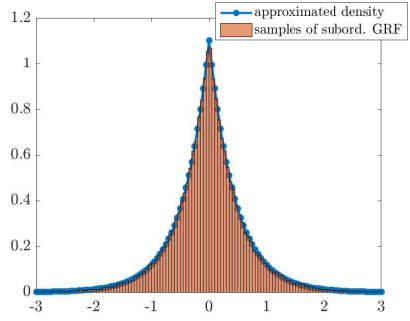}}
    \caption{Samples of $Gamma(0.5,10)$-subordinated GRF and approximated density (FI).}
    \label{FIG:FIG0510Hist}
    \end{figure}

As in the first experiment, the results given by Figure \ref{FIG:FIG0510Hist} indicate that the pointwise distribution of the subordinated GRF matches the approximated density of the right hand side of \eqref{EQ:NumExPWDist}. Figure \ref{FIG:KSFIGam0510} illustrates how the empirical CDF, obtained by sampling of the subordinated GRF, converges to the target CDF of the right hand side of Equation \eqref{EQ:NumExPWDist}, which is computed by the Fourier inversion method. A Kolmogorov-Smirnov-test with a level of significance of $5\%$ is passed.

\begin{figure}[ht]
	\centering
	\subfigure{\includegraphics[scale=0.3]{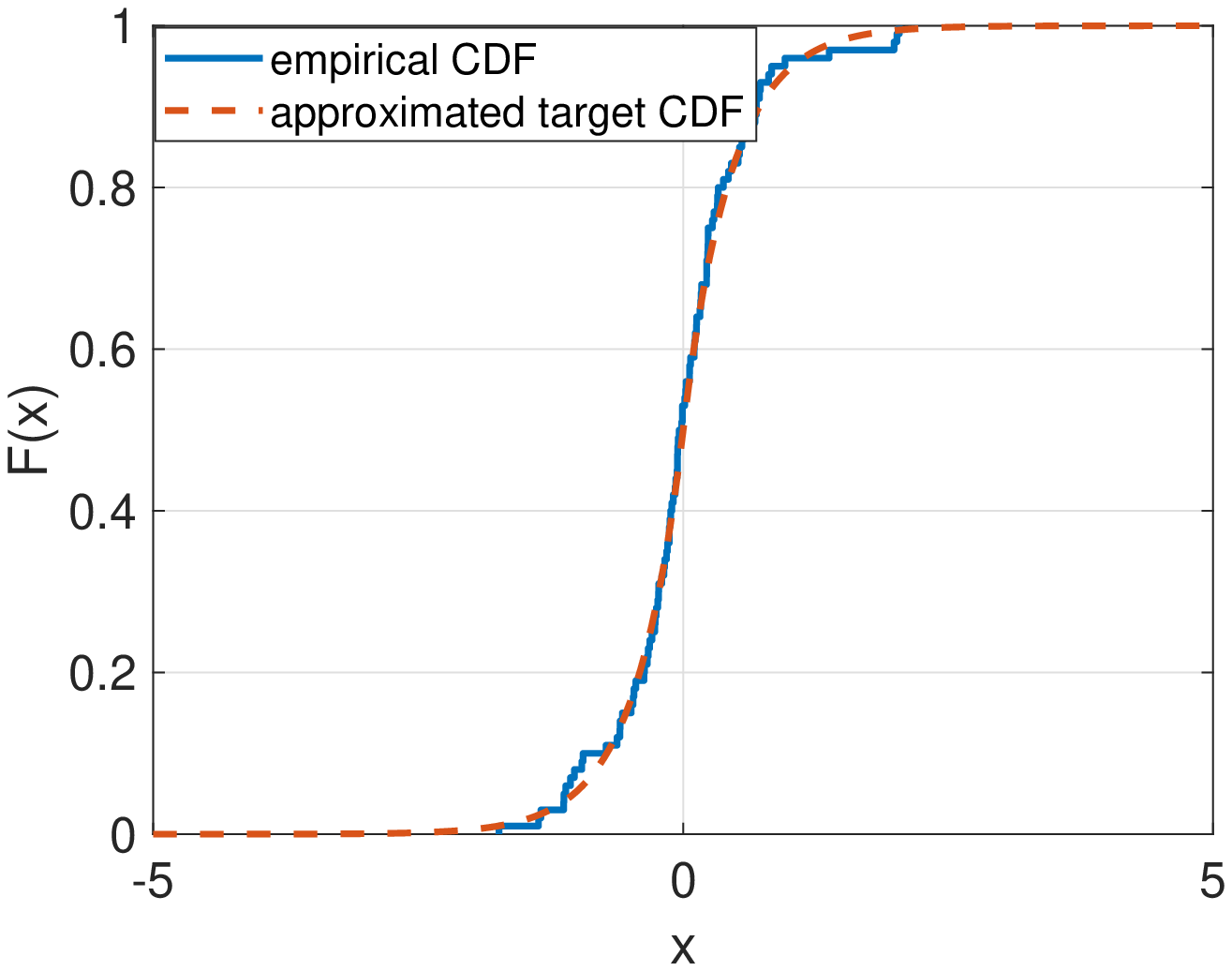}}
   \subfigure{\includegraphics[scale=0.3]{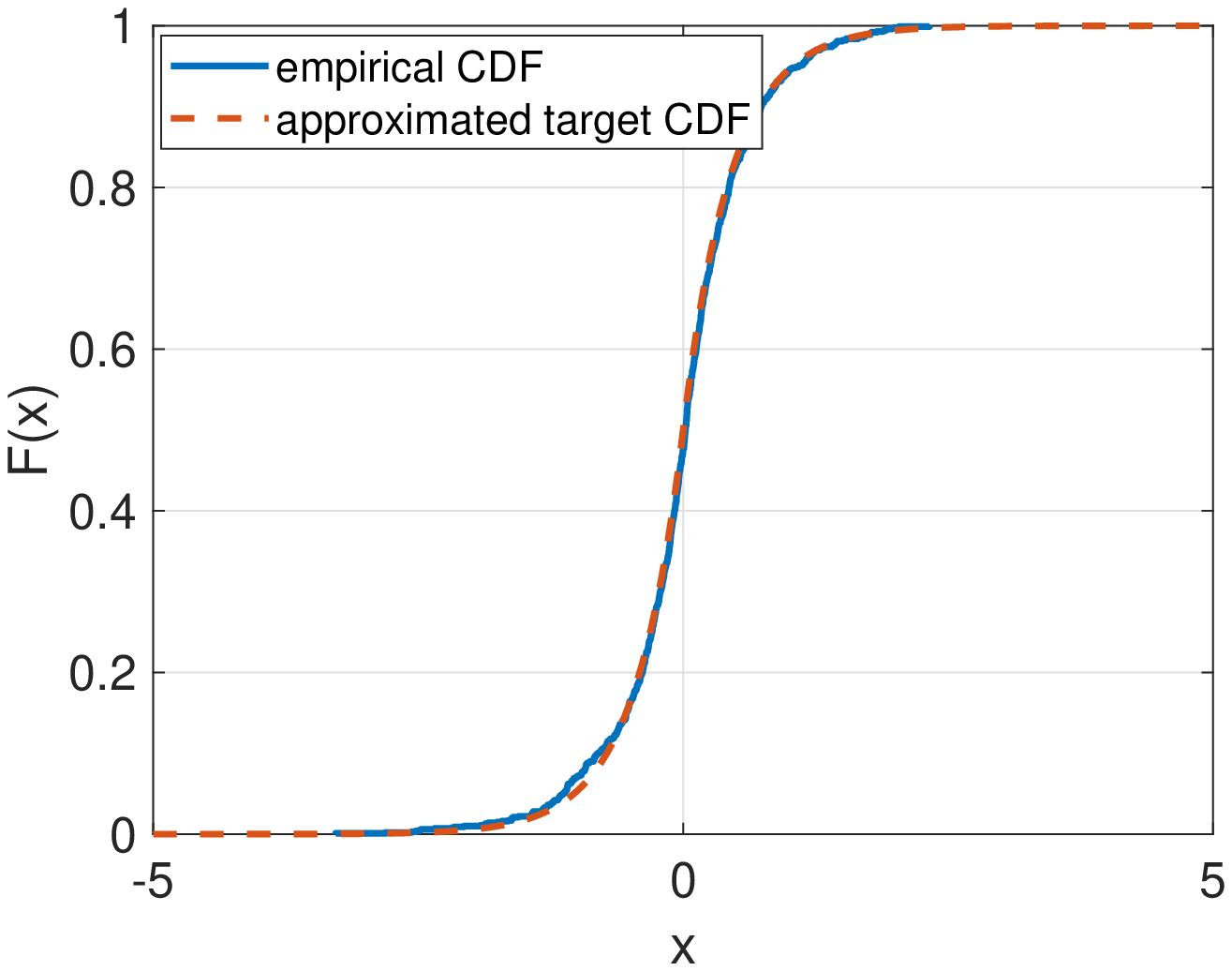}}
	\subfigure{\includegraphics[scale=0.3]{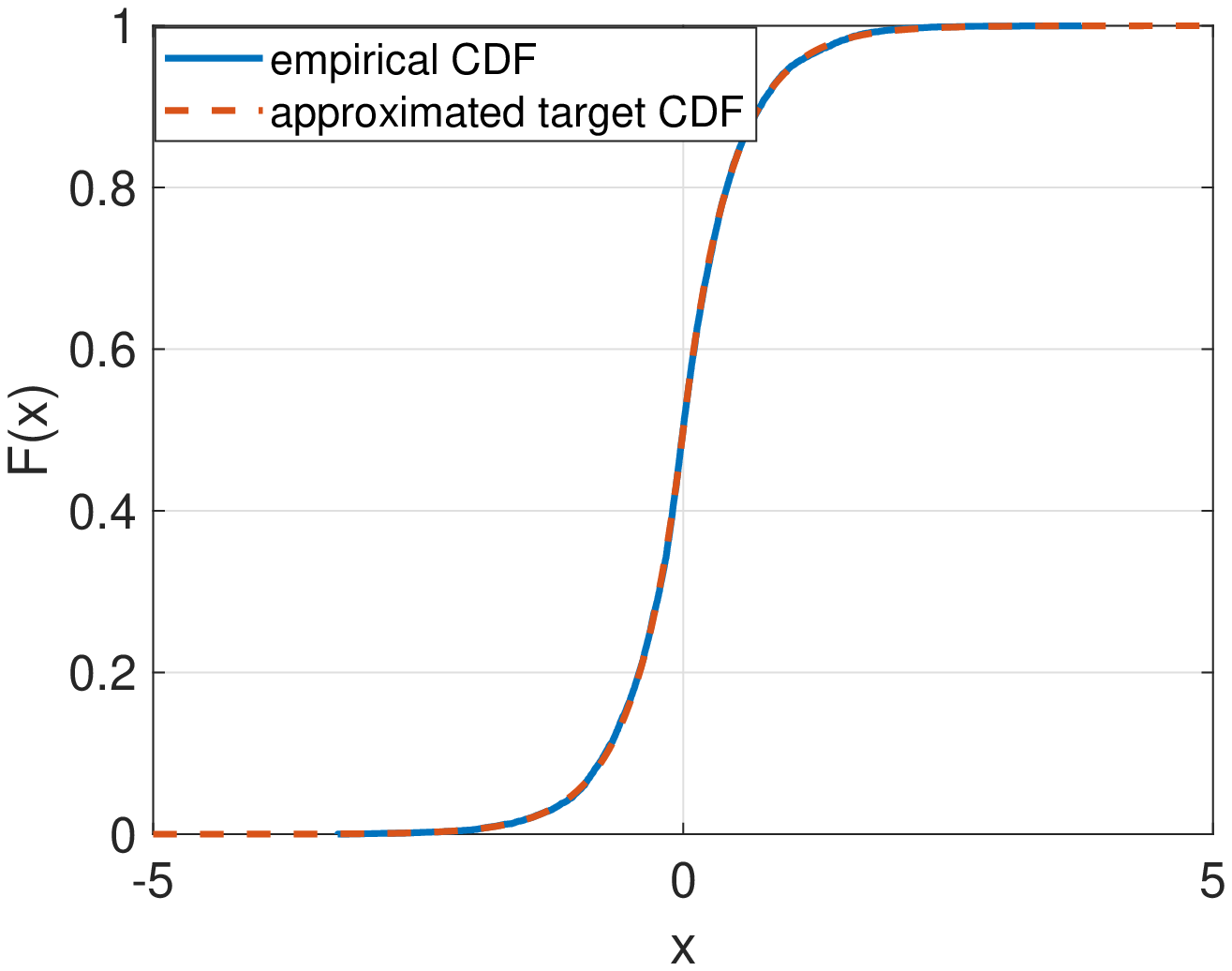}}
    \caption{Approximated target CDF (FI) vs. empirical CDF using $100$ (left), $1.000$ (middle) and $10.000$ (right) samples of the subordinated GRF with $Gamma(0.5,10)$ subordinators.}
    \label{FIG:KSFIGam0510}
    \end{figure}

\subsection{Verification of Covariance formulas}\label{SUBSEC:NumExCovFormula}
In Section \ref{SEC:CovFct} we derived semi-explicit formulas for the covariance function of centered subordinated GRFs. In the following subsection we validate these fomulas by comparing them with numerically estimated covariances using specific GRFs and subordinators. We use both, stationary and non-stationary underlying GRFs.

In our numerical experiments, we evaluate the integrals appearing in the derived formulas for the covariance function via the trapezoidal rule. Further, we estimate the empirical covariance of the subordinated GRF by a standard Monte Carlo (MC) estimation: for spatial points $(x,y),~(x',y')\in [0,\mathbb{T}]_2$ and a sample number $M\in \mathbb{N}$ we estimate
\begin{align*}
q_L((x,y),(x',y'))\approx \frac{1}{M}\sum_{i=1}^M W(l_1(x),l_2(y))^{(i)}\cdot W(l_1(x'),l_2(y'))^{(i)},
\end{align*}
where $(W(l_1(x),l_2(y))^{(i)},~i=1,\dots,M)$ (resp. $(W(l_1(x'),l_2(y'))^{(i)},~i=1,\dots,M)$ are i.i.d. copies of the subordinated GRF evaluated at $(x,y)$ (resp. $(x',y')$). In order to interpret our results we state the following standard result.

\begin{lemma}\label{LE:MCConv}
Let $M\in \mathbb{N}$ and $Z\in \mathcal{L}^2(\Omega)$ be a random variable with finite second moment. The standard Monte Carlo estimator $E_M(Z):=1/M\sum_{i=1}^M Z^{(i)}$, with i.i.d. copies $(Z^{(i)},~i=1,\dots,M)$ of $Z$, satisfies
\begin{align*}
\|\mathbb{E}(Z) - E_M(Z)\|_{\mathcal{L}^2(\Omega)} \leq \frac{\|Z\|_{\mathcal{L}^2(\Omega)}}{\sqrt{M}}.
\end{align*} 
\end{lemma}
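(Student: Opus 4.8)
The plan is to compute the $\mathcal{L}^2(\Omega)$-error of the estimator directly by expanding the square and exploiting the i.i.d.\ structure of the sample. First I would observe that by linearity of the expectation, $\mathbb{E}(E_M(Z)) = \frac{1}{M}\sum_{i=1}^M \mathbb{E}(Z^{(i)}) = \mathbb{E}(Z)$, so the estimator is unbiased and hence
\begin{align*}
\|\mathbb{E}(Z) - E_M(Z)\|_{\mathcal{L}^2(\Omega)}^2 = \var(E_M(Z)) = \frac{1}{M^2}\var\Big(\sum_{i=1}^M Z^{(i)}\Big).
\end{align*}
Since the copies $Z^{(1)},\dots,Z^{(M)}$ are independent, the variance of the sum is the sum of the variances, and since each $Z^{(i)}$ has the same distribution as $Z$, we get $\var(\sum_{i=1}^M Z^{(i)}) = M\var(Z)$, whence $\var(E_M(Z)) = \var(Z)/M$.

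It then remains only to bound $\var(Z)$ by $\|Z\|_{\mathcal{L}^2(\Omega)}^2 = \mathbb{E}(Z^2)$. This is immediate from $\var(Z) = \mathbb{E}(Z^2) - \mathbb{E}(Z)^2 \le \mathbb{E}(Z^2)$, and both quantities are finite by the assumption $Z\in\mathcal{L}^2(\Omega)$. Combining the two displays gives
\begin{align*}
\|\mathbb{E}(Z) - E_M(Z)\|_{\mathcal{L}^2(\Omega)}^2 = \frac{\var(Z)}{M} \le \frac{\mathbb{E}(Z^2)}{M} = \frac{\|Z\|_{\mathcal{L}^2(\Omega)}^2}{M},
\end{align*}
and taking square roots yields the claimed estimate.

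There is essentially no obstacle here: the statement is the textbook Monte Carlo rate, and the only points requiring care are (i) making sure the square-integrability of $Z$ is invoked to guarantee that all the moments written down are finite and well-defined, and (ii) correctly using independence (not merely pairwise uncorrelatedness, though that would suffice) to split the variance of the sum. No result beyond elementary properties of variance and the definition of the $\mathcal{L}^2(\Omega)$-norm is needed.
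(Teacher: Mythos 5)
Your proof is correct and is exactly the standard argument this lemma relies on (the paper states it as a standard result and omits the proof): unbiasedness of the estimator reduces the $\mathcal{L}^2$-error to $\var(E_M(Z))=\var(Z)/M$ by independence, and $\var(Z)\le\mathbb{E}(Z^2)=\|Z\|_{\mathcal{L}^2(\Omega)}^2$ gives the bound. Nothing is missing.
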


\subsubsection{The stationary case}\label{SUBSUBSEC:NumExCovStat}

In order to verify Lemma \ref{LE:CovFctStatCase}, we compute a Monte Carlo estimation for the covariance of a subordinated (stationary) GRF and compare it with the right hand side of the formula given by Lemma \ref{LE:CovFctStatCase}. We use a Matérn-1.5-GRF with parameters $\sigma=r=0.5$. Further, $l_j$ is a $Gamma(a_G^{(j)},b_G^{(j)})$-subordinator for $j=1,2$, where we vary the parameters $\underline{a}_G=(a_G^{(1)},a_G^{(2)})$ and $\underline{b}_G=(b_G^{(1)},b_G^{(2)})$. We consider the covariance of the field at different points to cover different situations: points with small distance, points, which are equal in one coordinate, and points with large distance. 


We use the 500 independent MC runs to approximate the RMSE:
\begin{align*}
RMSE^2&:=\|q_L((x,y),(x',y')) - E_M\big(W(l_1(x),l_2(y))\cdot W(l_1(x'),l_2(y'))\big)\|_{\mathcal{L}^2(\Omega)}^2\\
   &\approx \frac{1}{500}\sum_{i=1}^{500} \Big(q_L((x,y),(x',y')) - E_M^{(i)}\big(W(l_1(x),l_2(y))\cdot W(l_1(x'),l_2(y'))\big)\Big)^2 ,
\end{align*} 
where $(E_M^{(i)}(W(l_1(x),l_2(y)) W(l_1(x'),l_2(y'))),~i=1,\dots,500)$ denote the 500 independent MC estimations.
The results are presented in Figure \ref{FIG:CovEstStatMCConv}, which shows the expected Monte Carlo convergence of order $\mathcal{O}(1/\sqrt{M})$ in all considered situations (see Lemma \ref{LE:MCConv}).

\begin{figure}[ht]
	\centering
	\subfigure{\includegraphics[scale=0.3]{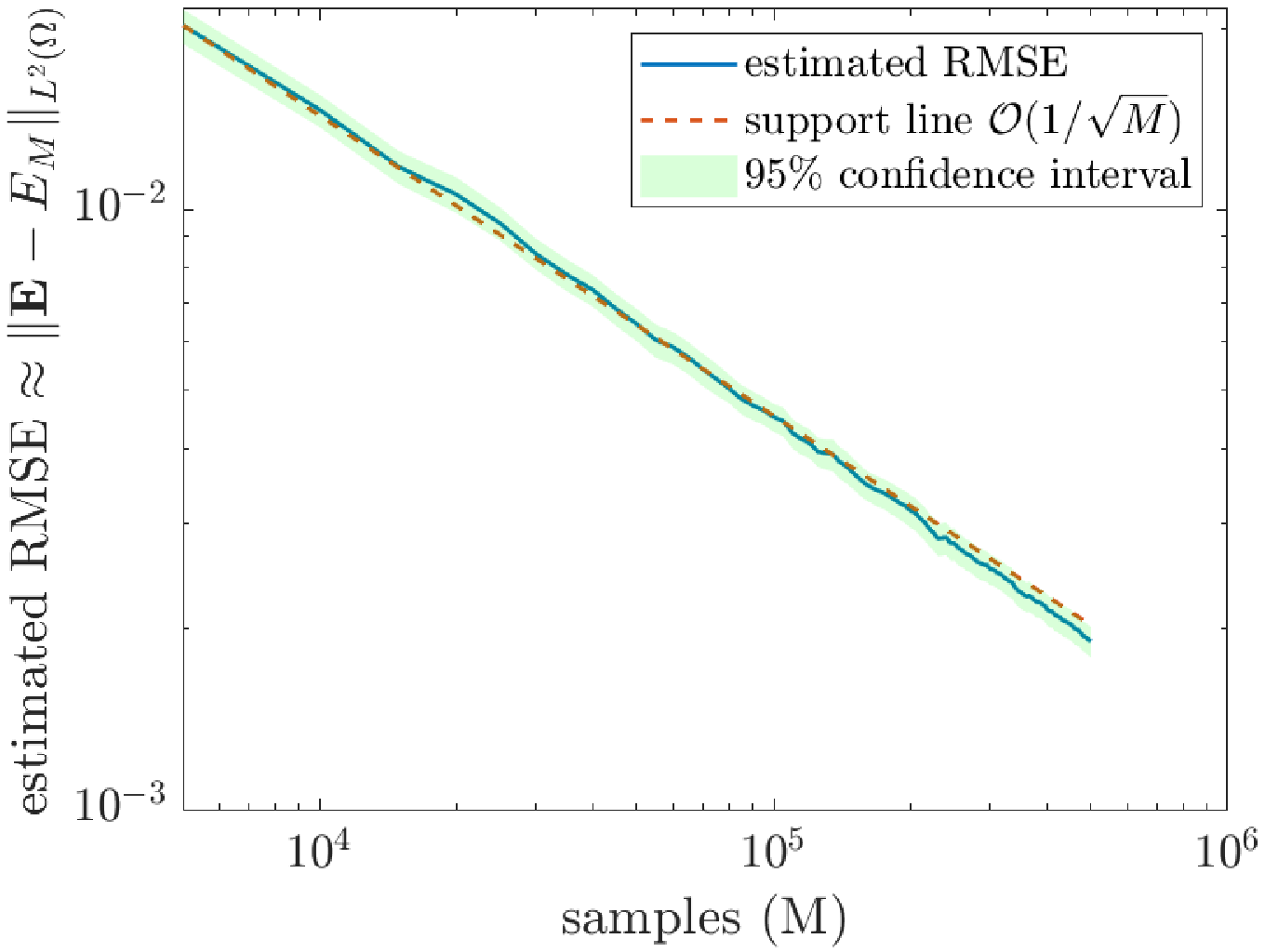}}
\subfigure{\includegraphics[scale=0.3]{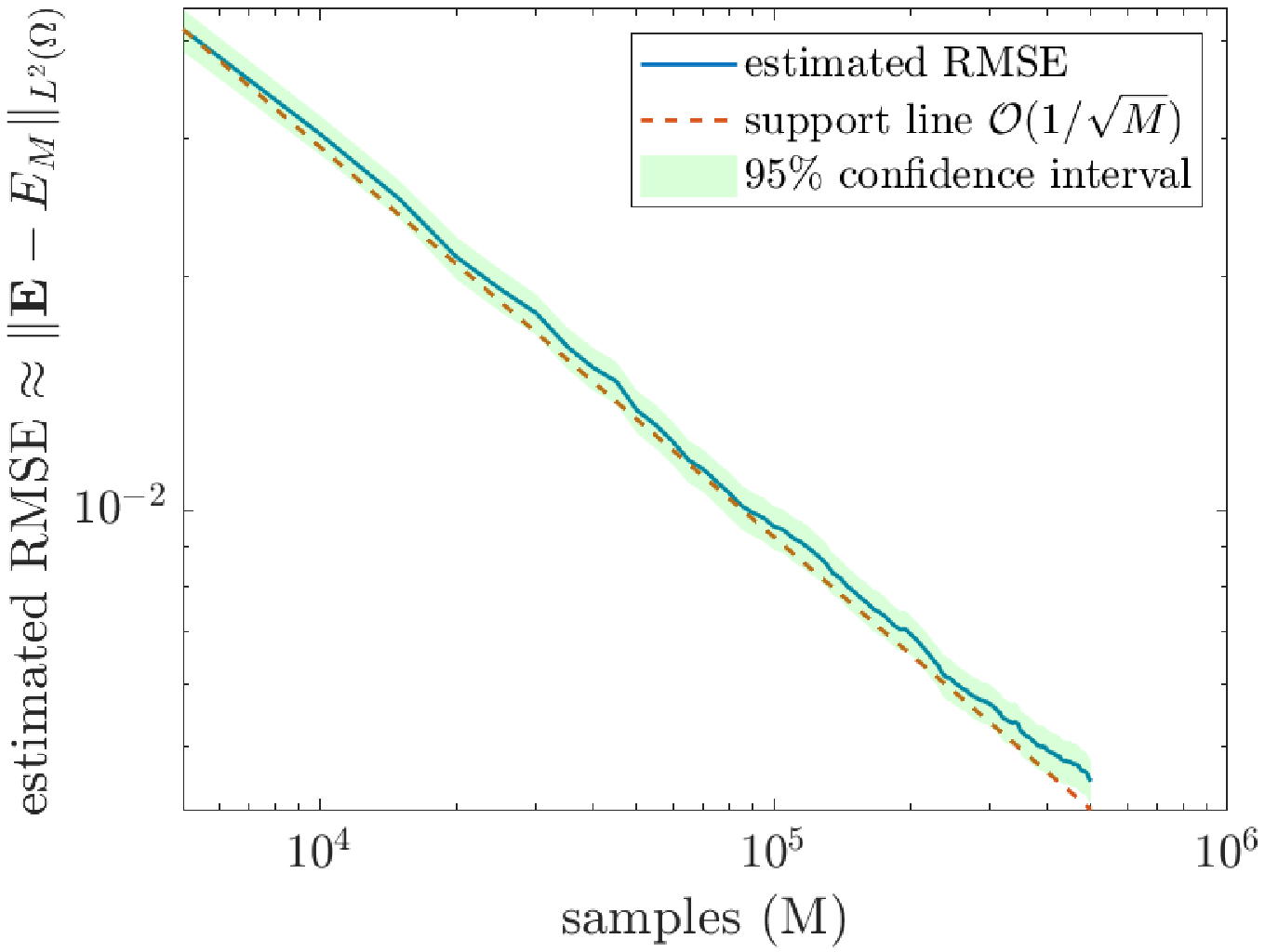}}
\subfigure{\includegraphics[scale=0.3]{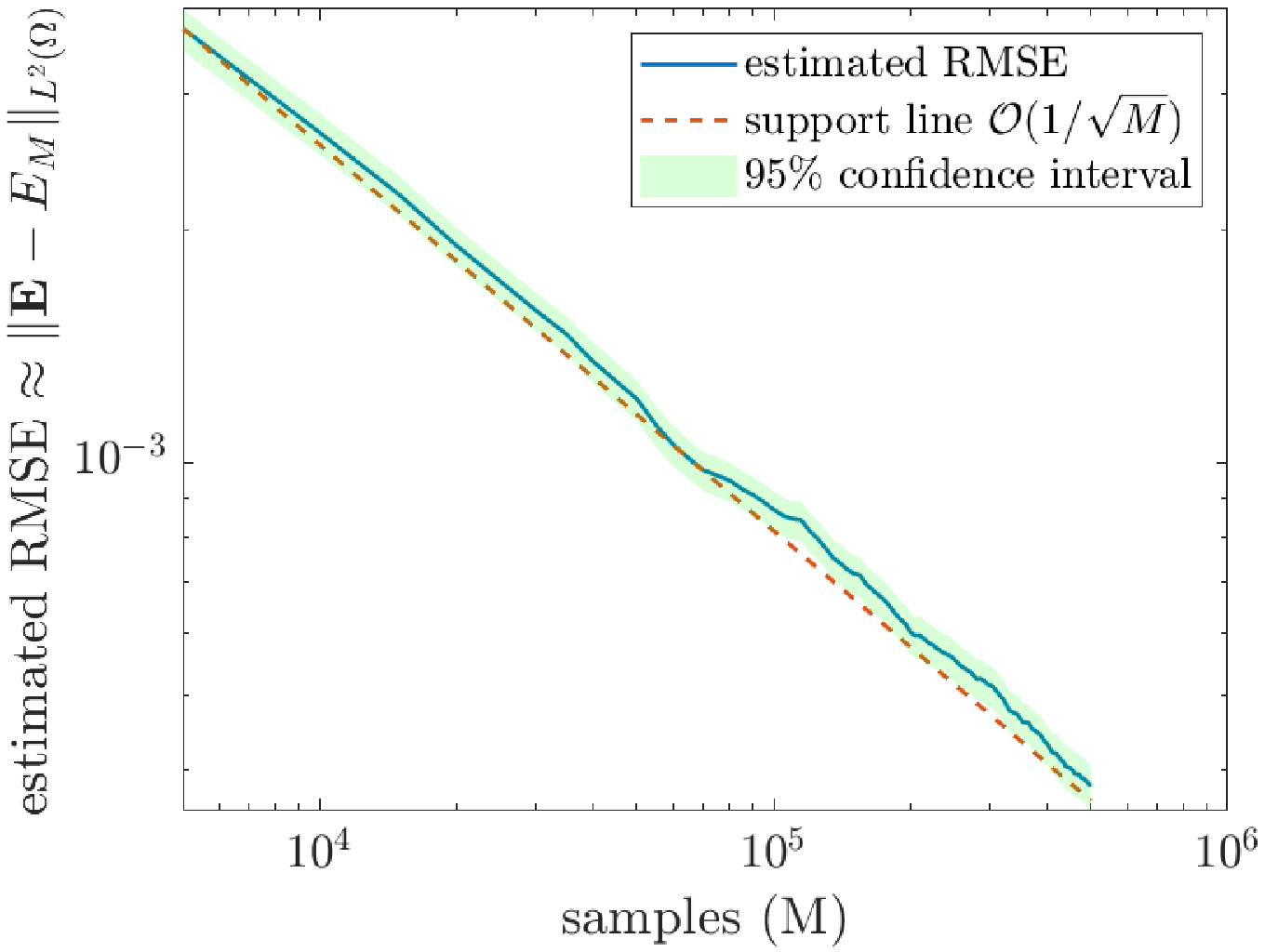}}
\caption{Convergence of the MC estimation of the covariance for different points and parameters: $q_L((1,1),(2,2))$ with $\underline{a}_G=(4,4),~\underline{b}_G=(12,12),~\sigma=1,~r=4$ (left), $q_L((0.5,1),(1.5,1))$ with $\underline{a}_G=(5,5),~\underline{b}_G=(10,10)~\sigma=1.5,~r=2$ (middle), $q_L((2,3),(6,4))$ with $\underline{a}_G=(1,2),~\underline{b}_G=(7,5)~\sigma=0.5,~r=0.5$ (right).}
\label{FIG:CovEstStatMCConv}
\end{figure}

\subsubsection{The non-stationary case}
In order to validate the covariance formula for a non-stationary underlying GRF, we choose the GRF to be a Brownian sheet on two dimensions, i.e. a centered GRF $W$ with $\mathbb{E}(W(x,y)W(x',y'))=\min(x,x')\cdot \min(y,y')$. Note that the Brownian sheet has continuous paths (see e.g. \cite[Theorem 1.4.4]{RandomFieldsAndGeometry}). As in Subsection \ref{SUBSUBSEC:NumExCovStat}, we assume the Lévy processes $l_j$ to be $Gamma(a_G^{(j)},b_G^{(j)})$ subordinators, for $j=1,2$, with different parameters $\underline{a}_G=(a_G^{(1)},a_G^{(2)})$ and $\underline{b}_G=(b_G^{(1)},b_G^{(2)})$. We consider the covariance of the field at different points: points with smaller and larger distance, points, which are equal in one coordinate and points, which are equal in both coordinates (pointwise variance). 



As in the first example, we use the 500 independent MC runs to estimate the RMSE (see Subsection \ref{SUBSUBSEC:NumExCovStat}). Figure \ref{FIG:CovEstNonStatMCConv} shows the expected convergence of order $\mathcal{O}(1/\sqrt{M})$ in all the considered situations (see Lemma \ref{LE:MCConv}).

\begin{figure}[ht]
	\centering
	\subfigure{\includegraphics[scale=0.24]{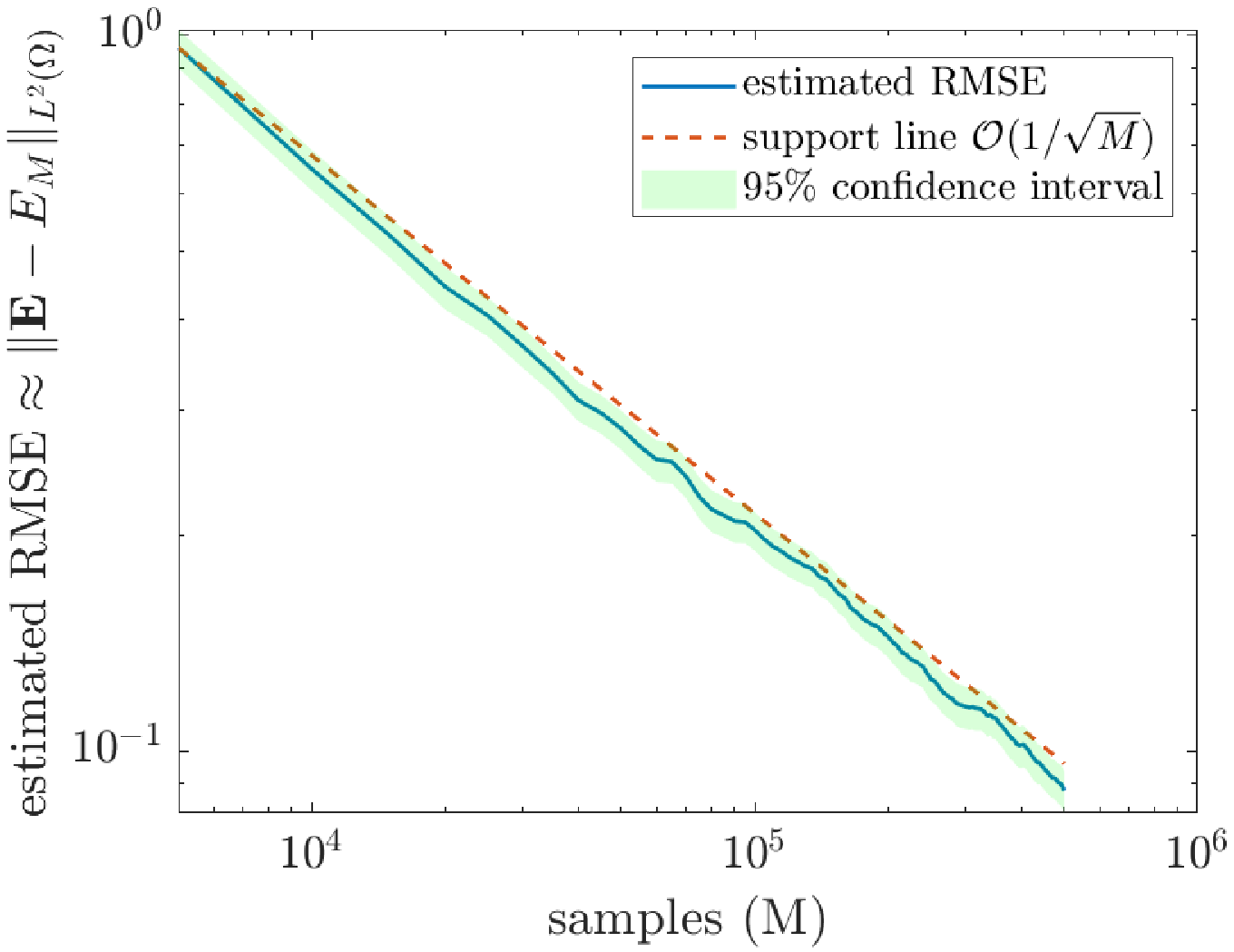}}
\subfigure{\includegraphics[scale=0.24]{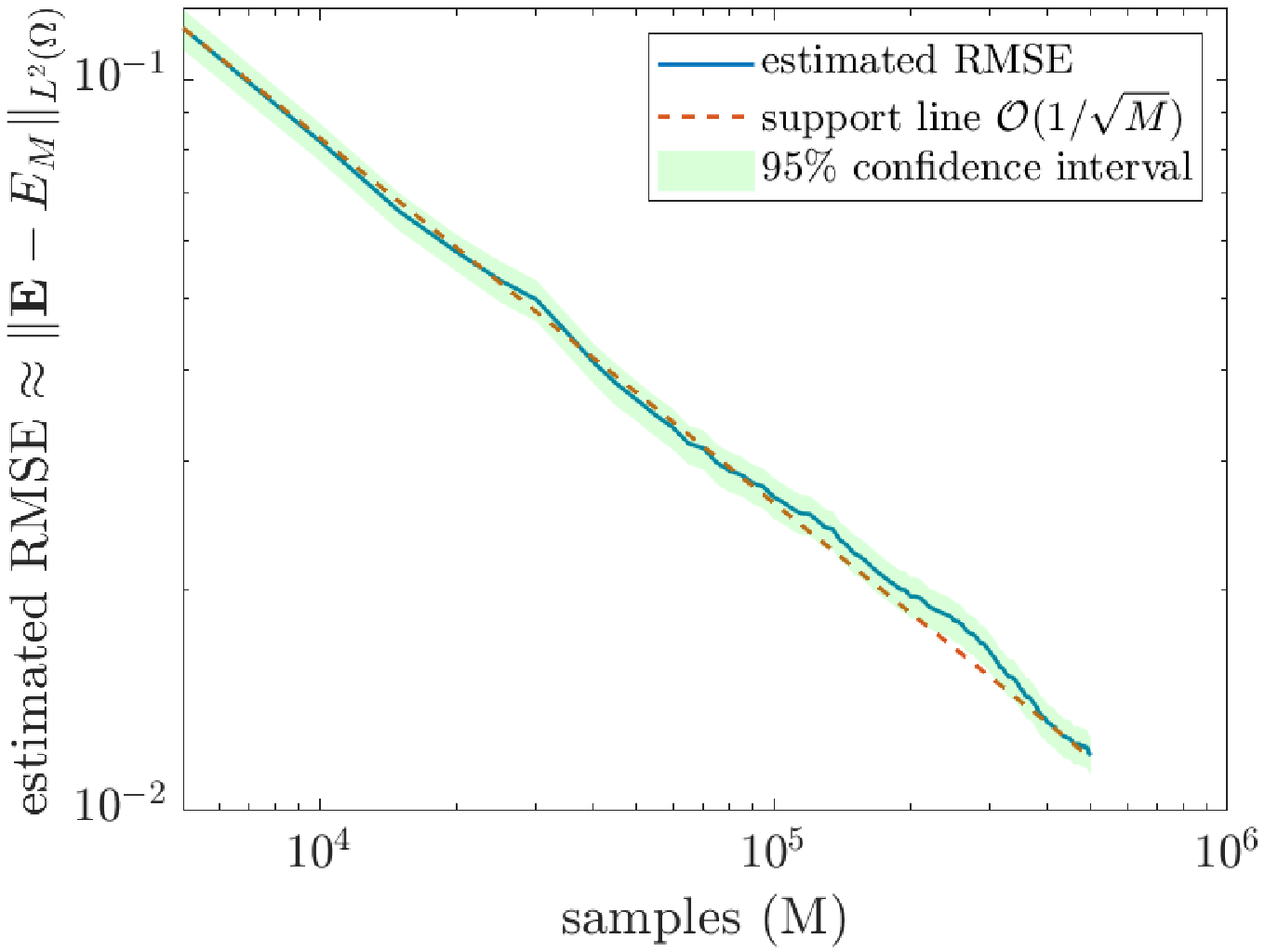}}
\subfigure{\includegraphics[scale=0.24]{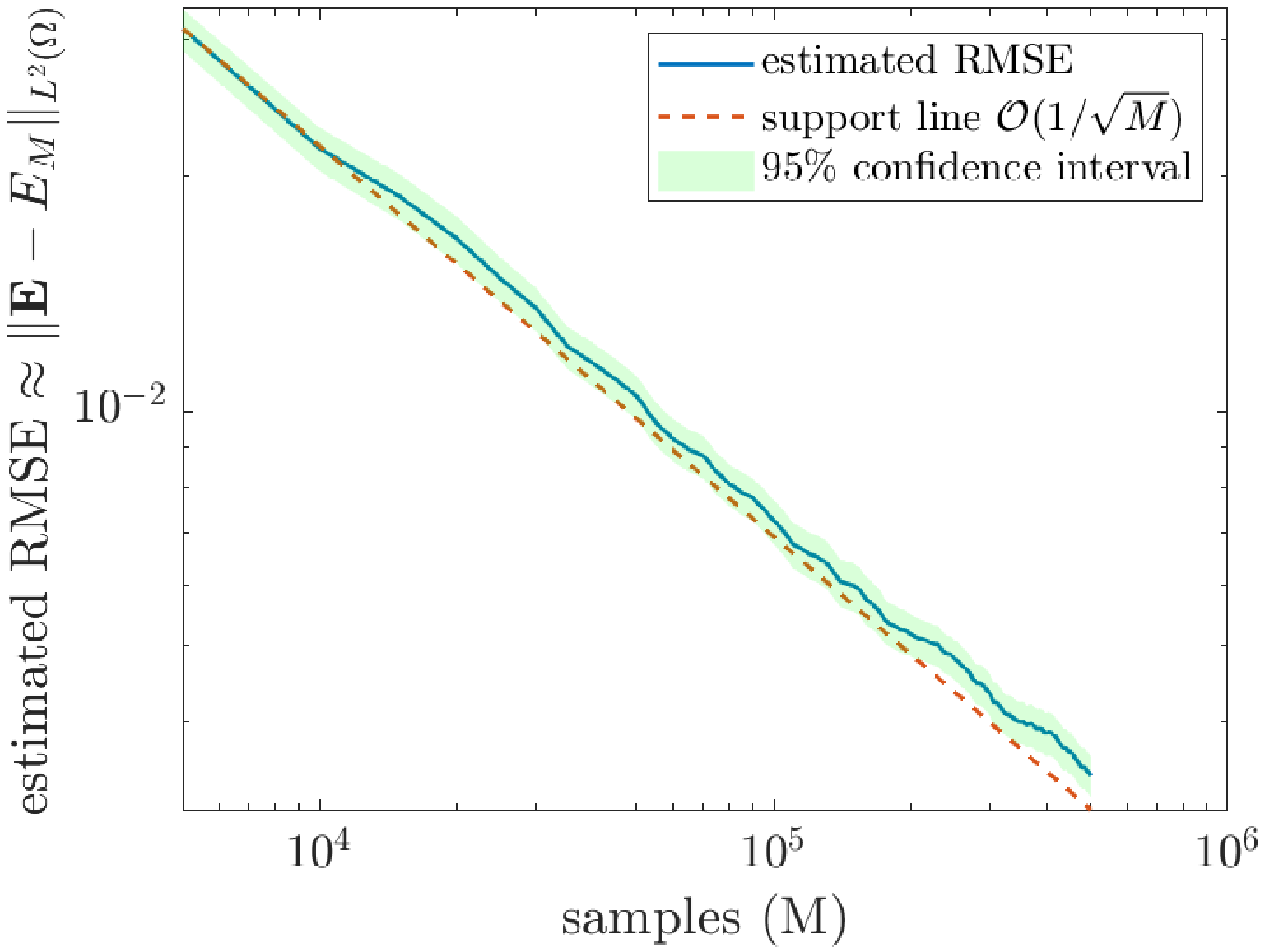}}
\subfigure{\includegraphics[scale=0.24]{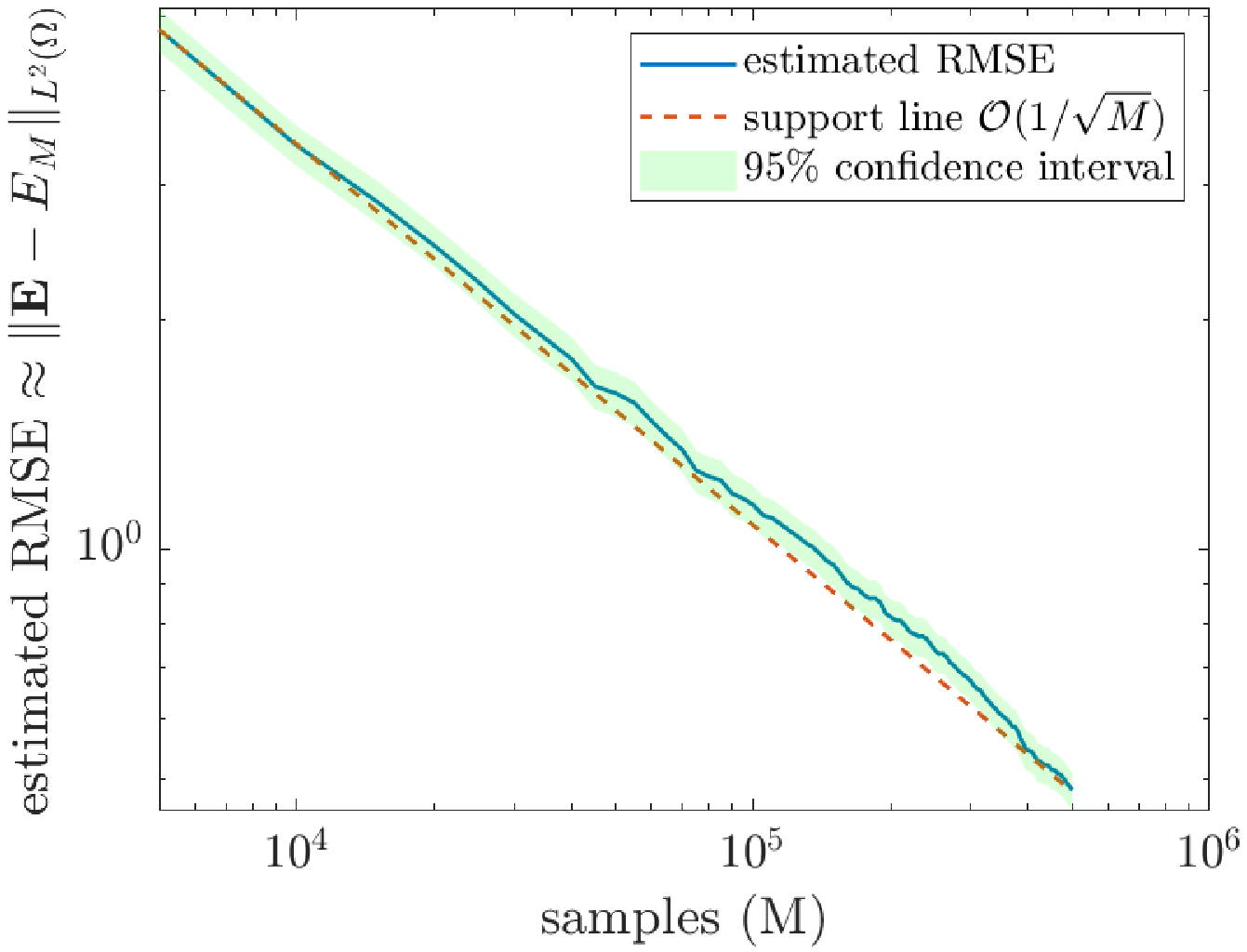}}
\caption{Convergence of the MC estimation of the covariance for different points and parameters: $q_L((1,1),(2,2))$ with $\underline{a}_G=(5,5),~\underline{b}_G=(1,1)$ (top left), $q_L((1,3),(4,3))$ with $\underline{a}_G=(2,2),~\underline{b}_G=(2,2)$ (top right), $q_L((0.9,2),(4,3))$ with $\underline{a}_G=(3,4),~\underline{b}_G=(6,5)$ (bottom left),  $q_L((3,3),(3,3))$ with $\underline{a}_G=(5,5),~\underline{b}_G=(1,1)$ (bottom right).}
\label{FIG:CovEstNonStatMCConv}
\end{figure}

\subsection{Stochastic fitting: pointwise distribution}\label{SUBSEC:NumExStochFit}

In this section, we present an approach to fit the parameters of a subordinated GRF in order to obtain a specific pointwise distribution. We work under Assumption \ref{ASS:GRFCharFct} and assume that a subordinated GRF with an unknown set of parameters $(\gamma_1,\gamma_2,\sigma, \nu_1,\nu_2)$ is given (see Remark \ref{REM:CharacterizationPropertyLevyKhinchin}). We further assume that the distribution of the random field at $n\in \mathbb{N}$ fixed spatial points is known and we want to estimate parameters of the field in order to obtain a fitted subordinated GRF which admits approximately the same pointwise distribution at the given $n$ points.

In order to fit the parameters of the subordinated GRF we follow two different approaches: fitting based on pointwise densities and fitting based on pointwise characteristic functions. The procedure is as follows: Theorem \ref{TH:LevyKhinchinFormula} allows access to the pointwise characteristic functions of a subordinated GRF with a given set of parameters. The pointwise characteristic functions in turn allow to calculate the pointwise density functions  (see Subsection \ref{SUBSUBSEC:NumExPWDistFI} and \cite{NoteOnTheInversionTheorem}). Hence, Theorem \ref{TH:LevyKhinchinFormula} gives access to the pointwise characteristic functions \textit{and} the pointwise density functions of a subordinated GRF. Having either the pointwise densities or the pointwise characteristic functions, a fitting-error can be computed by comparing those with the desired density functions (resp. characteristic functions) at the specific points. Therefore, starting with an initial set of parameters (\textit{initial guess}), we compute the $\mathcal{L}^2$-fitting-error of the densities (resp. characteristic functions) for all $n$ points and optimize the parameters in order to minimize the fitting-error using the Levenberg-Marquardt (LM) algorithm (see for example \cite{MoreLMAlg}). We will call these two approaches \textit{density-approach} and \textit{chararecteristic-function-approach} to fit the parameters of the subordinated GRF. Figure \ref{FIG:NumExTikxComp} illustrates the workflow of the two approaches.

\begin{figure}[ht]
	\centering
	\subfigure{\includegraphics[scale=0.7]{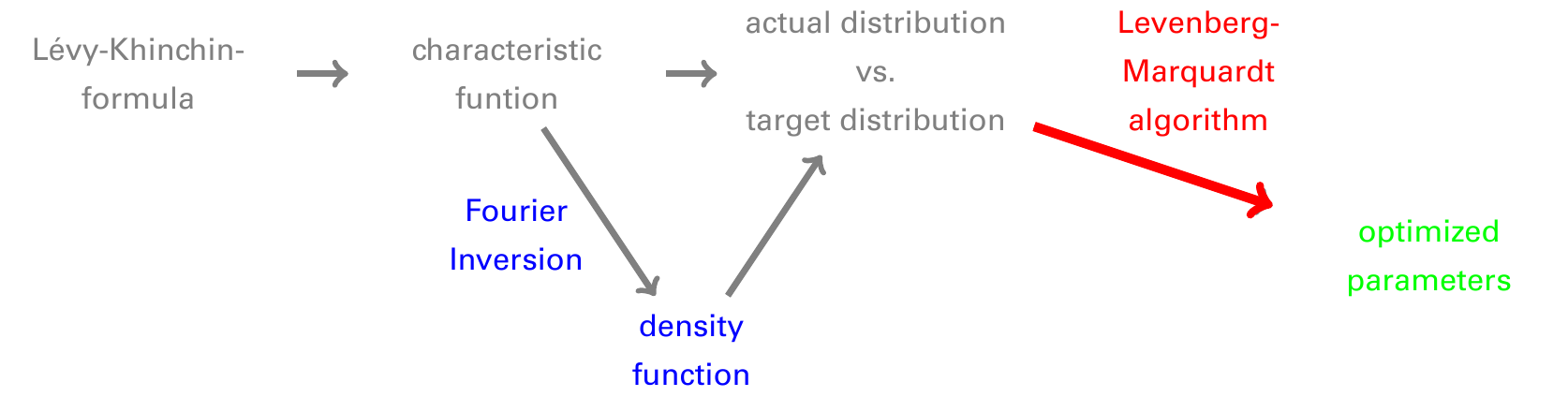}}
\caption{Schematic illustration of the density-approach (blue) and the characteristic-function-approach for parameter optimization of a subordinated GRF.}
\label{FIG:NumExTikxComp}
\end{figure}

In our experiments we consider the domain $[0,\mathbb{T}]_2=[0,1]^2$ and the four points $P_1=(0.1,0.1),$ $P_2=(0.1,0.8),~P_3=(0.7,0.2)$ and $P_4=(1,1)$ (see Figure \ref{FIG:PointsFitting}).
\begin{figure}[ht]
	\centering
	\subfigure{\includegraphics[scale=0.4]{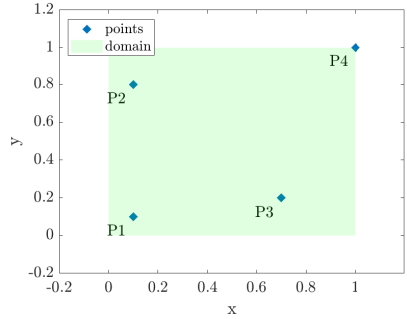}}
	\caption{Visualization of the spatial domain and the considered points for the stochastic fitting problem.}
	\label{FIG:PointsFitting}
	\end{figure}
Further, $l_j$ is a $Gamma(\tilde{a}_G^{(j)},\tilde{b}_G^{(j)})$-subordinator for $j=1,2$, where we vary the parameters $\underline{\tilde{a}}_G=(\tilde{a}_G^{(1)},\tilde{a}_G^{(2)})$ and $\underline{\tilde{b}}_G=(\tilde{b}_G^{(1)},\tilde{b}_G^{(2)})$. The GRF $W$ is defined by $W(x,y)=\sqrt{x+y}\,\tilde{W}(x,y)$, for $(x,y)\in [0,1]^2$, where $\tilde{W}$ is a Matérn-1.5-GRF with varying pointwise standard deviation $\tilde{\sigma}>0$ (see Remark \ref{REM:StatFieldsExtensionLKFormula} and Subsection \ref{SUBSUBSEC:CPA}).

\subsubsection{Density fitting}\label{SUBSUBSEC:NumExFittingDensity}
In this section we follow the density-approach to fit the pointwise distribution of a subordinated GRF at the points $P_1,~P_2,~P_3$ and $P_4$ described by Figure \ref{FIG:PointsFitting}.

We consider a Gamma-subordinated GRF with parameters $\tilde{a}_G^{(1)}=3,~\tilde{b}_G^{(1)}=10,~\tilde{a}_G^{(2)}=3,~\tilde{b}_G^{(2)}=10,~\tilde{\sigma}=2$. 
In the following, we perform two experiments, varying the initial guess for the parameter optimization. Figure \ref{FIG:NumExFittingDensity1} shows the results after $5$ iterations of the LM-algorithm using the following initial guess parameters: $a_G^{(1)}=2,~b_G^{(1)}=12,~a_G^{(2)}=4,~b_G^{(2)}=9,~\sigma=1$.

\begin{figure}[ht]
	\centering
	\subfigure{\includegraphics[scale=0.24]{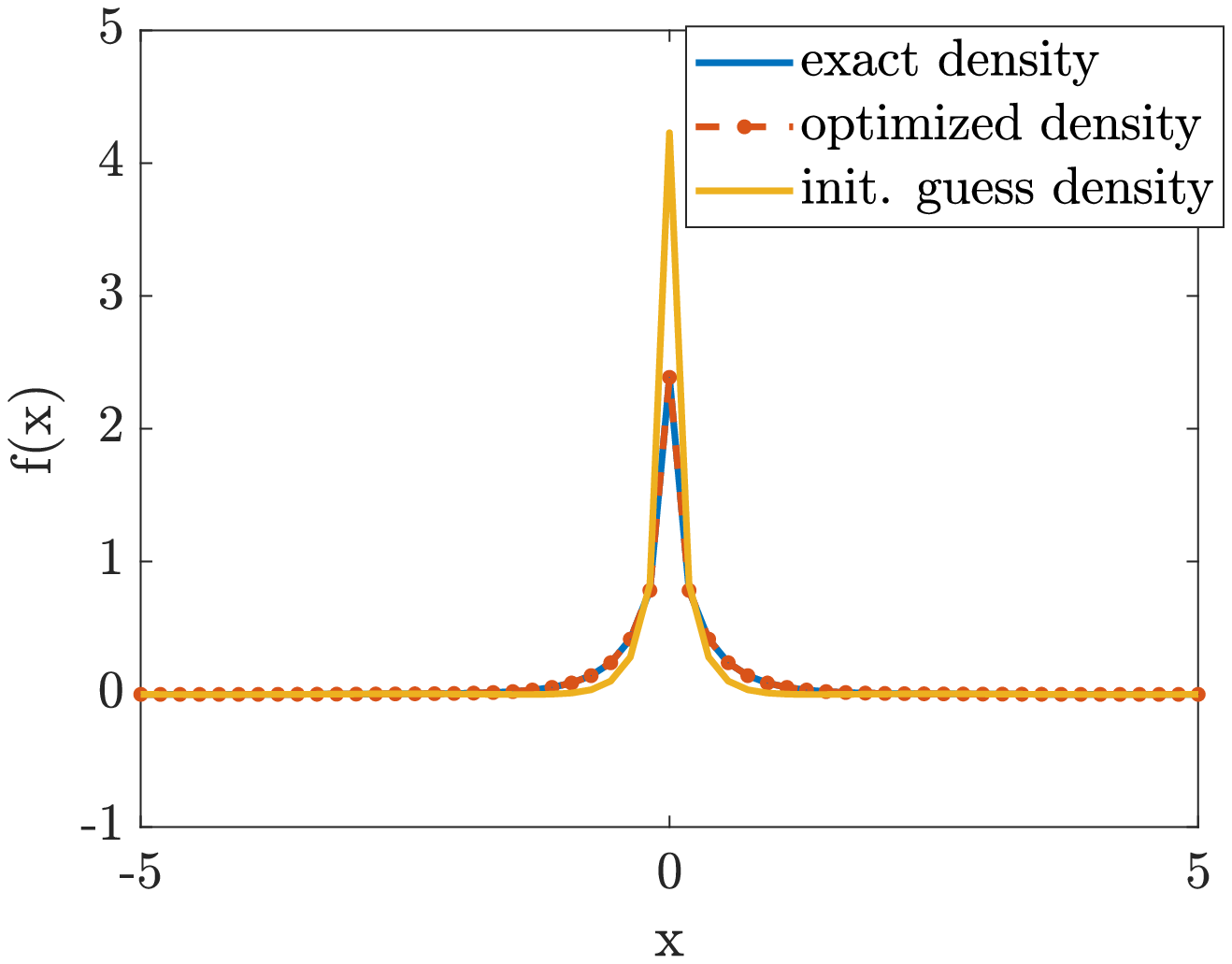}}
\subfigure{\includegraphics[scale=0.24]{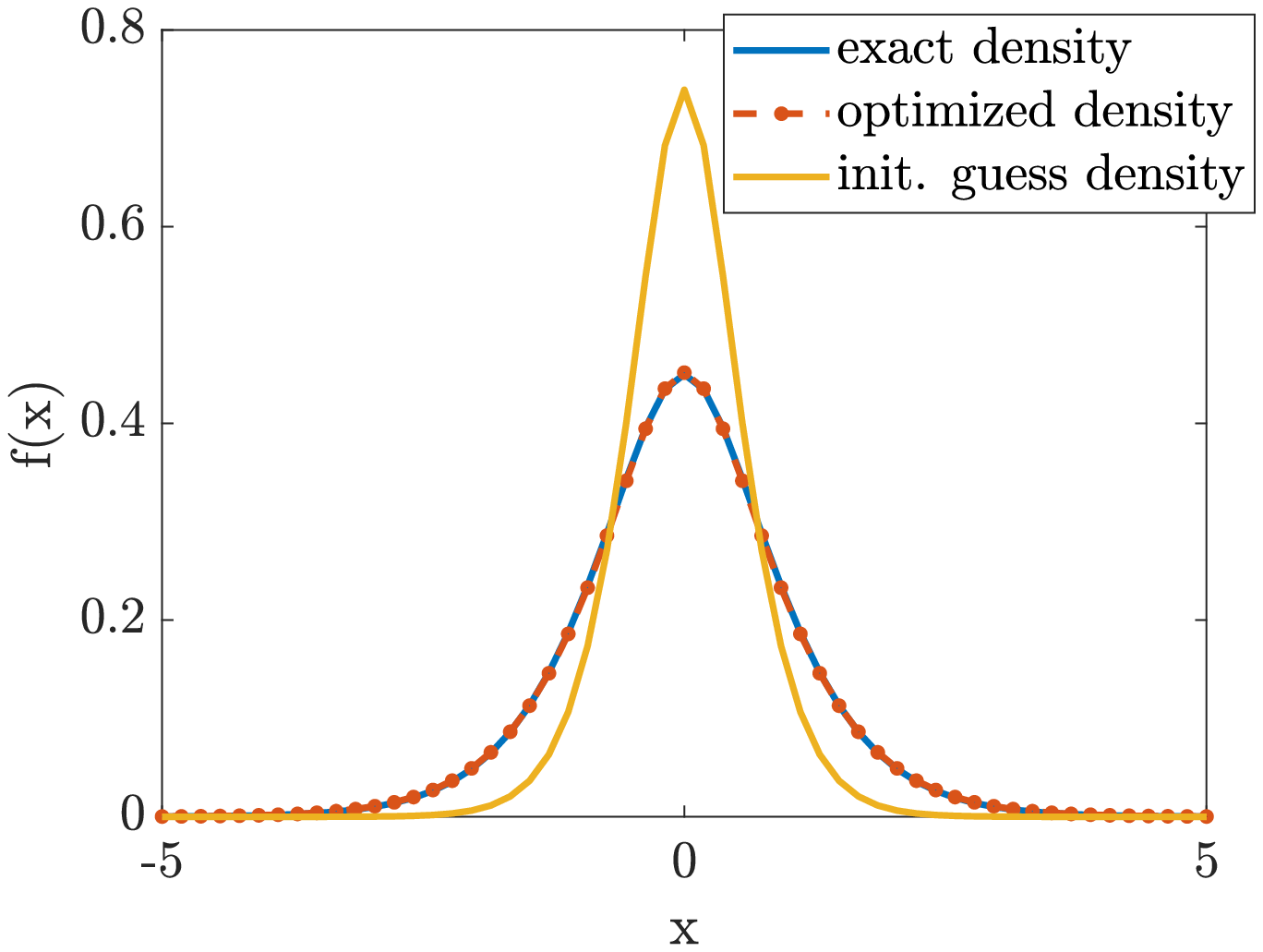}}
\subfigure{\includegraphics[scale=0.24]{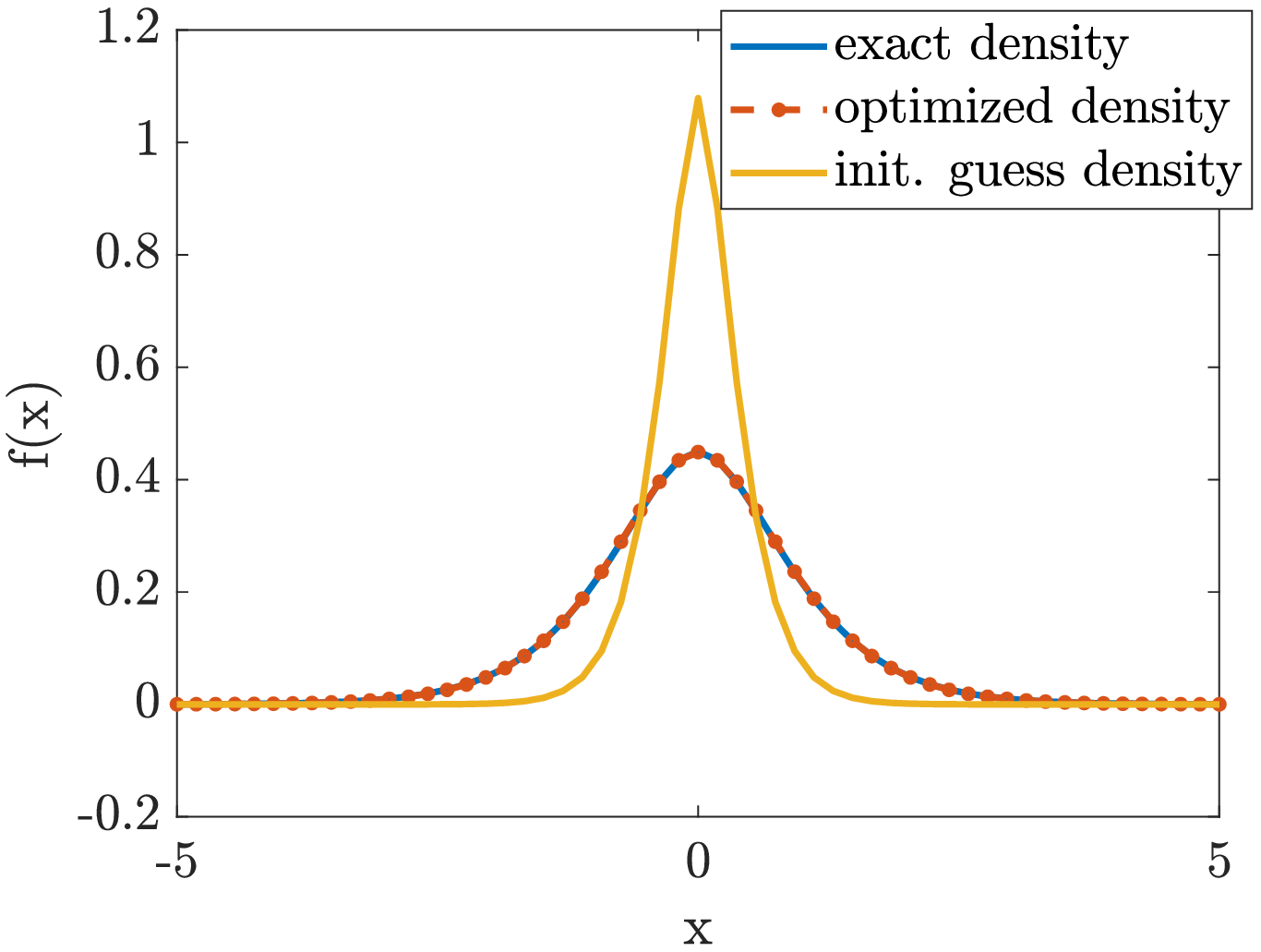}}
\subfigure{\includegraphics[scale=0.24]{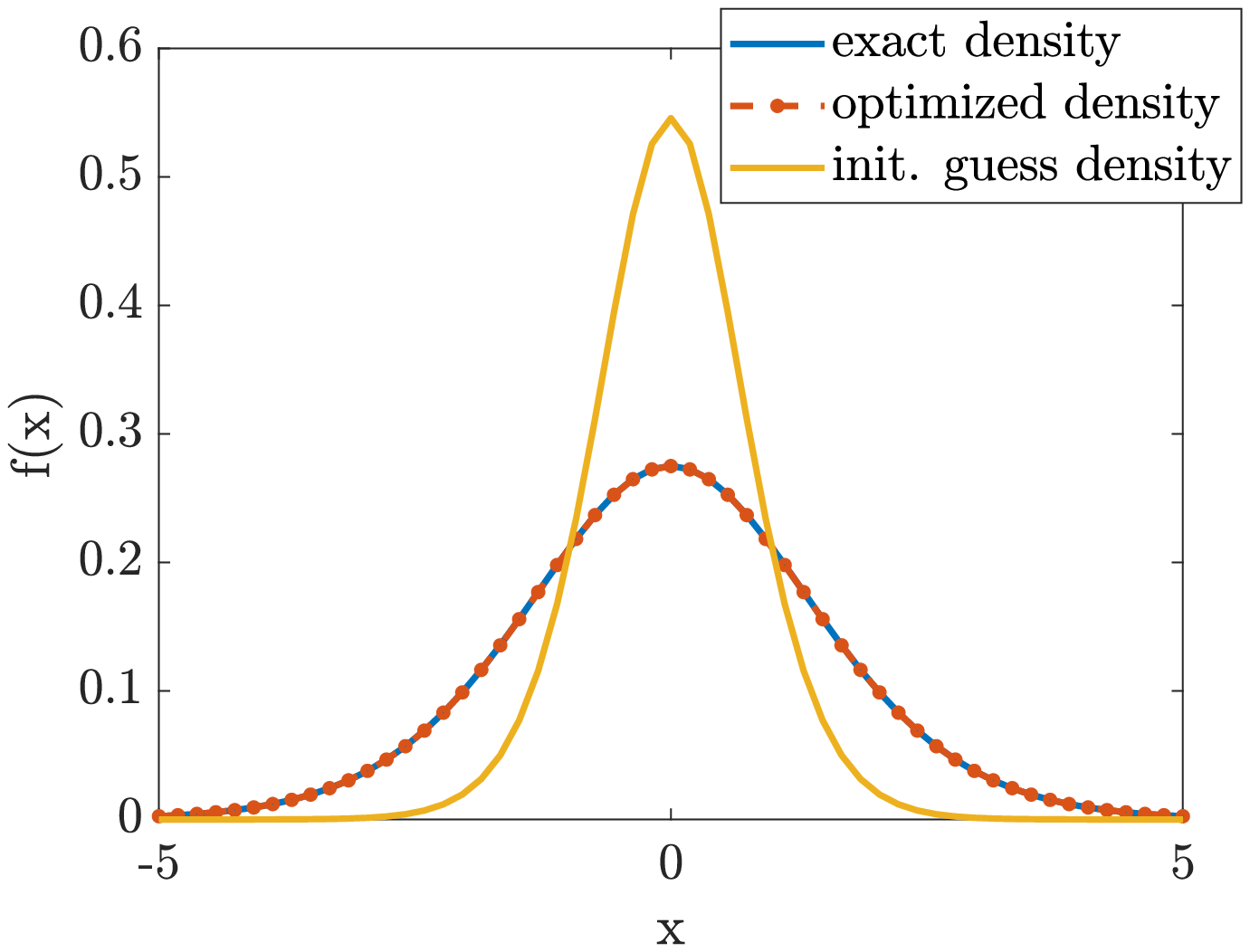}}
\caption{Density functions corresponding to the initial parameters $a_G^{(1)}=2,~b_G^{(1)}=12,~a_G^{(2)}=4,~b_G^{(2)}=9,~\sigma=1$, the optimized parameters and the exact parameters of the Gamma-subordinated GRF at different points: $P_1=(0.1,0.1)$ (left), $P_2=(0.1,0.8)$ (second from left), $P_3=(0.7,0.2)$ (second from right), $P_4=(1,1)$ (right).}
\label{FIG:NumExFittingDensity1}
\end{figure}
As we see in Figure \ref{FIG:NumExFittingDensity1}, the density functions at the 4 considered points match the desired densities corresponding to the true parameter set. Since this might also be caused by the fact that the initial guess parameters are quite close to the true parameters, in the second experiment we choose the initial guess far away from the true values.  
%
Figure \ref{FIG:NumExFittingDensity3} shows that the set of initial guess parameters leading to a good fit of the densities corresponding to the optimized parameters is limited: if we consider the initial parameters  $a_G^{(1)}=1,~b_G^{(1)}=24,~a_G^{(2)}=28,~b_G^{(2)}=1,~\sigma=1$, we see that densities do not match the desired ones anymore. This is also true for a higher number of iterations of the LM-algorithm.

\begin{figure}[ht]
	\centering
	\subfigure{\includegraphics[scale=0.24]{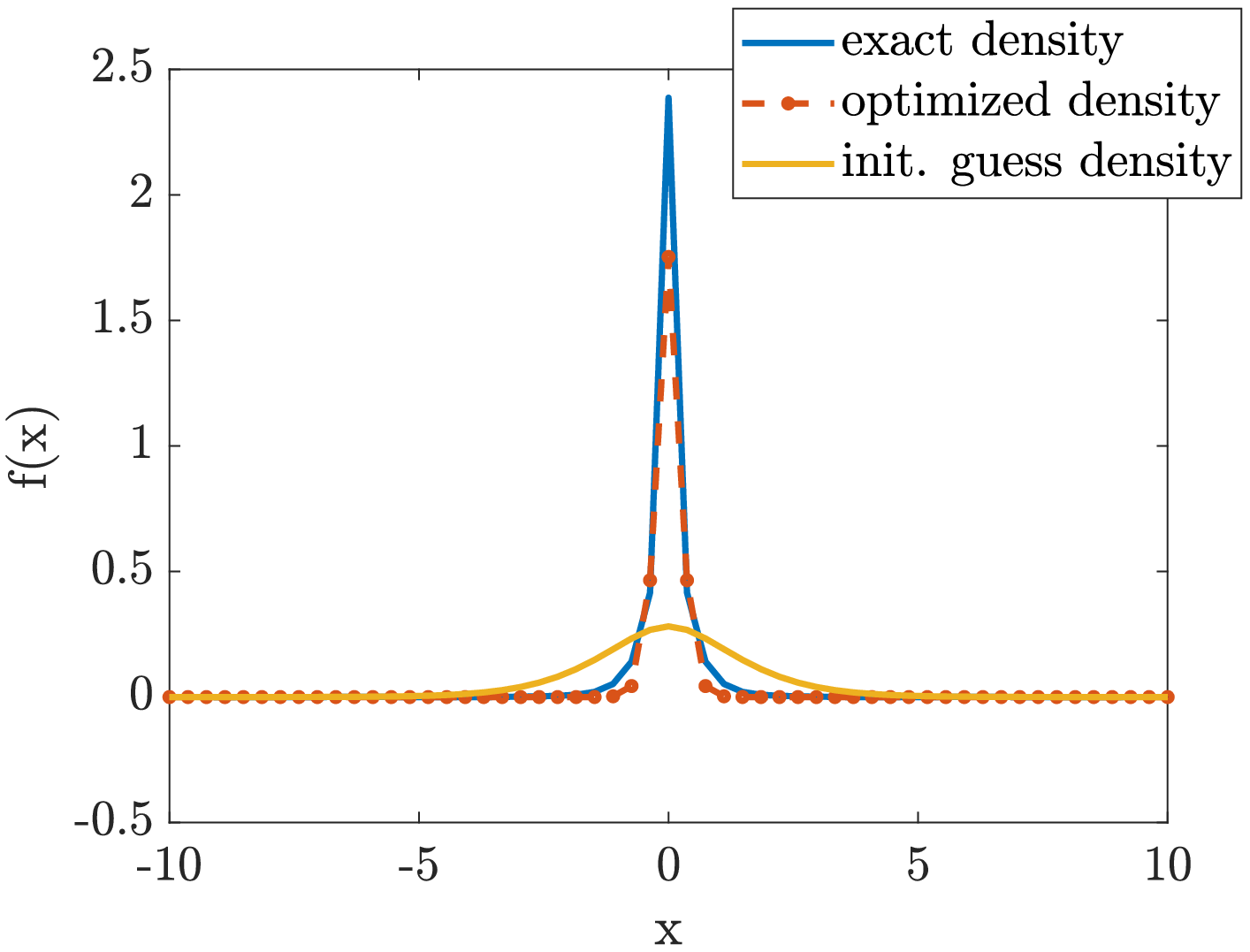}}
\subfigure{\includegraphics[scale=0.24]{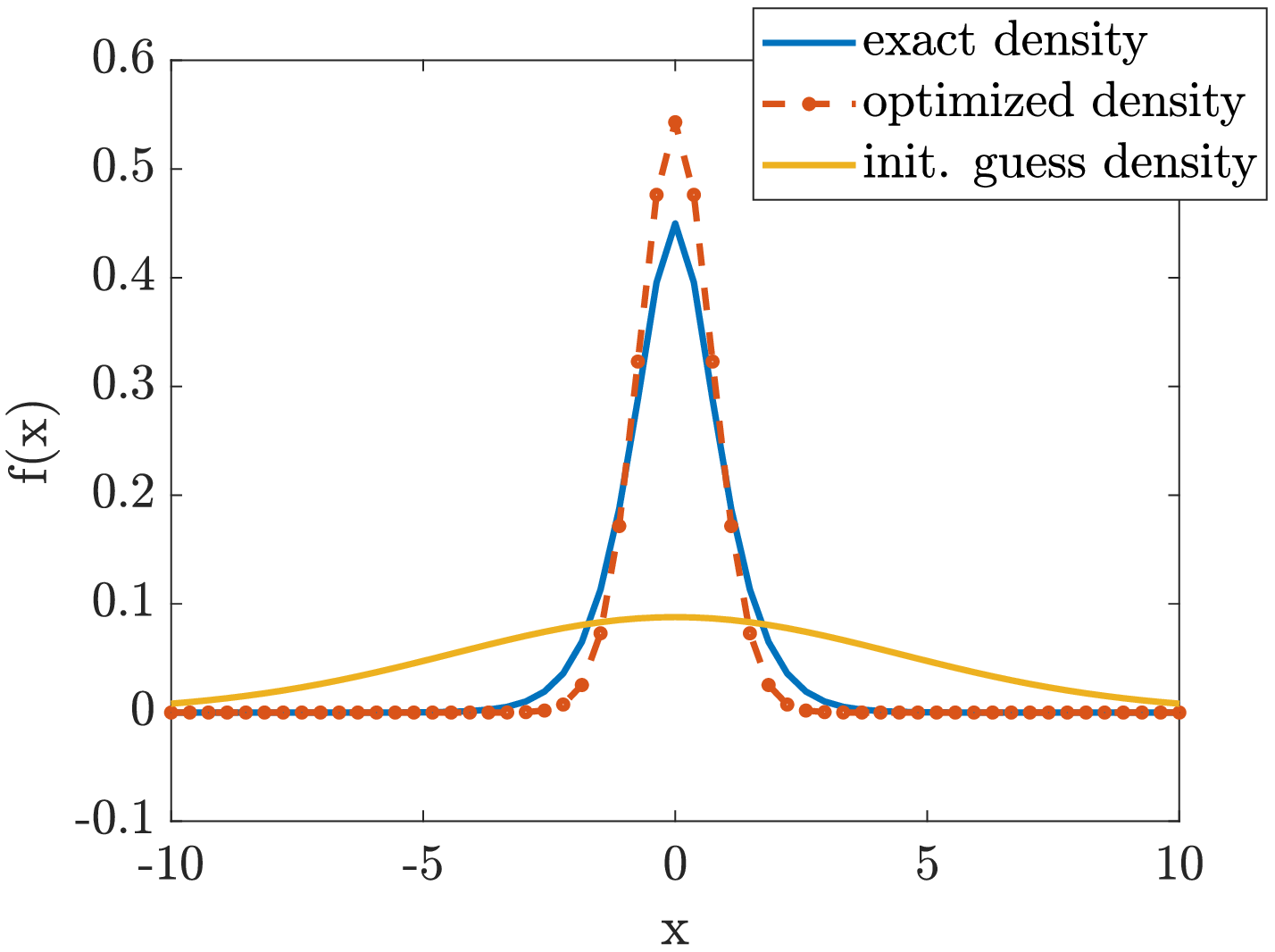}}
\subfigure{\includegraphics[scale=0.24]{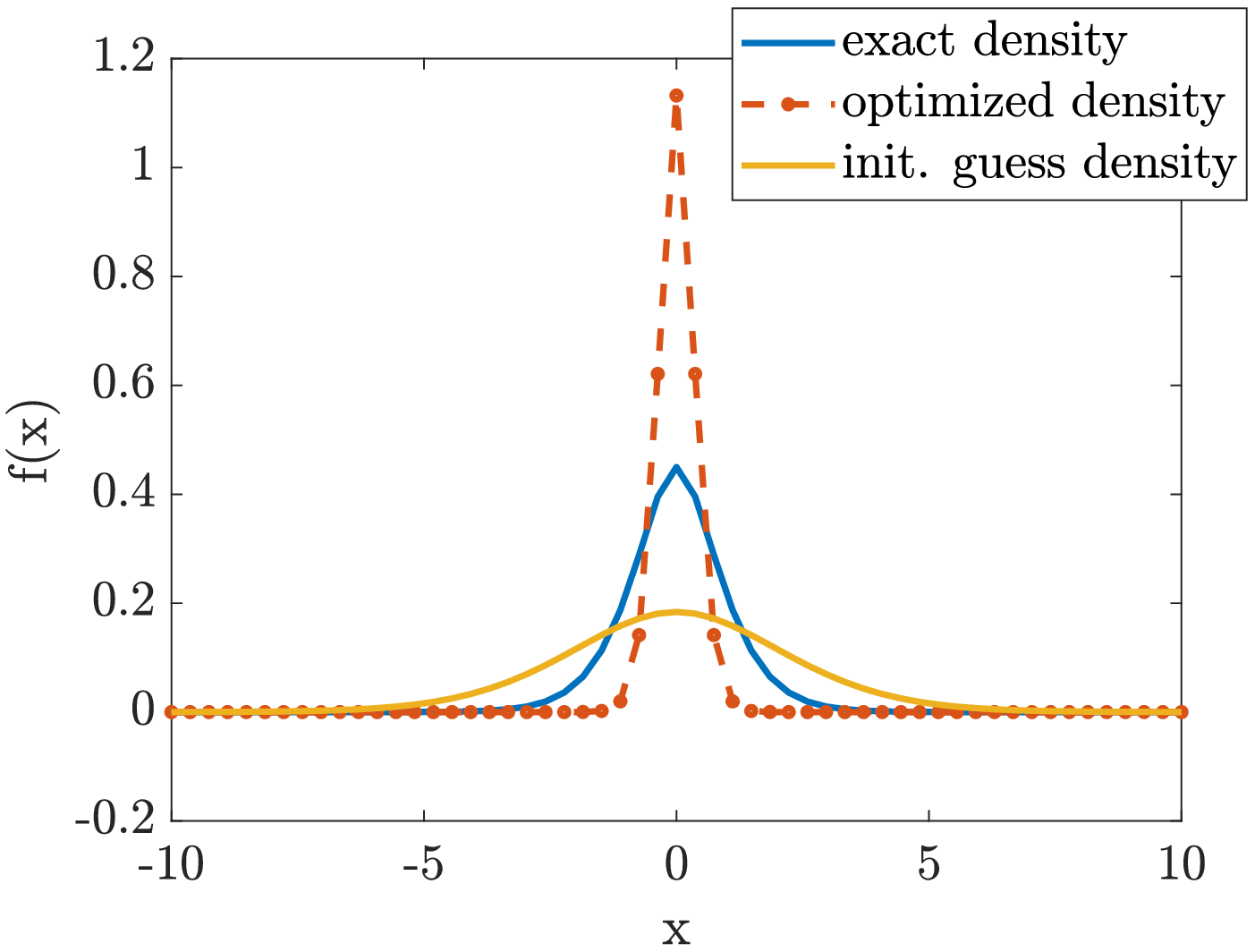}}
\subfigure{\includegraphics[scale=0.24]{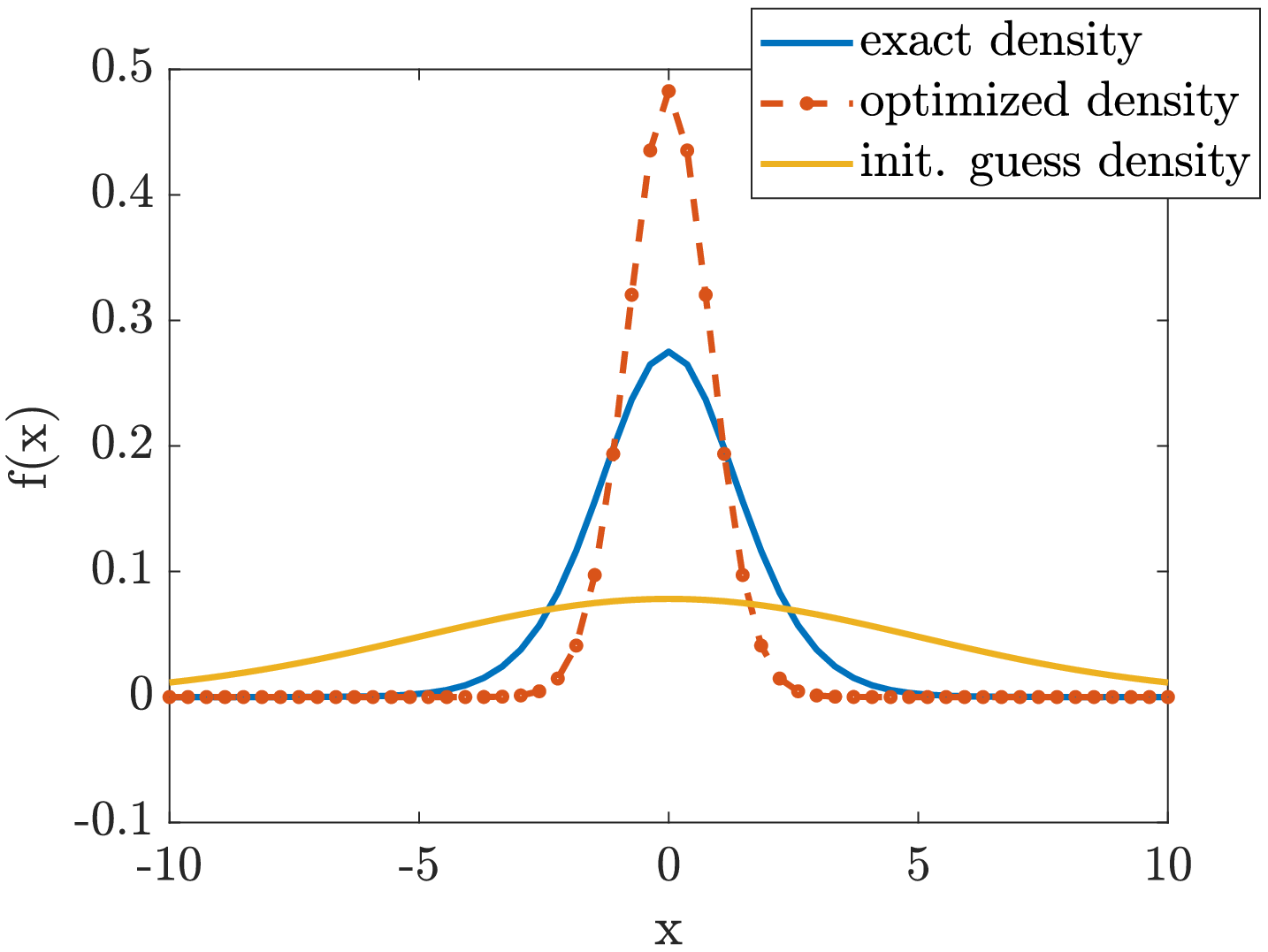}}
\caption{Density functions corresponding to the initial parameters $a_G^{(1)}=1,~b_G^{(1)}=24,~a_G^{(2)}=28,~b_G^{(2)}=1,~\sigma=1$, the optimized parameters and the exact parameters of the Gamma-subordinated GRF at different points: $P_1=(0.1,0.1)$ (left), $P_2=(0.1,0.8)$ (second from left), $P_3=(0.7,0.2)$ (second from right), $P_4=(1,1)$ (right).}
\label{FIG:NumExFittingDensity3}
\end{figure}

The results show that the density-approach brings two problems: the first one is that we need evaluations of the approximated pointwise density functions which is very expensive as iterated integrals that have to be computed. This is also the reason why we only perform $5$ iterations of the LM-algorithm. Furthermore, the method is quite sensitive to the parameters for the approximation of the integrals which have to be computed for the error. This is also a reason why the method fails if we choose an initial guess parameter set far away from the true parameters. 
\subsubsection{Characteristic function fitting}

In the next examples we use pointwise characteristic functions to optimize the parameters in order to obtain the desired pointwise distributions.

We again consider a Gamma-subordinated GRF with $\tilde{a}_G^{(1)}=3,~\tilde{b}_G^{(1)}=10,~\tilde{a}_G^{(2)}=3,~\tilde{b}_G^{(2)}=10,~\tilde{\sigma}=2$ and we perform $3$ experiments, varying the initial guess. Note that for the characteristic-function-approach, we always compare the real part of the characteristic functions since the imaginary parts are numerically zero in all considered examples. Figure \ref{FIG:NumExFittingChar1} shows the results after $50$ iterations of the LM-algorithm using the initial guess parameters $a_G^{(1)}=2,~b_G^{(1)}=12,~a_G^{(2)}=4,~b_G^{(2)}=9,~\sigma=1$. 
\begin{figure}[ht]
	\centering
	\subfigure{\includegraphics[scale=0.23]{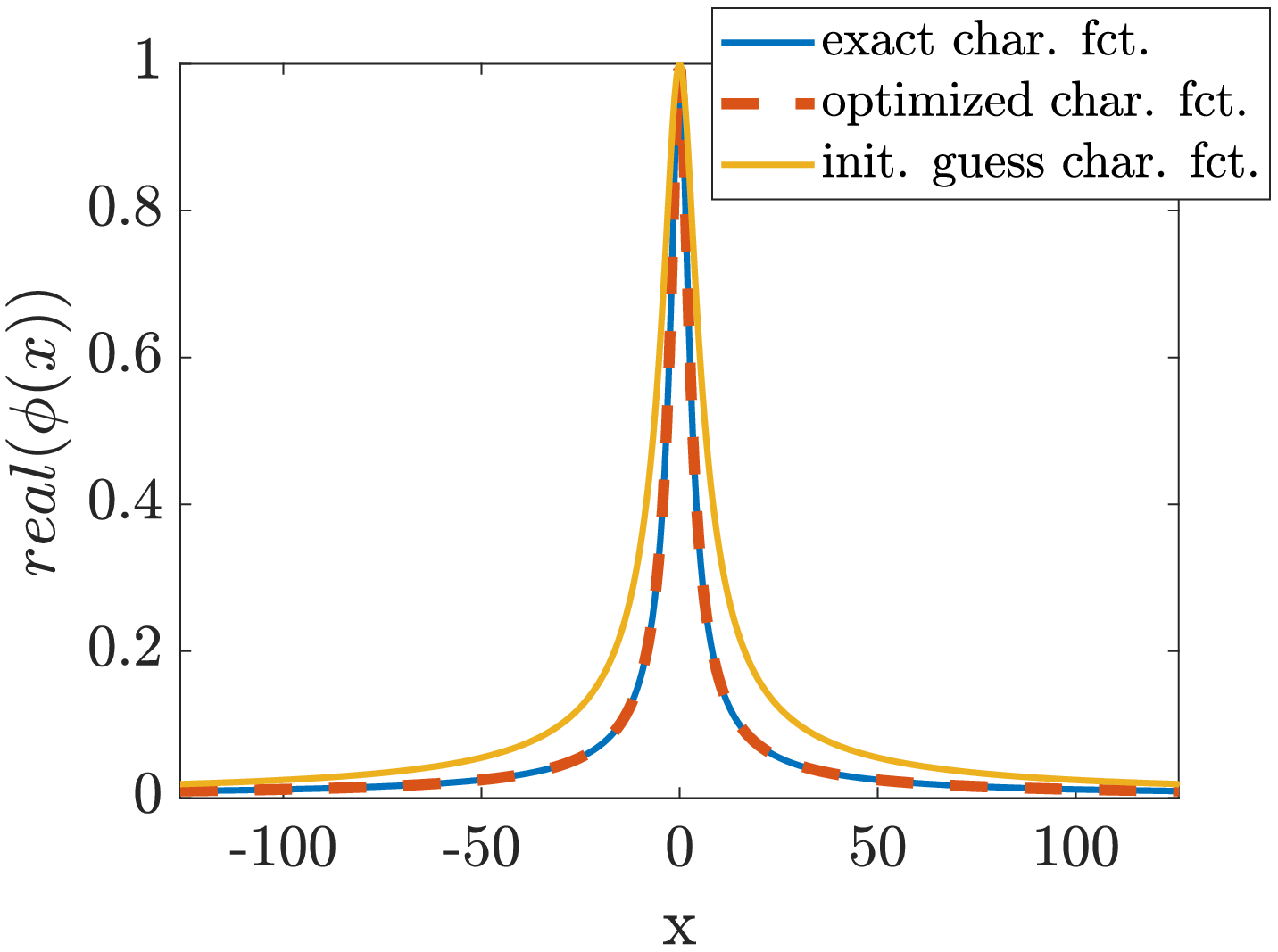}}
\subfigure{\includegraphics[scale=0.23]{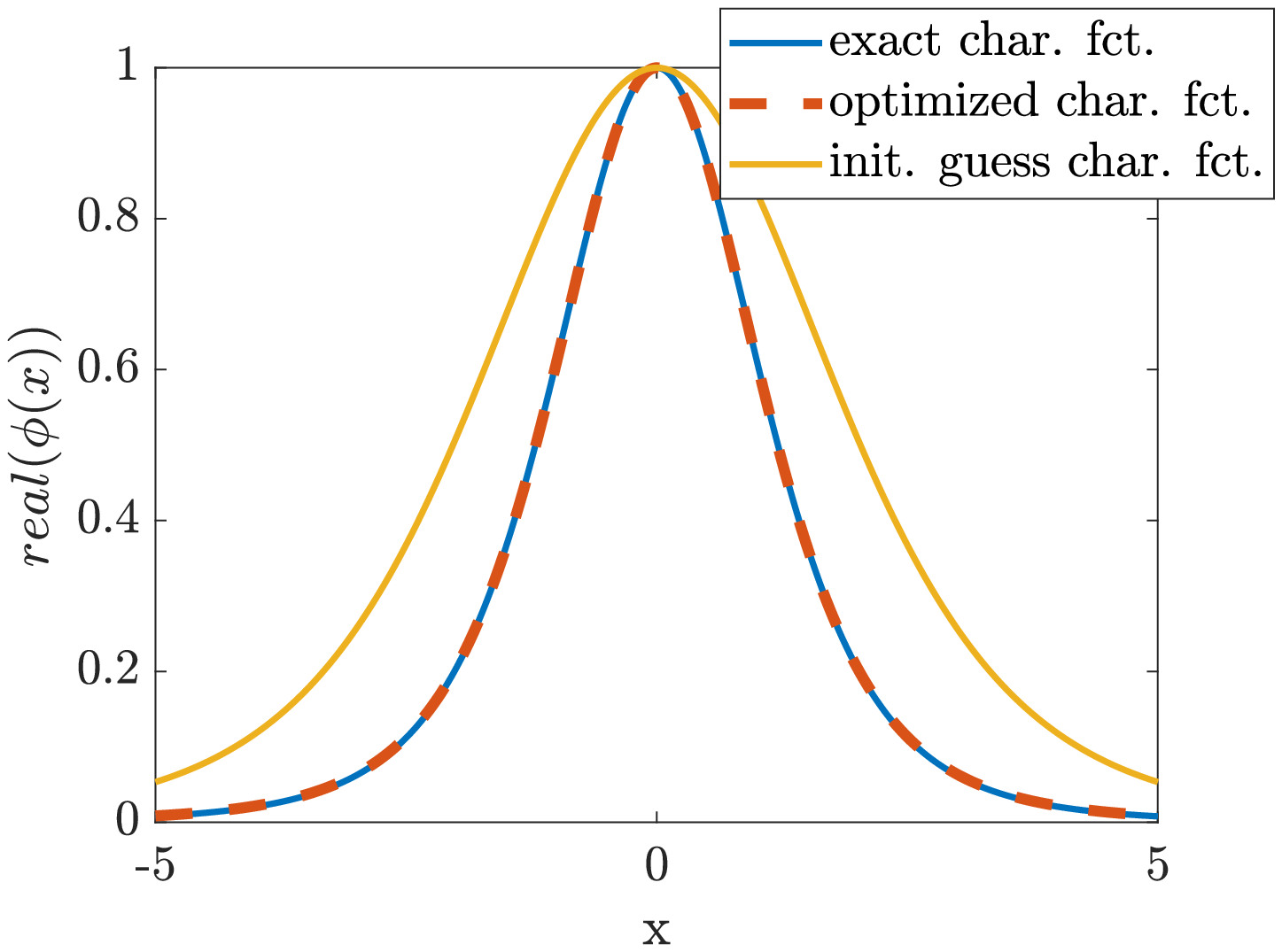}}
\subfigure{\includegraphics[scale=0.23]{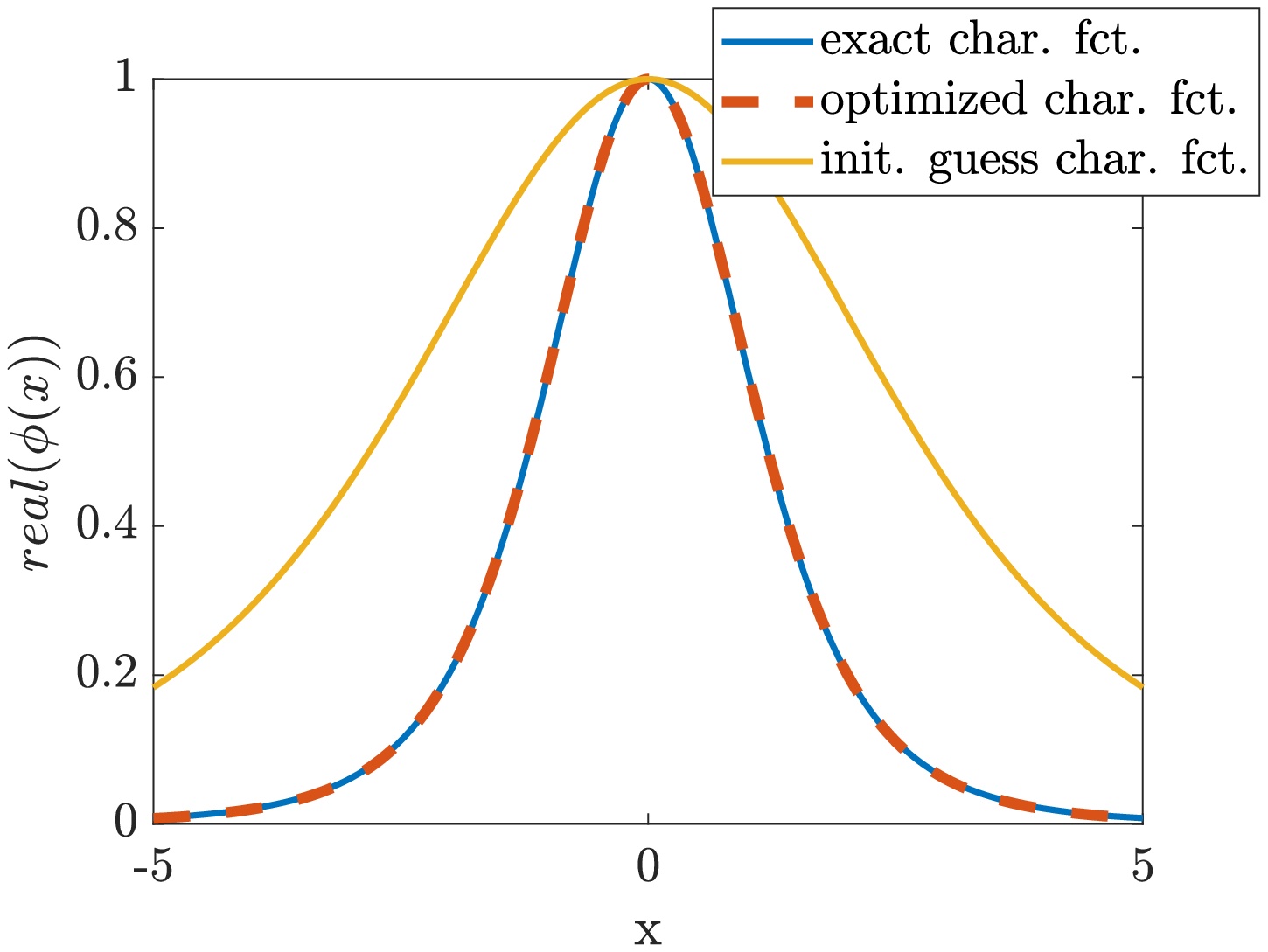}}
\subfigure{\includegraphics[scale=0.23]{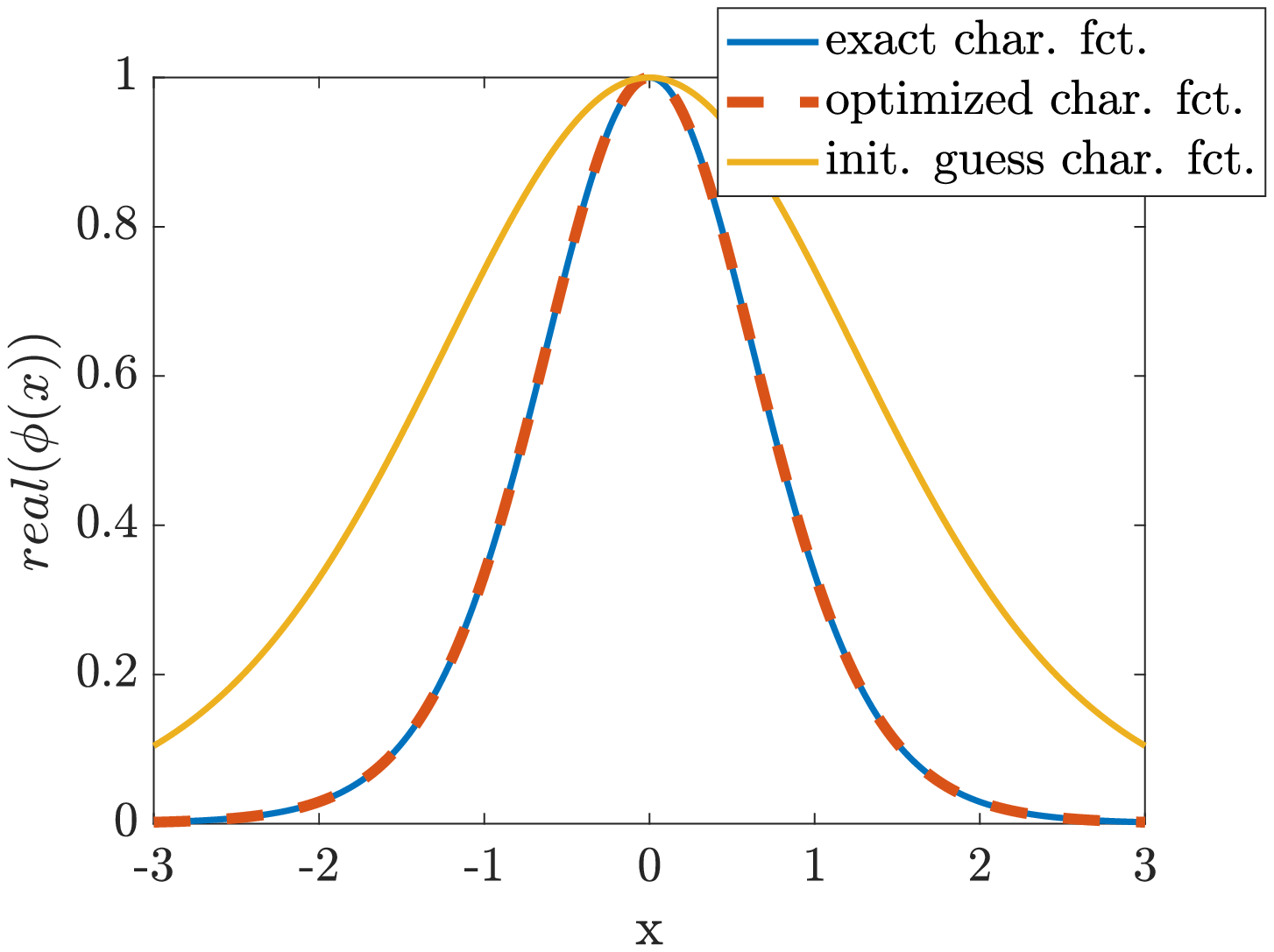}}
\caption{Real part of the characteristic functions corresponding to the initial parameters $a_G^{(1)}=2,~b_G^{(1)}=12,~a_G^{(2)}=4,~b_G^{(2)}=9,~\sigma=1$, the optimized parameters and the exact parameters of the Gamma-subordinated GRF at different points: $P_1=(0.1,0.1)$ (left), $P_2=(0.1,0.8)$ (second from left), $P_3=(0.7,0.2)$ (second from right), $P_4=(1,1)$ (right).}
\label{FIG:NumExFittingChar1}
\end{figure}
We see that the pointwise characteristic functions corresponding to the optimized parameters match the desired characteristic functions in this example. 


As described in Subsection \ref{SUBSUBSEC:NumExFittingDensity}, moving the initial parameters further away uncovers the limitations of the density-approach. Therefore, the results for the initial parameters given by $a_G^{(1)}=1,~b_G^{(1)}=24,~a_G^{(2)}=28,~b_G^{(2)}=1,~\sigma=1$ (Figure \ref{FIG:NumExFittingChar3}) are important for the comparison of the two approaches. We see that the characteristic-function-approach still works for this set of initial parameters: the characteristic functions of the subordinated GRF corresponding to the optimized parameters at the four specified point match the desired distributions.

\begin{figure}[ht]
	\centering
	\subfigure{\includegraphics[scale=0.23]{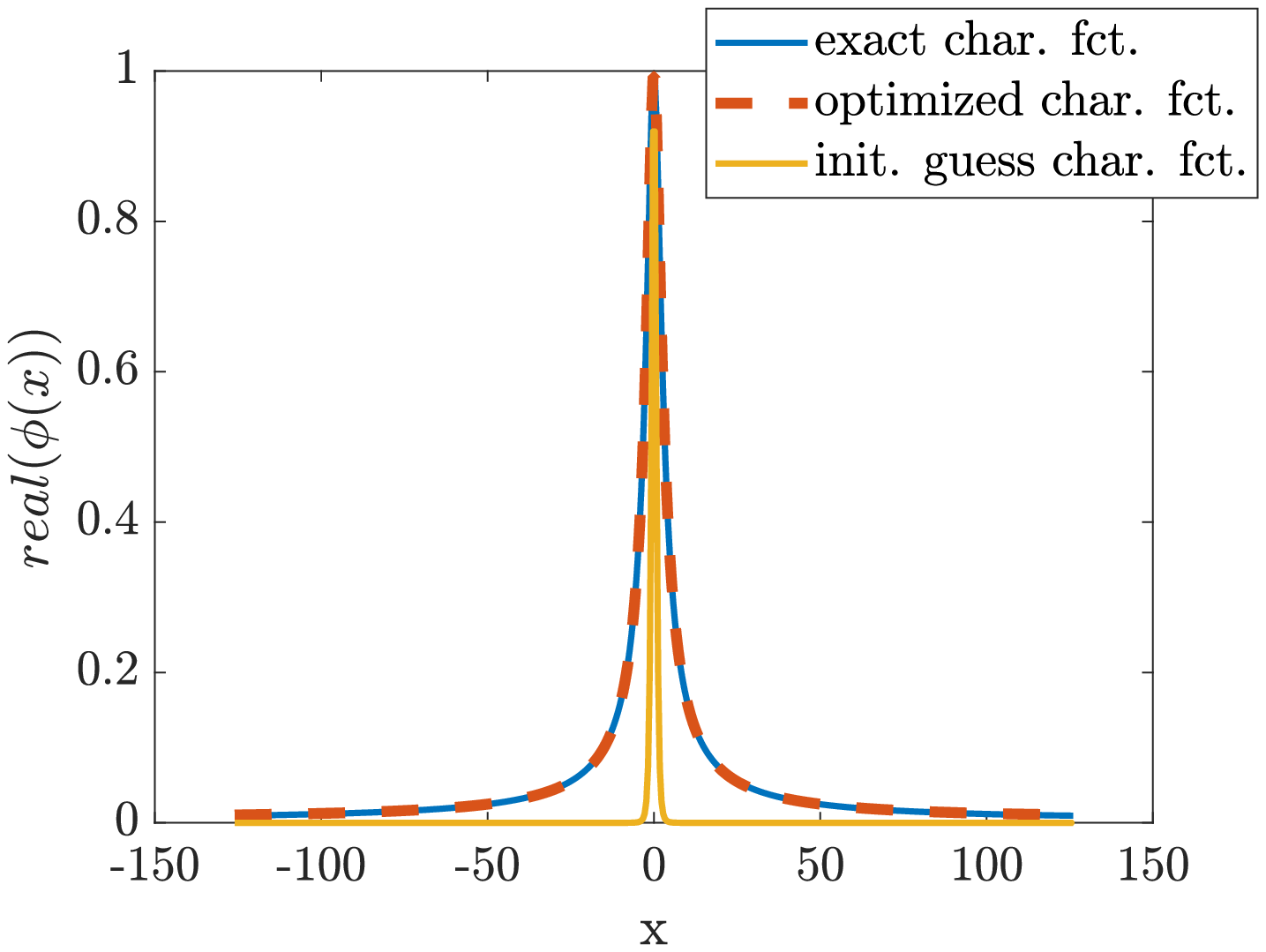}}
\subfigure{\includegraphics[scale=0.23]{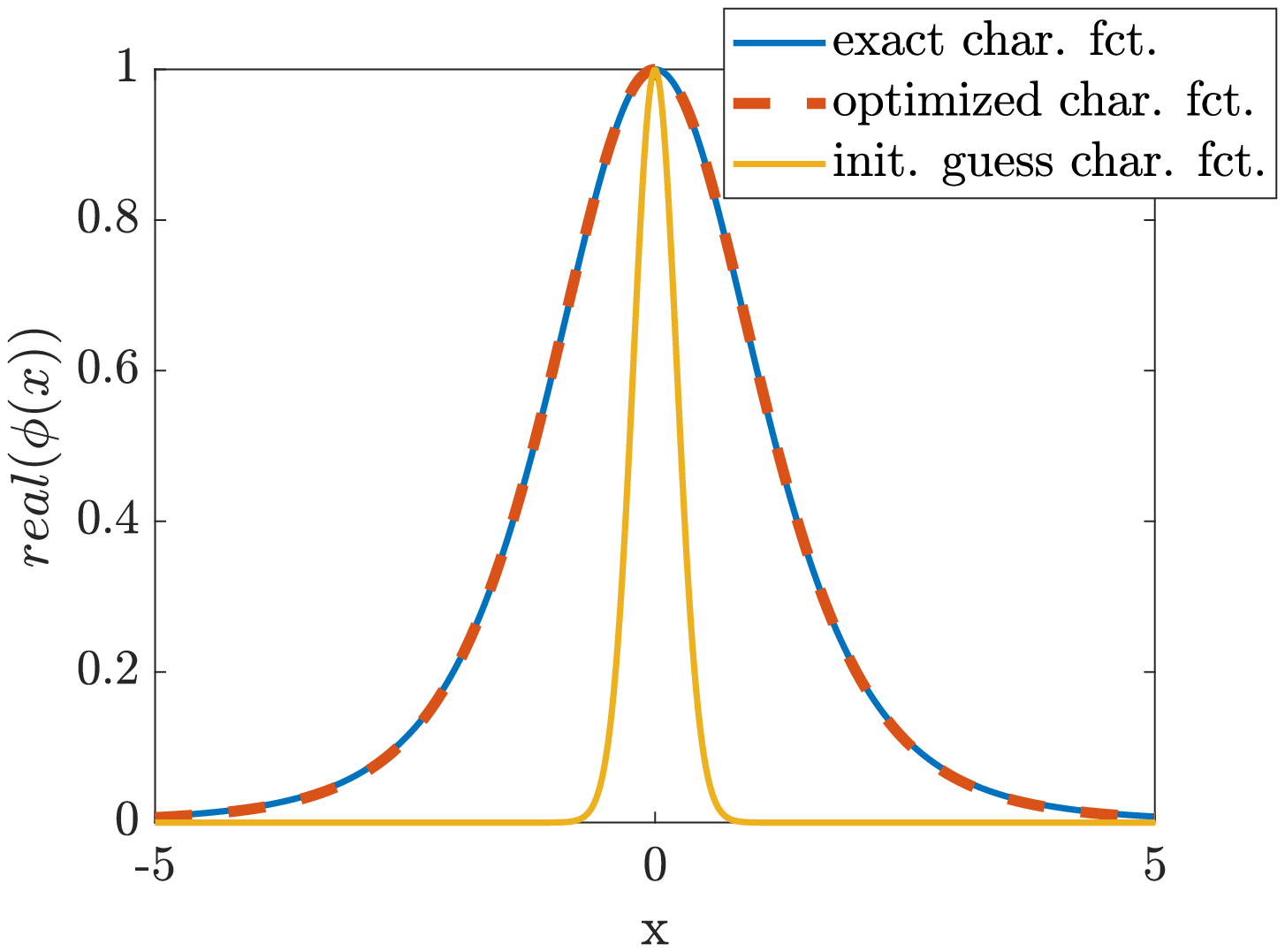}}
\subfigure{\includegraphics[scale=0.23]{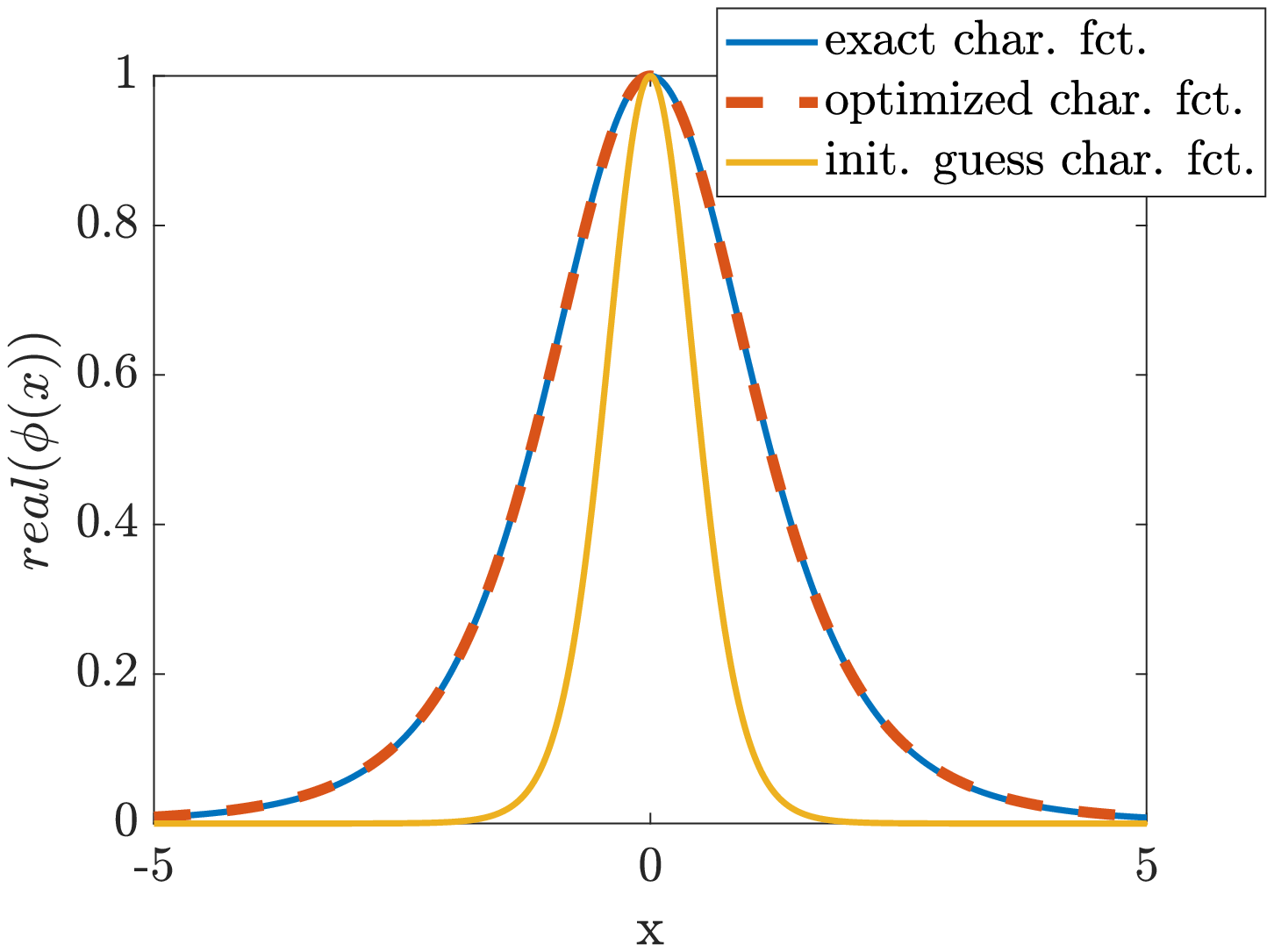}}
\subfigure{\includegraphics[scale=0.23]{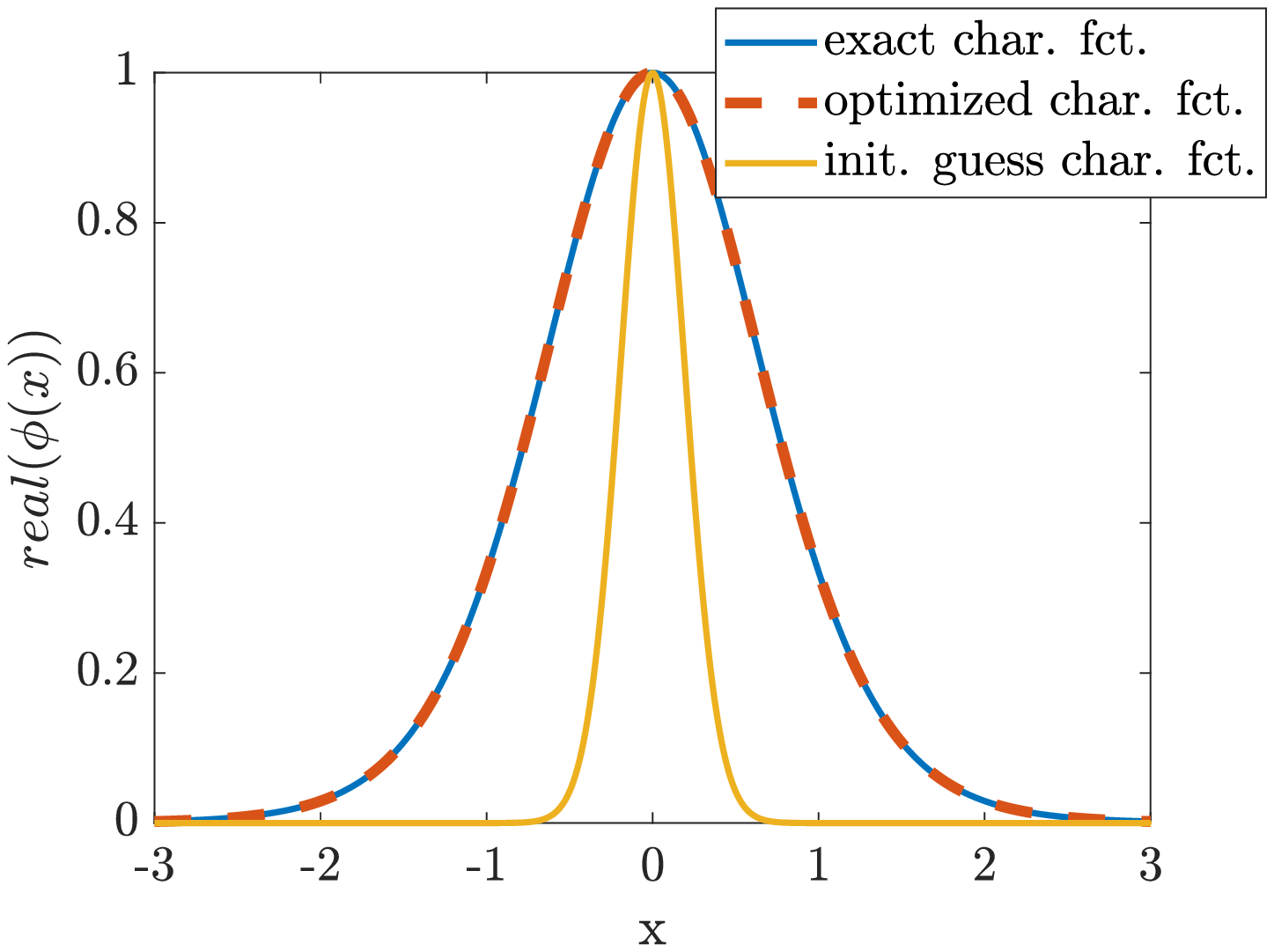}}
\caption{Real part of the characteristic functions corresponding to the initial parameters $a_G^{(1)}=1,~b_G^{(1)}=24,~a_G^{(2)}=28,~b_G^{(2)}=1,~\sigma=1$, the optimized parameters and the exact parameters of the Gamma-subordinated GRF at different points: $P_1=(0.1,0.1)$ (left), $P_2=(0.1,0.8)$ (second from left), $P_3=(0.7,0.2)$ (second from right), $P_4=(1,1)$ (right).}
\label{FIG:NumExFittingChar3}
\end{figure}
A further impressive result is shown in Figure  \ref{FIG:NumExFittingChar4}: even if we set the initial parameters to be $a_G^{(1)}=b_G^{(1)}=a_G^{(2)}=b_G^{(2)}=0,~\sigma=1$, which corresponds to the situation that the initial guess distribution is deterministic and zero, the optimized parameters lead to matching characteristic functions at the four points for the characteristic-function-approach.
\begin{figure}[ht]
	\centering
	\subfigure{\includegraphics[scale=0.23]{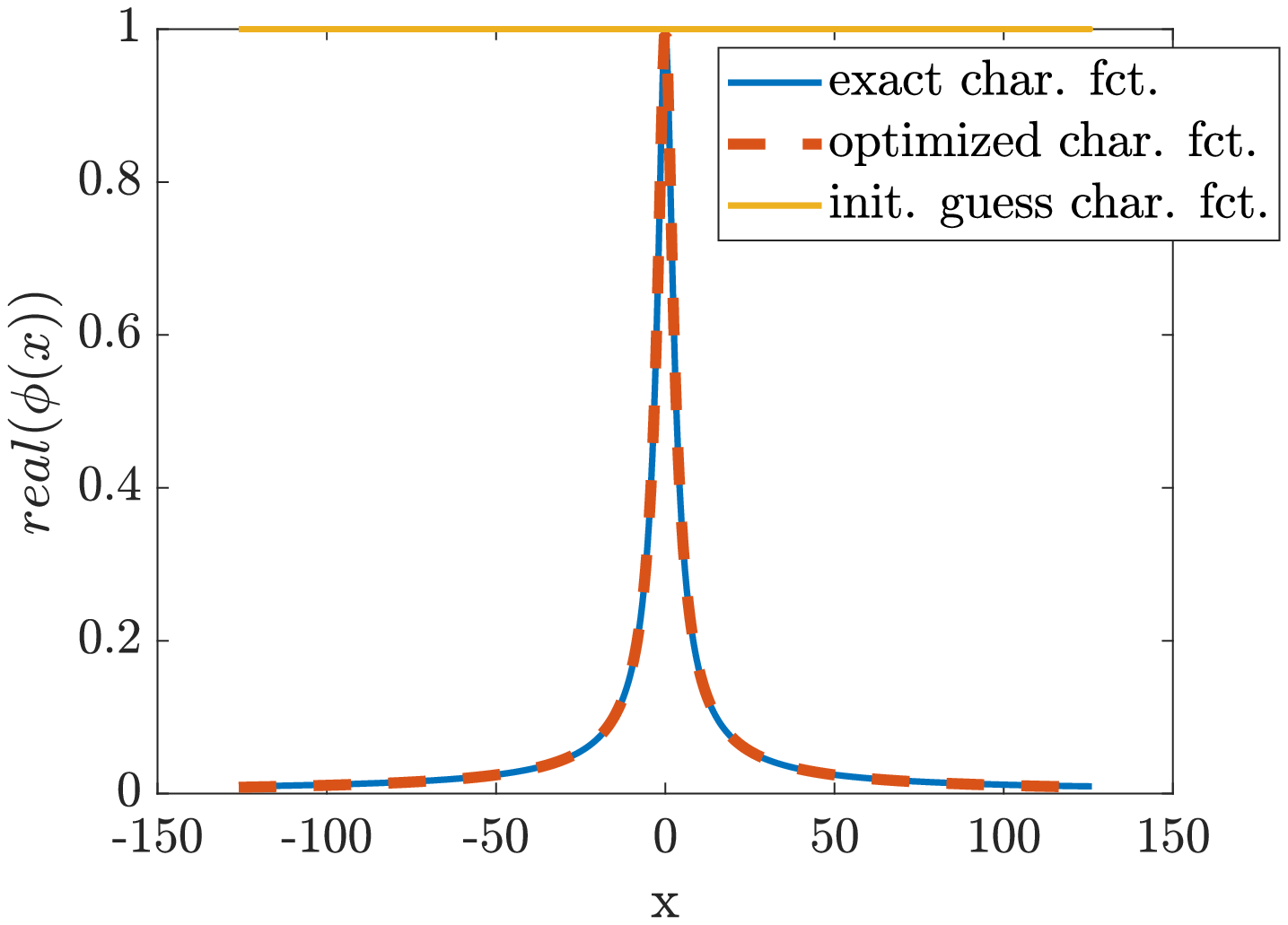}}
\subfigure{\includegraphics[scale=0.23]{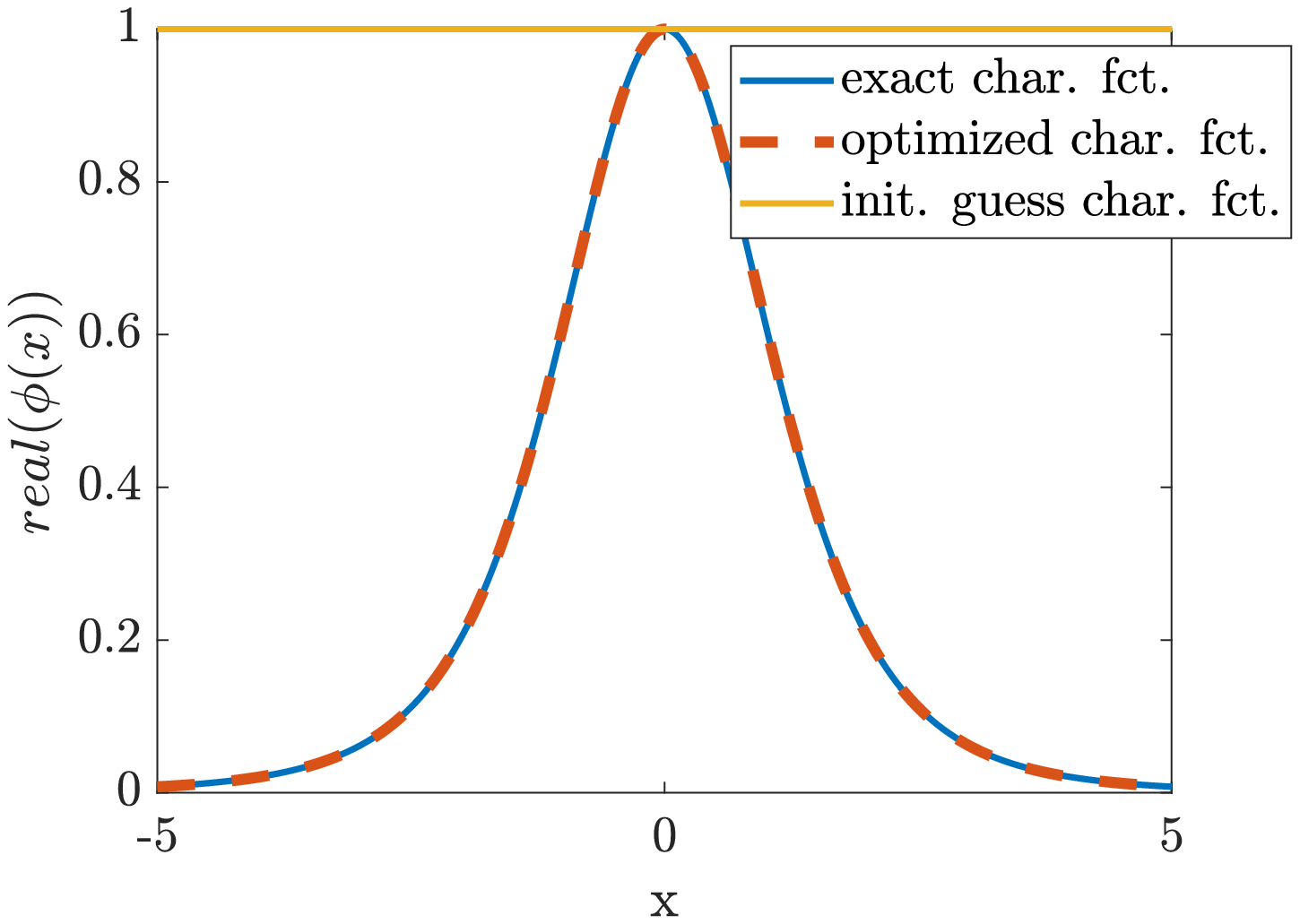}}
\subfigure{\includegraphics[scale=0.23]{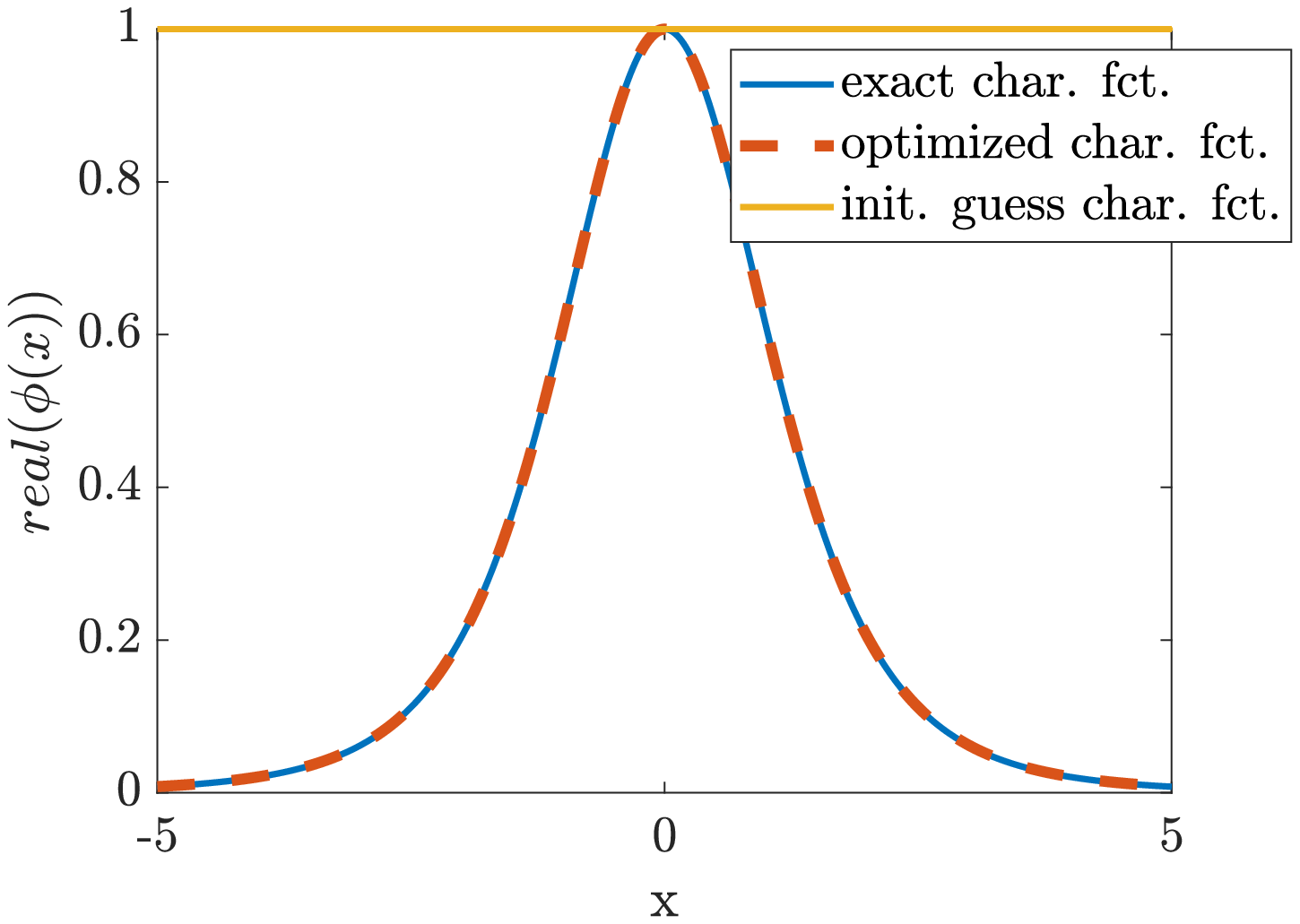}}
\subfigure{\includegraphics[scale=0.23]{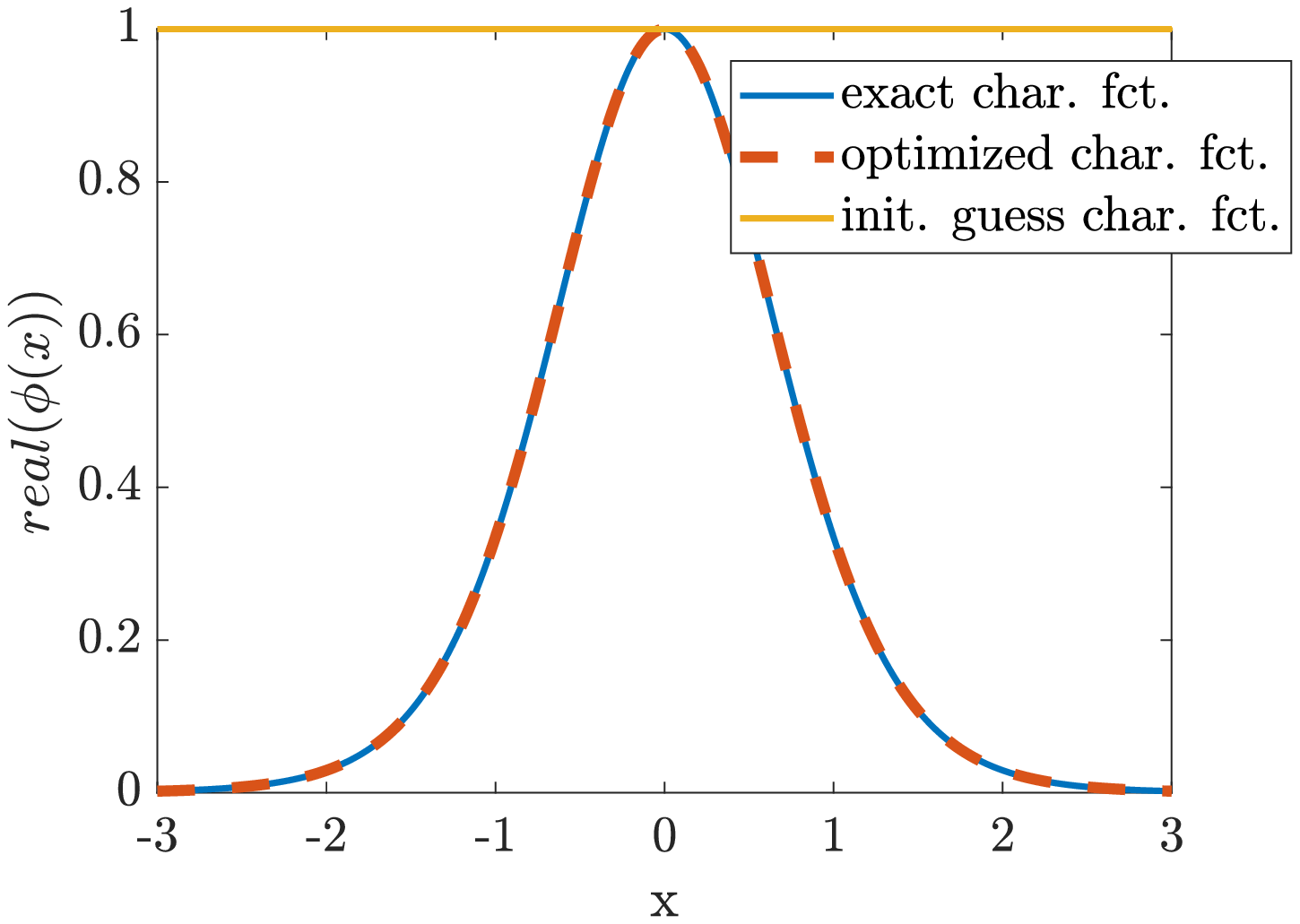}}
\caption{Real part of the characteristic functions corresponding to the initial parameters $a_G^{(1)}=0,~b_G^{(1)}=0,~a_G^{(2)}=0,~b_G^{(2)}=0,~\sigma=1$, the optimized parameters and the exact parameters of the Gamma-subordinated GRF at different points: $P_1=(0.1,0.1)$ (left), $P_2=(0.1,0.8)$ (second from left), $P_3=(0.7,0.2)$ (second from right), $P_4=(1,1)$ (right).}
\label{FIG:NumExFittingChar4}
\end{figure}

After these experiments for both approaches introduced in the beginning of Subsection \ref{SUBSEC:NumExStochFit}, we conclude that the characteristic function approach performs far better than the density approach. It is less computationally expensive, less sensitive to the parameter choice and yields better results, especially for the borderline cases.

\subsection{Pointwise Moments}

In this subsection we numerically validate Theorem \ref{TH:PointwiseMoments} which guarantees the existence of moments of the subordinated GRF if the GRF and the corresponding subordinators satisfy certain conditions. In our numerical examples we set $d=2$ and assume $W$ to be a Brownian sheet on $\mathbb{R}_+^2$. We use different Lévy processes with different stochastic regularity - in terms of the existence of moments - to subordinate the GRF $W$. In order to validate Theorem \ref{TH:PointwiseMoments} we use different statistical methods to verify or disprove the existence of moments of a specific order.

\subsubsection{Statistical methods to test the existence of moments of a random variable}

The existence of moments of a specific distribution is one of the most frequently formulated assumptions in statistical applications. For example, already the strong law of large numbers assumes finiteness of the first moment of the corresponding random variable. Nevertheless, in the literature only few statistical methods exist to verify or disprove the existence of moments, given a specific sample of random variables (see e.g. \cite{TheVariationOfCertaiNSpecalutivePrices,ASimpleGeneralApproachToInferenceAboutTheTailOfADistribution,MR3756232,MR3056087,Fedotenkov2014NoteOnBootstrap,Fedotenkov2013ASimpleNonparametric}) . One of the earlier methods to verify the existence of moments of a distribution was proposed in 1963 by Mandelbrot (see \cite{TheVariationOfCertaiNSpecalutivePrices} and \cite{EmpiricalPropertiesOfAssetReturnsStylizedFactsAndStatisticalIssues}). It is based on the simple observation that the estimated (sample-)moments will converge to a certain value for an increasing sample size if the theoretical moment exists. On the other side, if the theoretical moment does not exist, the estimated moment will diverge or behave unstable when the sample size increases. However, this quite intuitive method is rather heuristic and depends highly on the experience of the researcher (see also \cite{MR3056087}). Another popular direct way to investigate the existence of moments of a certain distribution is the sample-based estimation of a decay rate $\alpha$ for the corresponding density function proposed by Hill in \cite{ASimpleGeneralApproachToInferenceAboutTheTailOfADistribution}.
However, the Hill-estimator requires a parameter $k>0$ which specifies the sample values which are considered as the tail of the distribution and it turned out that the Hill-estimator is very sensitive to the choice of this parameter $k$. Further, the method makes the quite restrictive assumption that the underlying distribution is of Pareto-type (see 
\cite{MR3756232,MR3056087,Fedotenkov2014NoteOnBootstrap} and \cite{Fedotenkov2013ASimpleNonparametric}). In 2013, Fedotenkov proposed a bootstrap test for the existence of moments of a given distribution (see  \cite{MR3056087}). The test performs well for specific distributions, however, its accuracy deteriorates fast when moments of higher order are considered (see also \cite{Fedotenkov2014NoteOnBootstrap}). Recently, Ng and Yau proposed another sample-based bootstrap test for the existence of moments which outperforms the previously mentioned methods for many distributions (see \cite{MR3756232}). The test is based on a result from bootstrap asymptotic theory which states that the $m$ out of $n$ bootstrap sample mean (see \cite{BickelResampling}) converges weakly to a normal distribution. For details we refer to \cite{MR3756232}.

Based on these observations, we conduct a combination of direct moment estimation via Monte Carlo (see also Subsection \ref{SUBSEC:NumExCovFormula}) and the bootstrap test proposed by Ng and Yau to investigate the existence of (pointwise) moments of the subordinated GRF.

For our numerical examples we choose three different Lévy distributions to subordinate the Brownian sheet $W$: a Poisson distribution, a Gamma distribution and a Student-t distribution. Therefore, we use a discrete and a continuous distribution where all moments are finite and a continuous distribution, which only admits a limited number of moments and. Hence, we consider three fundamentally different situations. In all three experiments, we consider the evaluation point $\underline{x}=(x_1,x_2)=(1,1)\in \mathbb{R}_+^2$ for the subordinated GRF $L$. Note that the two-dimensional Brownian sheet satisfies Equation \eqref{EQ:TailEstGRFVar} in Theorem \ref{TH:PointwiseMoments} with $N=1$, $c_1=1$ and  $\underline{\alpha}^{(1)}=(1/2,1/2)$.

\subsubsection{Poisson-subordinated Brownian sheet}
In this example, we use Poisson($3$) processes to subordinate the two-dimensional Brownian sheet. We recall that a Poisson($\lambda$)-distributed random variable admits the density
\begin{align*}
k\mapsto e^{-\lambda} \frac{\lambda^k}{k!},  \text{ for } k\in \mathbb{N}_0.
\end{align*}
Hence, condition \eqref{EQ:TailEstSubordDiscrete} is satisfied for any $\eta_i>0$, $i=1,2$, since point evaluations of a Poisson process are Poisson distributed. Theorem \ref{TH:PointwiseMoments} implies the existence of the $p$-th moment of the evaluated field $L(1,1)$ for any $p<\infty$ (see Remark \ref{REM:PointwiseStochRegDiscrDist}).
We estimate the $p$-th moment for $p\in \{4,6,8\}$ by a Monte Carlo estimation using $M$ samples of the evaluated GRF $L(1,1)$ with $M\in \mathbb{N}$. Figure \ref{FIG:MomEstPoissSubordBS} shows the development of the MC-estimator for the $p$-th moment as a function of the number of samples. For every moment, we take $5$ independent MC-runs to validate that they converge to the same value. 
\begin{figure}[ht]
	\centering
\subfigure{\includegraphics[scale=0.18]{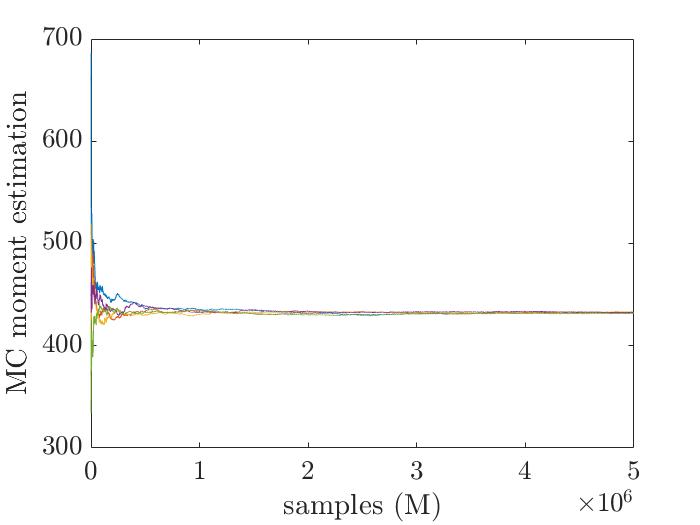}}
\subfigure{\includegraphics[scale=0.18]{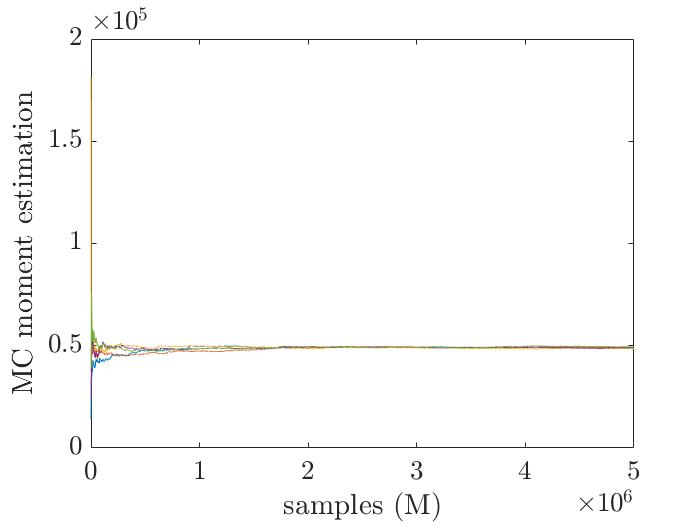}}
\subfigure{\includegraphics[scale=0.18]{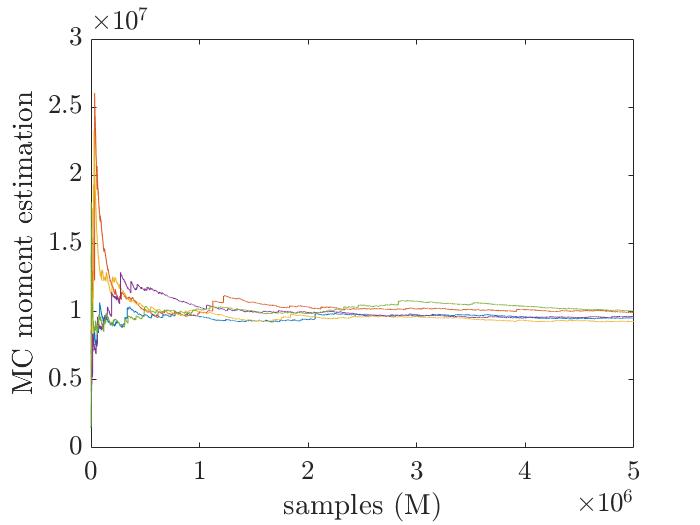}}
\caption{Five independent Monte Carlo estimations of $\mathbb{E}(|L(1,1)|^p)$ with a Poisson($3$)-subordinated Brownian sheet; $p=4$ (left), $p=6$ (middle), $p=8$ (right).}
\label{FIG:MomEstPoissSubordBS}
\end{figure}
As expected, Figure \ref{FIG:MomEstPoissSubordBS} shows a stable convergence of the MC estimator for a growing number of samples for every considered moment. Further, the different independent MC-runs converge to the same value - the theoretical $p$-th moment for $p\in \{4,6,8\}$.

In order to further confirm this result, we perform the bootstrap test (see \cite{MR3756232}). We test the existence of the $p$-th moment for $p\in\{1,2,3,4,5,6,7,8\}$ using $M = 10^7$ samples of the subordinated evaluated GRF $L(1,1)$. Hence, the null and alternative hypothesis are given by 
\begin{align*}
H_0: ~\mathbb{E}(|L(1,1)|^p)<+\infty \text{ vs. } H_1: ~\mathbb{E}(|L(1,1)|^p)=+\infty,
\end{align*}
for the different values of $p$. We choose the significance level $\alpha_s = 1\%$ and perform $10$ independent test runs. Figure \ref{FIG:BootstrapTestPoiss} shows the proportion of acceptance of the null hypothesis in the $10$ test runs as a function of the considered moment $p\in \{1,2,3,4,5,6,7,8\}$.

\begin{figure}[ht]
	\centering
	\subfigure{\includegraphics[scale=0.3]{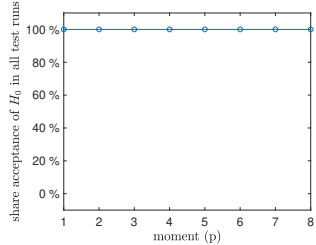}}\
\caption{Results for ten independent runs of the bootstrap test for the existence of the $p$-th moment using Poisson(3) processes to subordinate the Brownian sheet.}
\label{FIG:BootstrapTestPoiss}
\end{figure}

As we see in Figure \ref{FIG:BootstrapTestPoiss}, the bootstrap test accepts the null hypothesis $H_0$ in every test run for every considered moment $p\in\{1,2,3,4,5,6,7,8\}$. Therefore, the test results further support the expected observation that all of the considered moments exist.

\subsubsection{Gamma-subordinated Brownian sheet}
In our second numerical example we consider Gamma processes to subordinate the Brownian sheet. We recall that, for $a_G,b_G>0$, a $Gamma(a_G,b_G)$-distributed random variable admits the density function
\begin{align*}
 x\mapsto \frac{b_G^{a_G}}{\Gamma(a_G)}x^{a_G-1}\exp(-xb_G),~\text{ for }x>0,
\end{align*}
where $\Gamma(\cdot)$ denotes the Gamma function. A Gamma process $(l(t))_{t\geq 0}$ has independent Gamma distributed increments and $l(t)$ follows a $Gamma(a_G\cdot t,b_G)$-distribution for $t>0$. Therefore, condition \eqref{EQ:TailEstSubord} holds for any $\eta_i>0$, for $i=1,2$ and, hence, Theorem \eqref{TH:PointwiseMoments} again implies the existence of every moment, i.e. $\mathbb{E}(|L(1,1)|^p)<\infty$ for any $p\geq 1$. We choose $a_G=4$, $b_G=10$ and estimate the $p$-th moment of $L(1,1)$ with $p\in\{4,6,8\}$ by a Monte Carlo estimation using a growing number of samples $M\in \mathbb{N}$. Figure \ref{FIG:MomEstGammaSubordBS} shows the development of the MC-estimator for the $p$-th moment as a function of the number of samples. As in the first experiment, we take $5$ independent MC-runs to validate the convergence to a unique value. In line with our expectations, the results show a stable convergence of the MC estimations for the different moments.
\begin{figure}[ht]
	\centering
\subfigure{\includegraphics[scale=0.18]{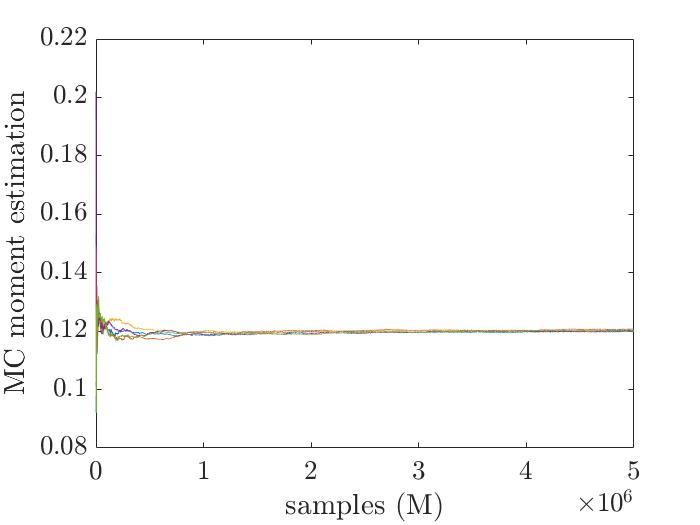}}
\subfigure{\includegraphics[scale=0.18]{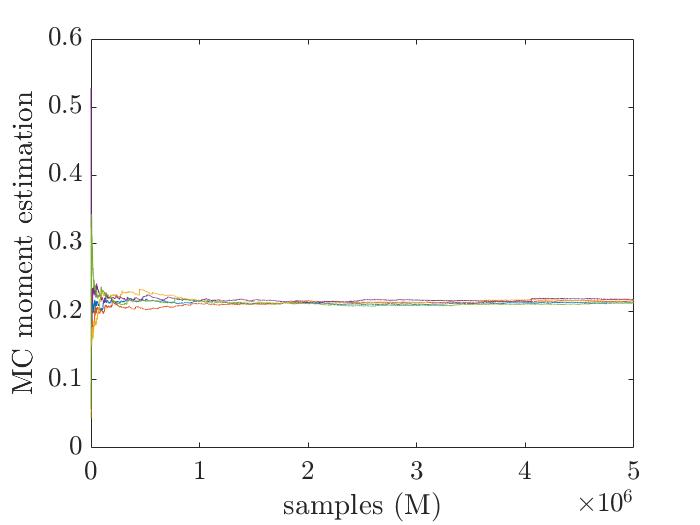}}
\subfigure{\includegraphics[scale=0.18]{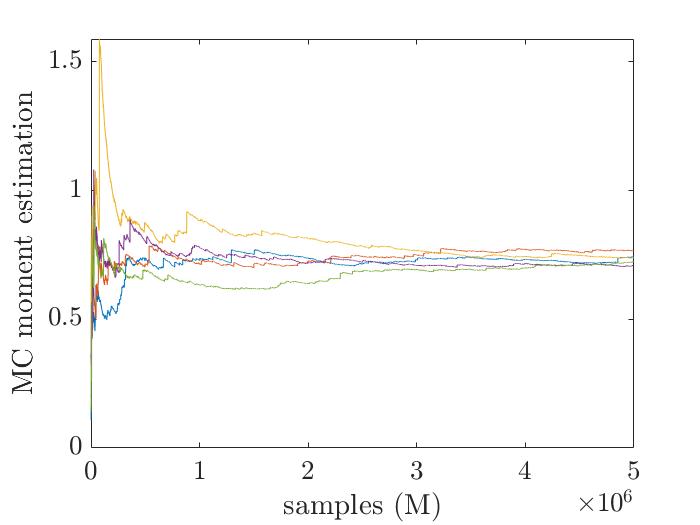}}
\caption{Five independent Monte Carlo estimations of $\mathbb{E}(|L(1,1)|^p)$ with a $Gamma(4,10)$-subordinated Brownian sheet; $p=4$ (left), $p=6$ (middle), $p=8$ (right).}
\label{FIG:MomEstGammaSubordBS}
\end{figure}

In this experiment we again perform the bootstrap test for the existence of the $p$-th moment for $p\in\{1,2,3,4,5,6,7,8\}$ using $M = 10^7$ samples of the subordinated evaluated GRF $L(1,1)$. Hence, the null and alternative hypothesis are given by 
\begin{align*}
H_0: ~\mathbb{E}(|L(1,1)|^p)<+\infty \text{ vs. } H_1: ~\mathbb{E}(|L(1,1)|^p)=+\infty,
\end{align*}
for the different values of $p$. We choose the significance level $\alpha_s = 1\%$ and perform 10 independent test runs. Figure \ref{FIG:BootstrapTestGamma} shows the proportion of acceptance of the null hypothesis in the 10 test runs as a function of the considered moment $p\in \{1,2,3,4,5,6,7,8\}$.

\begin{figure}[ht]
	\centering
	\subfigure{\includegraphics[scale=0.3]{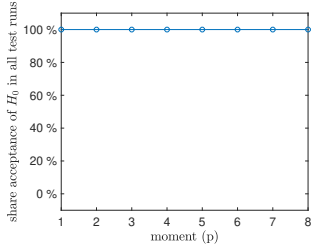}}
\caption{Results for ten independent runs of the bootstrap test for the existence of the $p$-th moment using Gamma(4,10) processes to subordinate the Brownian sheet.}
\label{FIG:BootstrapTestGamma}
\end{figure}
The test results again confirm our theoretical findings, since every test run accepts the null hypothesis for any moment $p\in \{1,2,3,4,5,6,7,8\}$, indicating that the moments exist. 

\subsubsection{Student t-subordinated Browinan Sheet}
In our last experiment we want to consider a Lévy process where the pointwise distribution only admits a finite number of moments. 
The Student's $t$-distribution with three degress of freedom admits the density function
\begin{align}\label{EQ:DensityTDist}
f_t(x) = \frac{\Gamma(2)}{\sqrt{3\pi} \Gamma(3/2)}\Big(1+\frac{x^2}{3}\Big)^{-2}, \text{ for }x\in \mathbb{R}.
\end{align}
It follows by \cite[Theorem 3]{InfiniteDivisibilityAndVarianceMixturesOfTheNormalDistribution}
that a Student-t distributed random variable with three degrees of freedom is infinitely divisible. Hence, we can define Lévy processes $l_j$, for $j=1,2$, such that $l_j(1)$ follows a Student-t distribution with three degrees of freedom for $j=1,2$ (see \cite[Theorem 7.10]{LevyProcessesAndInfinitelyDivisibleDistributions}).
Using these processes and the Brownian sheet $W$, we consider the subordinated GRF $L(x_1,x_2):=W(|l_1(x_1)|,|l_2(x_2)|)$ for $(x_1,x_2)\in [0,T_1]\times[0,T_2]$ (see Remark \ref{REM:PointwiseStochRegGeneralLavyPr}). For our numerical experiment we again evaluate the field at $(x_1,x_2)=(1,1)$. Using \eqref{EQ:DensityTDist} we obtain
\begin{align*}
f_t(x) \leq C|x|^{-4} \text{, for } x\in \mathbb{R}. 
\end{align*}
Therefore, condition \eqref{EQ:TailEstSubord} is satisfied for $\eta_i = 4$, for $i=1,2$, and it is violated for any $\eta_i >4$ (see also Remark \ref{REM:PointwiseStochRegGeneralLavyPr}). Since the Brownian sheet satisfies condition \eqref{EQ:TailEstGRFVar} with $N=1$, $c_1=1$ and $\underline{\alpha}^{(1)}=(1/2,1/2)$, Theorem \ref{TH:PointwiseMoments} yields that $\mathbb{E}(|L(1,1)|^p)<\infty$ for $p< 6$ and we expect that this boundary is sharp, i.e. we expect that $\mathbb{E}(|L(1,1)|^p)=\infty$ for $p\geq 6$.

We estimate the $p$-th moment for $p\in \{5,6,8\}$ by a MC-esimation of $M$ samples of the evaluated GRF $L(1,1)$ with $M\in \mathbb{N}$. In Figure \ref{FIG:MomEsttDistSubordBS} we see the development of the MC-estimator for the $p$-th moment as a function of the number of samples. For every moment, we take $5$ independent MC-runs. 
\begin{figure}[ht]
	\centering
\subfigure{\includegraphics[scale=0.18]{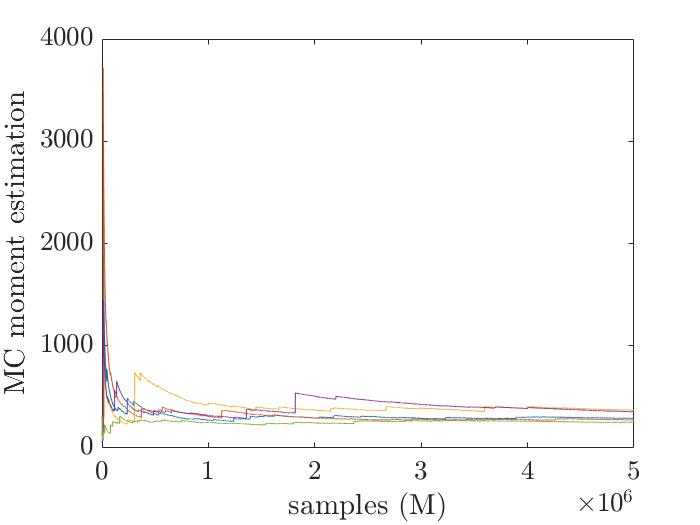}}
\subfigure{\includegraphics[scale=0.18]{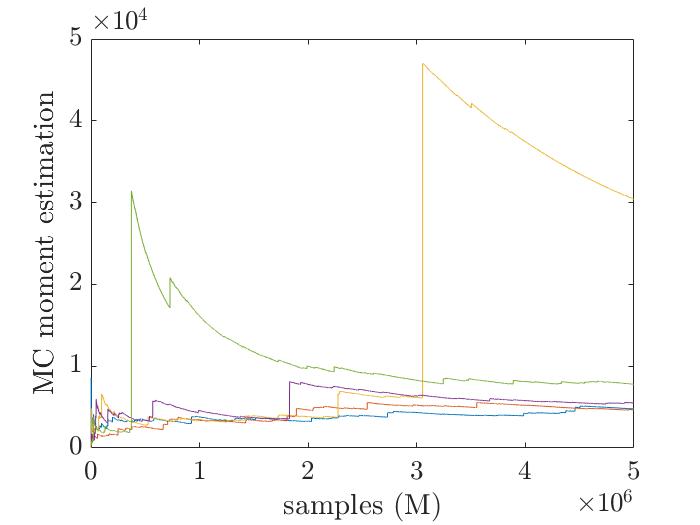}}
\subfigure{\includegraphics[scale=0.18]{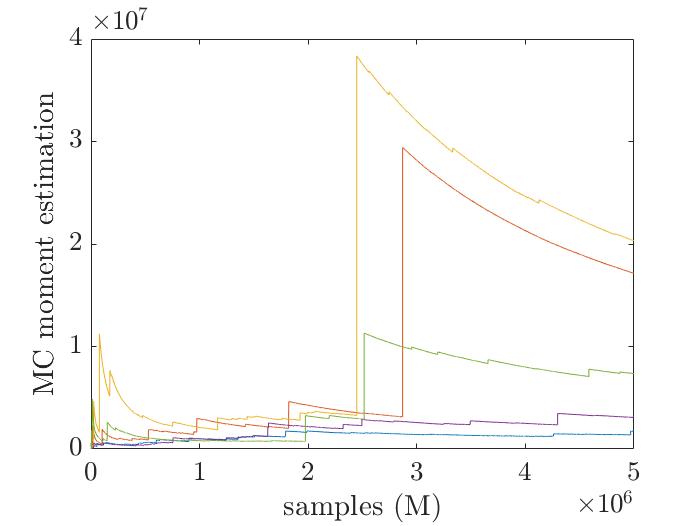}}
\caption{Five independent Monte Carlo estimations of $\mathbb{E}(|L(1,1)|^p)$ with a Student-$t$-subordinated Brownian sheet; 
$p=5$ (left), $p=6$ (middle), $p=8$ (right).}
\label{FIG:MomEsttDistSubordBS}
\end{figure}
The results indicate a convergence of the MC-estimations of the $p$-th moment for $p=5$: in this case the estimation stabilizes with growning sample size and all $5$ independent MC-estimations seem to converge to a unique value. However, for the higher moments $p=6$ and $p=8$, we see upward breakouts and instable behaviour of the corresponding MC-estimator for increasing sample sizes. Further, the $5$ independent MC-runs do not indicate a convergence to a unique value. Therefore, these experiments hit our expectations, since the $p$-th moment of the evaluated subordinated GRF $L(1,1)$ admits a $p$-th moment for $p<6$ and this boundary is sharp (see Theorem \ref{TH:PointwiseMoments}). 

We perform the bootstrap test for the existence of the $p$-th moment for \newline $p\in\{1,2,3,4,4.5,5,5.2,5.4,5.6,5.8,6,6.5,7,8\}$ using $M = 10^7$ samples of the subordinated GRF $L(1,1)$. Hence, the null and alternative hypothesis are again given by 
\begin{align*}
H_0: ~\mathbb{E}(|L(1,1)|^p)<+\infty \text{ vs. } H_1: ~\mathbb{E}(|L(1,1)|^p)=+\infty,
\end{align*}
for the different values of $p$. We choose the significance level $\alpha_s = 1\%$ and perform $10$ independent test runs. Figure \ref{FIG:BootstrapTesttDist} shows the proportion of acceptance of the null hypothesis in the $10$ test runs as a function of the considered moment $p$ and the test statistic values for the ten test runs.

\begin{figure}[ht]
	\centering
	\subfigure{\includegraphics[scale=0.4]{./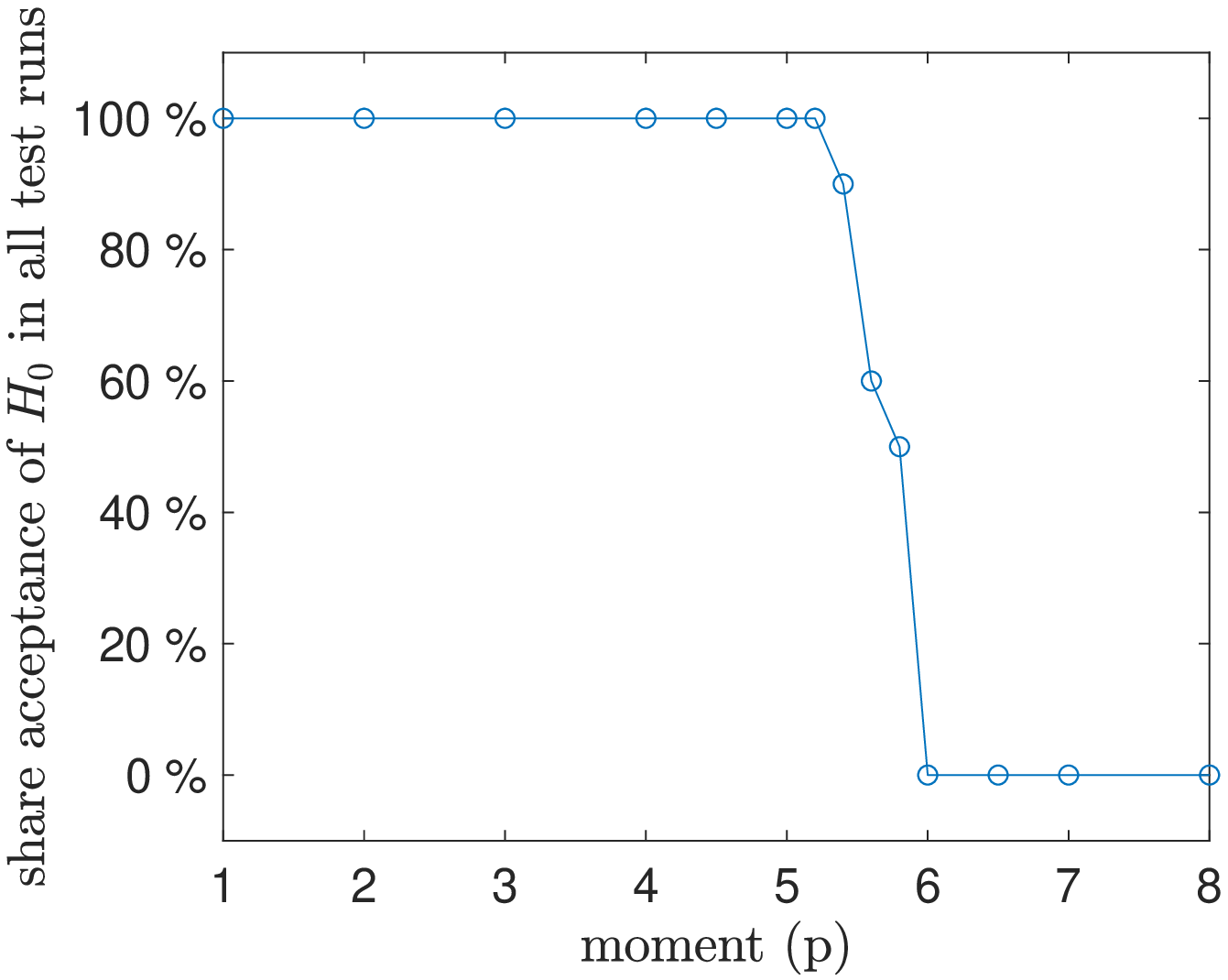}}
\subfigure{\includegraphics[scale=0.4]{./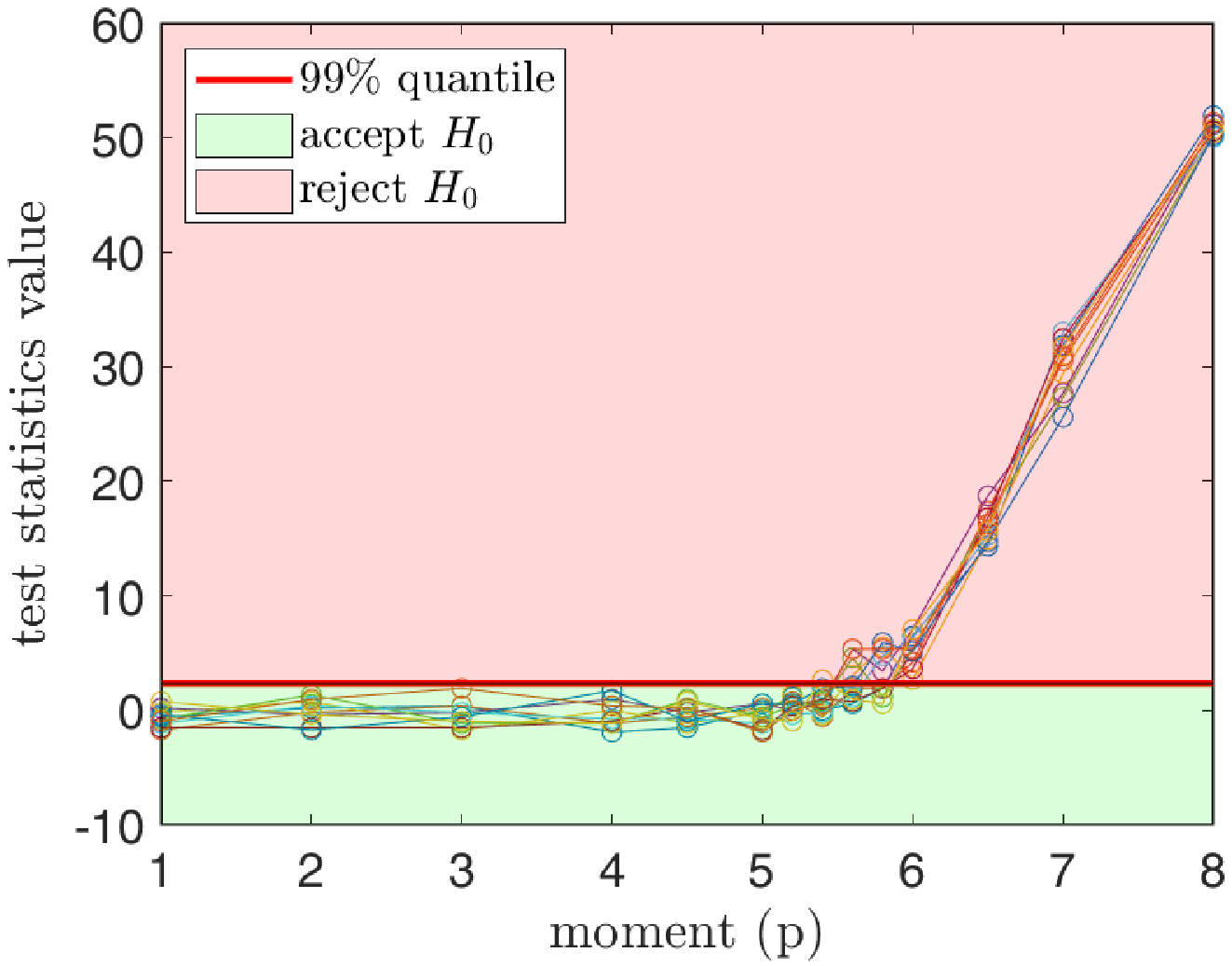}}
\caption{Bootstrap test for the existence of the $p$-th moment using Student-$t$-distributed random variables as subordinators: share acceptance of $H_0$ (left), test statistic values for the test runs (right).}
\label{FIG:BootstrapTesttDist}
\end{figure}

The results further confirm our theoretical finding: in all of the ten test runs the null hypothesis is accepted for the cases $p\in \{1,2,3,4,4.5,5\}$. Further, in all of the ten test runs $H_0$ is rejected for the cases $p\in\{6,6.5,7,8\}$, which is correct since the corresponding theoretical moments do not exist. Only the results for $p\in(5,6)$ are not perfectly accurate since the corresponding theoretical moments exist and the test rejects the null hypothesis in some cases. Nevertheless, the results are remarkable since even for $p=5.8$ the test accepts $H_0$ in 5 of the 10 testruns. This again implies that the power and accuracy of the test is impressively high in our numerical examples.

	
	\section*{Acknowledgments}
	Funded by Deutsche Forschungsgemeinschaft (DFG, German Research Foundation) under Germany's Excellence Strategy - EXC 2075 - 390740016.
	

	\bibliographystyle{siam}
	\bibliography{references}

\end{document}